\documentclass[12pt,reqno]{amsart}
\usepackage[notref,notcite]{}
\usepackage{amssymb}
\usepackage{amsmath}

\overfullrule=5pt \setlength{\topmargin}{-2cm}
\setlength{\textwidth}{16.0truecm}
\setlength{\hoffset}{-1cm}
\setlength{\textheight}{24truecm}
\overfullrule=0pt

\def\Deltav{\stackrel{ v} \Delta{}\!}

\numberwithin{equation}{section}

\newdimen\minuskern\minuskern=1pt
\def\_{\kern\minuskern{-}\kern\minuskern}
\def\+{\kern\minuskern{+}\kern\minuskern}
\def\(#1\){{\XIIpoint\rm(#1)}}

\newcommand\Deltavv{{\stackrel{v}{\kern.3mm\Delta}}}
\newcommand\nablav{{\stackrel{v}{\kern.3mm\nabla}}}
\newcommand\nablah{{\stackrel{h}{\kern.3mm\nabla}}}
\newcommand\const{{\mathrm{const}}}
\newcommand\tz{\kern.3mm{\rm{;}}\ }
\newcommand\dv{\kern.3mm{\rm{:}}\ }
\renewcommand\({$($}
\renewcommand\){$)$}
\newcommand\vop{\kern.3mm{\rm{?}}\ }

\renewcommand\emptyset\varnothing
\newcommand\Hom{\operatorname{Hom}}

\newcommand\Ker{\operatorname{Ker}}
\newcommand\Ran{\operatorname{Ran}\,}
\newcommand\supp{\operatorname{supp}}

\allowdisplaybreaks

\begin{document}

\title[Conformal Killing Tensor Fields]{On Conformal Killing Symmetric Tensor Fields\\
on Riemannian Manifolds}
\author{N.~S.~Dairbekov and V.~A.~Sharafutdinov}
\address{Kazakh-British Technical University,
Almaty, 050000 Kazakhstan}
\email{Nurlan.Dairbekov@gmail.com}
\address{Sobolev Institute of Mathematics,
     Novosibirsk, 630090 Russia}
\email{sharaf@math.nsc.ru}

\setcounter{section}{0}
\setcounter{page}{1}
\newtheorem{Theorem}{Theorem}[section]
\newtheorem{Lemma}[Theorem]{Lemma}
\newtheorem{Corollary}[Theorem]{Corollary}
\newtheorem{proposition}{Proposition}[section]
\numberwithin{equation}{section}

\begin{abstract}
 A vector field on a Riemannian manifold is called
   {\it conformal Killing}
 if it generates one-parameter group of conformal transformation. The class of
 conformal Killing
 symmetric tensor fields of an arbitrary
 rank is a natural generalization of the class of conformal Killing vector fields, and appears in different
 geometric and physical problems. We prove the statement: A trace-free
 conformal Killing
 tensor field is identically zero if
  it vanishes on some hypersurface. This statement is a basis of
  the theorem on decomposition of a symmetric tensor field
  on a compact manifold with boundary to a sum of three fields
  of special types. We also  establish triviality of the space of
  trace-free
  conformal Killing
  tensor fields on some closed manifolds.
\end{abstract}

\maketitle

\section{Introduction}

 {\it Conformal transformation}
 of a Riemannian manifold $(M,g)$ is a diffeomorphism
 $\varphi:M\rightarrow M$ such that $\varphi^*g=\lambda g$ for some
 positive function~$\lambda$ on~$M$.
   A vector field~$u$ on~$M$ is called
   {\it conformal Killing}
   if it generates one parameter transformation group
  of conformal mappings. In local coordinates, a
  conformal Killing
  vector field satisfies the equation
\begin{equation}
\frac {1} {2}({\nabla}_{\!i}u_j+{\nabla}_{\!j}u_i)=
vg_{ij}
                                                \label{(1.1)}
\end{equation}
for some scalar function~$v$ (depending on~$u$).
 Here $\nabla_i u_j$ denote the covariant derivatives of the field~$u$.

The notion of conformal Killing
 tensor fields is a generalization of the notion of conformal Killing vector fields to the case of
 higher rank tensors, and the equation that defines the first class of fields
 generalizes equation~\eqref{(1.1)}.

Given a Riemannian manifold~$(M,g)$, let
 $C^\infty(S^m\tau'_M)$ be
 the space of smooth symmetric covariant tensor field
 of rank~$m$
 on~$M$. The first order differential operator
$$
d=\sigma{\nabla}:C^\infty(S^{m-1}\tau'_M)\rightarrow
C^\infty(S^{m}\tau'_M),
$$
is called the
 {\it inner derivative.}
 Here $\nabla$ denotes the covariant derivative, and $\sigma$~is
 the symmetrization.
 The {\it divergence}
$$
\delta:C^\infty(S^{m}\tau'_M)\rightarrow C^\infty(S^{m-1}\tau'_M)
$$
is defined in local coordinates by
$ (\delta u)_{i_1\dots i_{m-1}}=
g^{jk}{\nabla}_{\!j}u_{ki_1\dots i_{m-1}} $.
The operators~$d$ and~$-\delta$ are dual to each other
 with respect to the natural $L^2$-product on the space of symmetric
 tensor fields (see~Section\,3).
 We denote by
$$
i:C^\infty(S^{m}\tau'_M)\rightarrow
C^\infty(S^{m+2}\tau'_M)
$$
the following algebraic operator of symmetric multiplication by the
 metric tensor:
$ iu=\sigma(g\otimes u) $, and the adjoint of~$i$ is denoted by
$ j:C^\infty(S^{m+2}\tau'_M)\rightarrow C^\infty(S^{m}\tau'_M) $. In local coordinates,
$ (ju)_{i_1\dots i_m}=g^{jk}u_{jki_1\dots i_m} $. The tensor field $ ju $ is called the
 {\it trace}
 of the field $u$.
 A symmetric tensor field $u$ is called
 {\it trace-free}
 if its trace is identically equal to zero:
\begin{equation}
ju=0.
                                    \label{(1.2)}
\end{equation}

A {\it Killing} tensor field is a symmetric tensor field $u$ satisfying $du=0$.
A {\it conformal Killing} tensor field is a symmetric tensor field $u$ satisfying the equation
\begin{equation}
du=iv
                                   \label{(1.3)}
\end{equation}
for some $v$. Equation~\eqref{(1.3)} is a natural
 generalization of~\eqref{(1.1)}.

Conformal Killing
 vector (covector) fields are the classic object of
 the Riemannian geometry.
 Conformal Killing symmetric tensor fields
 of higher rank
 naturally appear in various problems of physics and geometry
 (see~\cite{[Wr],[Ed], [Ge], [Mi], [Va], [Ea], [Ca]}).  There are also many papers
  where antisymmetric
  conformal Killing
  tensor fields are studied
 (conformal Killing forms) because of their role in
  Gravitation Theory and in the Maxwell equations
 (see~\cite{[St],[Jz]} and the references there).

 It should be observed that (in some sense) there are
 ``too many''
 conformal Killing
 fields of rank $m\ge 2$.
 Indeed, if $v$ is any field of rank $m-2$
  then the field $u=iv$ satisfies
 the equation $du=i(dv)$ because the operators~$i$ and~$d$ commute.
 For this reason, it makes sense to study the
 {\it trace-free conformal Killing fields}, i.e.,
 conformal Killing fields~$u$
 satisfying  equation~\eqref{(1.2)}. In some papers (for example, see~\cite{[Ea], [Ca]}),
 equation~\eqref{(1.2)} is included into the definition of a
 conformal Killing tensor field,
 but we prefer to speak on the
 {\it trace-free conformal Killing fields} in this case.

Eliminating  $v$ from equation~\eqref{(1.3)}, we get $pdu=0$,
   where $ p $ is an algebraic operator defined in Section 3 below.
   As is shown in Theorem 5.1 below, the operator $\delta pd$ is elliptic
   on the bundle of trace-free tensor fields.
   Therefore, the equation $\delta pd u=0$  implies the following statement:
   {\it a trace-free conformal Killing field is smooth}.
   Hereafter in the paper, the term ``smooth'' always means ``$C^\infty$-smooth''.

The definition of a (trace-free)
  conformal Killing field
  is invariant with respect to a conformal change of the metric in the
  following sense. If $u\in C^\infty(S^m\tau'_M)$ is a (trace-free)
  conformal Killing field
  with respect to a Riemannian metric~$g$
  then~$\lambda^m u$ is a (trace-free)
  conformal Killing field
  with respect to the metric $\lambda g$
  for any smooth positive function~$\lambda$ on~$M$.

As well known, the space of
  conformal Killing vector fields
  on~$M$ has a finite dimension if $n=\dim M\ge 3$.
  In the two-dimensional case, the space can be of infinite dimension.
  Nevertheless, in every dimension,
  a conformal Killing vector field
  is uniquely determined by its $C^\infty$-jet at any point.
  The following theorem is a generalization of the latter fact.

\begin{Theorem}\label{1.1}
Let $(M,g)$~be a connected $ n $-dimensional Riemannian manifold$,$ with $n\ge 3$.
 If a~trace-free conformal Killing
 symmetric field~$u$ of rank $m\ge 0$ satisfies the conditions
\begin{equation}
u(x_0)=0,\quad{\nabla}u(x_0)=0,\quad\dots,\quad{\nabla}^lu(x_0)=0
                                         \label{(1.4)}
\end{equation}
at some point $x_0\in M$, where $l=l(m)\leq 6m$ depends only on~$m$,
  then $u\equiv 0$. In particular, the dimension of the space of trace-free
  conformal Killing fields of rank~$m$ is finite.
  If $n=2$ then the first statement is true if~\eqref{(1.4)}
  is replaced by the following condition:
  All the derivatives of the field~$u$ vanish at the point~$x_0$.
\end{Theorem}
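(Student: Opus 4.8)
The plan is to show that the trace-free conformal Killing system is of \emph{finite type}: there is an integer $l=l(m)\le 6m$ such that the full covariant derivative $\nabla^{l+1}u$ is expressed algebraically through the lower-order jet $(u,\nabla u,\dots,\nabla^l u)$ and the curvature tensor together with its covariant derivatives. Granting this, the proof reduces to a standard prolongation-and-uniqueness argument along paths. The difficulty, and the only place where the rank $m$ and the dimension $n$ enter quantitatively, is the algebraic prolongation itself.

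First I would isolate the information carried by the two equations $du=iv$ and $ju=0$. The operator $\nabla$ sends a symmetric $m$-tensor to a tensor symmetric in its last $m$ slots, which splits into its totally symmetric part $\sigma\nabla u=du$ and a complementary ``hook'' component. The conformal Killing equation prescribes the totally symmetric part to be $iv$, so the genuinely unknown part of $\nabla u$ is the hook component. I would recover this component by applying $\delta$ and $j$ and invoking the commutation (Ricci) identities: commuting two covariant derivatives produces curvature terms, and this is exactly what lets one solve for the non-symmetric part of $\nabla u$ in terms of $u$, $v$, and curvature. Here the trace-free hypothesis is essential: without it the fields $iw$ already show the system is not of finite type, whereas $ju=0$ removes precisely this infinite-dimensional ambiguity.

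Next I would prolong. Differentiating $du=iv$ and using that $d$ commutes with $i$ gives $d(\nabla u)$ in terms of $\nabla v$, and iterating produces a hierarchy in which each new totally symmetric derivative of $u$ is tied to a derivative of $v$, while every non-symmetric component is recovered algebraically from lower-order data via the commutator/curvature identities and the trace relations. I would show that this hierarchy closes after finitely many steps, the count producing the bound $l(m)\le 6m$; the auxiliary field $v$ and its derivatives have to be carried along as additional unknowns and shown to be determined by the jet of $u$ as well. The outcome is a finite-rank bundle $E$, a direct sum of tensor bundles, and a section $U=(u,\nabla u,\dots,\nabla^l u,v,\dots)$ satisfying a closed first-order system $\nabla U=A(U)$, where $A$ is a bundle homomorphism assembled from the curvature and its covariant derivatives.

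Given the closed system $\nabla U=A(U)$, the conclusion follows at once: for any $x\in M$ choose a path from $x_0$ to $x$, along which $U$ solves a linear homogeneous ordinary differential equation, so the hypothesis \eqref{(1.4)}, which says exactly $U(x_0)=0$, forces $U\equiv0$ along the path, and connectedness of $M$ gives $u\equiv0$; the same system makes $u\mapsto U(x_0)\in E_{x_0}$ injective on the solution space, whence finite-dimensionality. The main obstacle is the prolongation, in particular solving for the hook components, which requires certain trace-type algebraic operators on symmetric tensors to be invertible, valid only for $n\ge3$. This is why $n=2$ is excluded from the finite-type statement; there I would instead invoke the ellipticity of $\delta pd$ on trace-free fields (Theorem~5.1) together with strong unique continuation for second-order elliptic operators, so that a smooth solution of $\delta pdu=0$ vanishing to infinite order at $x_0$, which is what vanishing of all derivatives means, must vanish identically, as recorded in the last sentence of the theorem.
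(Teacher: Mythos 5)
Your plan for $n\ge3$ --- prolong to a finite-type system and integrate a closed linear system $\nabla U=A(U)$ along curves --- is exactly the paper's scheme (Lemmas~\ref{3.6}, \ref{3.7}, \ref{3.8} supply the conversion of symmetrized derivatives into full ones and the ODE argument along paths), but two of your steps fail as stated. First, the hook component of $\nabla u$ is \emph{not} algebraically recoverable from $u$, $v$ and curvature at first order: already for $m=1$ a rotation field on $\mathbb{R}^n$ ($u=Ax$ with $A$ antisymmetric) has $u(0)=0$ and $v=0$, yet $\nabla_{[i}u_{j]}(0)=A\ne0$, so the antisymmetric part of $\nabla u$ is a free initial datum; the Ricci identities determine only the \emph{derivatives} of the hook components, which is precisely why further prolongation is needed and why $l(1)=2$ rather than $0$. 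Second, and decisively, your sentence asserting that ``the hierarchy closes after finitely many steps, the count producing the bound $l(m)\le 6m$'' is the entire content of the theorem and is given no mechanism. In the paper this closure is the work of Sections 7--9: one first eliminates $u$, obtaining $(n+2m-4)\,d^{m+1}v=i\bigl((m-1)d^{m}\delta v-d^{m-1}\Delta v\bigr)\pmod{\nabla^{m}u}$ (Lemma~\ref{7.1}; the factor $n+2m-4$ is precisely where $n\ge3$ enters, and it vanishes for $n=2$, $m=1$); then come the commutation formulas for powers of $d$, $\delta$, $\Delta$ (Lemmas~\ref{6.3}, \ref{6.4}), the one-parameter family of identities of Lemma~\ref{7.2} together with the nonvanishing of the extreme coefficients $a_{p_1},a_{p_2}$, and a delicate double induction culminating in $d^{5m-1}v=0$ and $d^{5m-1}u=0$ modulo $\nabla^{5m-2}$-terms, to which Lemma~\ref{3.8} is finally applied. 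Your proposal reduces the theorem to exactly this unproved combinatorial core, so neither the finiteness of type nor the bound $6m$ is actually established.

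For $n=2$ you take a genuinely different route from the paper, and there is a gap here as well: $\delta pd$ on $\Ker j$ is an elliptic \emph{system}, and strong unique continuation is not a general property of second-order elliptic systems --- the standard Aronszajn-type results require a scalar principal part, and unique continuation can fail for systems without it. Your argument is repairable: the computation in the proof of Lemma~\ref{5.2} gives $\langle j_\xi p i_\xi f,f\rangle=\frac1{m+1}\bigl(|\xi|^2|f|^2+m\,\frac{n+2m-4}{n+2m-2}\,|j_\xi f|^2\bigr)$ for $f\in\Ker j_m$, and when $n=2$ one has $|j_\xi f|^2=\frac12|\xi|^2|f|^2$ on the rank-two bundle $\Ker j_m$ for $m\ge2$ (spanned by $\mathrm{Re}\,\zeta^m$ and $\mathrm{Im}\,\zeta^m$ with $\zeta$ isotropic), while the $j_\xi$-term drops out for $m\le 1$; hence $\sigma_2(\delta pd)=-\frac12|\xi|^2E$, so $\delta pd=\frac12\Delta+(\text{first order})$ on trace-free fields and the vector-valued Aronszajn inequality $|\Delta u|\le C(|\nabla u|+|u|)$ does yield the claimed unique continuation. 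With that supplement your two-dimensional argument is correct and arguably shorter than the paper's, which instead passes to isothermal coordinates, expands $\lambda u$ in a Fourier series in the fibre angle, and reduces the system to $\frac{\partial}{\partial\bar z}\bigl(e^{-m\mu}w\bigr)=0$, so that holomorphy of $e^{-m\mu}(a+ib)$ performs the unique continuation by hand. But as submitted, the appeal to ``strong unique continuation for second-order elliptic operators'' does not cover the system at hand, and the $n\ge3$ part remains a plan rather than a proof.
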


The theorem was first proved in~\cite{[mp]}, with
  $l(m)=\nobreak6m$. The results of~\cite{[Ea],[Ca]} provide
  the exact value $l(m)=2m$ (in the case of $m=2$, see also~\cite{[Wr]}).
  Arguments of~\cite{[Ea],[Ca]} are based on delicate facts of representation theory and the theory
  of overdetermined systems.
 At the same time, the proof given in \cite{[mp]} is quite elementary although it does not allow to obtain the exact
 value of~$l(m)$.
 For reader's convenience, we reproduce the latter proof in Sections 9 and~11.
For $n\ge3$, the scheme of the proof is as follows.
Being written in local coordinates, \eqref{(1.2)} and \eqref{(1.3)}
   constitute a linear homogeneous system of equations in components
   of the~fields $u$ and $v$, and their first order partial derivatives.
 We differentiate these equations~$l$ times and show that
 the resulting system can be solved
 with respect to all partial derivatives
 of the highest order~$l+1$. This means that the components
 of the tensors $u(t)=u\big(x(t)\big)$ and $v(t)=v\big(x(t)\big)$
 satisfy a homogeneous linear system of ordinary differential
 equations of order~$l+1$ along every smooth curve $x=x(t)$.
 Together with homogeneous initial conditions~\eqref{(1.4)},
 this yields the required result.
 In the two-dimensional case, the proof consists of reducing
 system \eqref{(1.2)}--\eqref{(1.3)} to the Cauchy-Riemann equations.

\begin{Corollary}\label{1.2}
Let $(M,g)$~be a connected Riemannian manifold of dimension $n\ge 3$.
 If tensor fields $u\in C^\infty(S^m\tau'_M)$ and
 $v\in C^\infty(S^{m-1}\tau'_M)$ $(m\ge 0)$ satisfy equation~\eqref{(1.3)} and
 initial conditions~\eqref{(1.4)} with the same $l=l(m)$ as in Theorem~$\ref{1.1}$
 then there exists a field $w\in C^\infty(S^{m-2}\tau'_M)$ such that
 $u=iw$ and $v=dw$. For $n=2$, the statement is true if~\eqref{(1.4)} is replaced by the
 following condition: All the derivatives of the field~$u$ vanish at the point~$x_0$.
\end{Corollary}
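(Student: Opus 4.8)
The plan is to reduce Corollary~\ref{1.2} to Theorem~\ref{1.1} by peeling off the trace-free part of $u$. Throughout, let $p$ denote, on each rank, the fibrewise orthogonal projection onto the trace-free fields $\Ker j$; since $j=i^*$, one has the orthogonal splitting $S^m\tau'_M=\Ker j\oplus\Ran i$, so $p$ is exactly the algebraic operator of Section~3 (it annihilates $\Ran i$, whence $pi=0$, and its range consists of trace-free fields). Because $i$ is a fibrewise injective bundle map, $ji=i^*i$ is fibrewise positive definite and $(ji)^{-1}j$ is a smooth left inverse of $i$. Applying it to $(1-p)u\in\Ran i$ produces a smooth field $w:=(ji)^{-1}ju\in C^\infty(S^{m-2}\tau'_M)$ with $(1-p)u=iw$, so that
\[
u=pu+iw .
\]
Hence it is enough to show $pu\equiv0$: then $u=iw$, and substituting into \eqref{(1.3)} gives $i(dw)=d(iw)=du=iv$, whence $v=dw$ by the injectivity of $i$.

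To prove $pu\equiv0$ I would check that the field $pu$ is a trace-free conformal Killing field satisfying the initial conditions~\eqref{(1.4)}, and then quote Theorem~\ref{1.1}. Tracelessness is immediate, as $p$ projects onto $\Ker j$. For the conformal Killing property, recall that a symmetric field $\varphi$ satisfies \eqref{(1.3)} (for some $v$) if and only if $pd\varphi=0$. Using the displayed splitting, the commutation $[i,d]=0$, and $pi=0$, I compute
\[
pd(pu)=pd(u-iw)=pdu-pi(dw)=pdu ,
\]
and $pdu=0$ because $u$ itself satisfies \eqref{(1.3)}; thus $pd(pu)=0$ and $pu$ is conformal Killing. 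Finally, $p$ is assembled algebraically from the parallel tensor $g$, so it commutes with covariant differentiation, $\nabla^k(pu)=p\,\nabla^k u$ for all $k$; consequently the vanishing conditions~\eqref{(1.4)} for $u$ are inherited by $pu$. Theorem~\ref{1.1}, applied with the same $l=l(m)$, now forces $pu\equiv0$, which finishes the case $n\ge3$.

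For $n=2$ the computation is identical: $pu$ is a trace-free conformal Killing field all of whose derivatives vanish at $x_0$, so the two-dimensional version of Theorem~\ref{1.1} again yields $pu\equiv0$ and hence $u=iw$, $v=dw$. I anticipate no real obstacle in this argument, since its entire analytic weight is carried by Theorem~\ref{1.1}; the only steps that merit attention are the identification of $p$ with the orthogonal projection onto $\Ker j$ (guaranteeing both $pi=0$ and that $pu$ is trace-free) and the smoothness of $w$, which is secured by the explicit left inverse $(ji)^{-1}j$ of the injective operator $i$.
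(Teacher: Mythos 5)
Your proposal is correct and takes essentially the same route as the paper: both arguments project $u$ onto its trace-free part $pu$, verify that $pu$ is a trace-free conformal Killing field inheriting the vanishing jet conditions at $x_0$ (using that $p$ is an algebraic, parallel function of the metric), invoke Theorem~\ref{1.1} to conclude $pu=0$, and then obtain $u=iw$ with $w=(ji)^{-1}ju$ and $v=dw$ from $id=di$ and the injectivity of $i$. The only cosmetic difference is that the paper exhibits the companion field $\tilde v=\frac{m}{n+2m-2}\,\delta(pu)$ explicitly via the commutation formula~\eqref{(3.8)}, whereas you verify $pd(pu)=pdu=0$ abstractly from $pi=0$ and $id=di$, which in effect re-derives the identity $pdp=pd$ of Lemma~\ref{3.2}.
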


 Theorem \ref{1.1} implies Corollary~\ref{1.2} by an
   algebraic trick presented in~Section~3. Theorem \ref{1.1} is also used
   in the proof of the following proposition.

\begin{Theorem}\label{1.3}
Let $(M,g)$~ be a connected Riemannian manifold of dimension at least~$2$,
 and let $\emptyset\,{\neq}\,\Gamma\,{\subset}\, M$ be a smooth
 hypersurface.
 In particular$,$ $\Gamma$ may be a relatively open subset
 of the boundary~$\partial M$. If a trace-free
  conformal Killing field~$u$
 vanishes on~$\Gamma$
 then $u\equiv 0$.
\end{Theorem}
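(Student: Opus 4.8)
The plan is to derive Theorem~\ref{1.3} from the jet--vanishing result of Theorem~\ref{1.1}. Fix a point $x_0\in\Gamma$. Because $u$ vanishes identically along $\Gamma$, every derivative of $u$ in a direction tangent to $\Gamma$ already vanishes at $x_0$, so the whole problem is to control the \emph{normal} derivatives: once I show that they all vanish on $\Gamma$, the field $u$ vanishes to infinite order at $x_0$, the hypotheses \eqref{(1.4)} hold for every $l$, and Theorem~\ref{1.1} gives $u\equiv0$ in every dimension $n\ge2$ (for $n=2$ the hypothesis of Theorem~\ref{1.1} is exactly vanishing to all orders, which is what this argument produces). I would work near $x_0$ in semigeodesic coordinates, writing $\Gamma=\{t=0\}$ with $t$ the signed distance to $\Gamma$, so that the unit normal $\nu=\partial_t$ is geodesic, $\nabla_\nu\nu=0$, along the normal geodesics.

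The heart of the matter is the inductive claim that $\nabla_\nu^{\,k}u=0$ on $\Gamma$ for all $k\ge0$; the case $k=0$ is the hypothesis. Eliminating $v$ from \eqref{(1.3)} turns the equation into $p\,du=0$, where $p$ is the orthogonal projection onto trace-free tensors, a parallel self-adjoint algebraic operator. For the inductive step I would apply $\nabla_\nu^{\,k}$ to $p\,du=0$ and restrict to $\Gamma$. Since $p$, $\sigma$ and $\nu$ are parallel along the normal geodesics, the principal part contributes $p\,\sigma(\nu\otimes\nabla_\nu^{\,k+1}u)$; every remaining term, after commuting $\nabla_\nu$ past the tangential derivatives and picking up curvature and second-fundamental-form factors, carries either a tangential derivative of some $\nabla_\nu^{\,j}u$ with $j\le k$ or a normal derivative of $u$ of order at most $k$, and hence vanishes on $\Gamma$ by the inductive hypothesis. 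Thus on $\Gamma$
\[
p\,\sigma\bigl(\nu\otimes\nabla_\nu^{\,k+1}u\bigr)=0 .
\]
Now $\nabla_\nu^{\,k+1}u$ is again trace-free, because $j$ commutes with $\nabla$ and $ju=0$, so $j\,\nabla_\nu^{\,k+1}u=\nabla_\nu^{\,k+1}(ju)=0$. Therefore it suffices to know that the algebraic map $w\mapsto p\,\sigma(\nu\otimes w)$ is injective on trace-free symmetric tensors, and the induction closes with $\nabla_\nu^{\,k+1}u=0$ on $\Gamma$.

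This injectivity is exactly the place where trace-freeness is indispensable, and I regard it as the crux. I would obtain it from the ellipticity of $\delta p d$ on the bundle of trace-free tensor fields (Theorem~5.1). Indeed, $d$ and $-\delta$ are formally adjoint and $p$ is a self-adjoint projection, so the principal symbol of $\delta p d$ in the conormal direction $\nu$ equals $-A^{*}A$, where $A(w)=p\,\sigma(\nu\otimes w)$; invertibility of this symbol on trace-free tensors---the content of Theorem~5.1---is therefore equivalent to injectivity of $A$ there. Put differently, Theorem~5.1 says precisely that $\Gamma$ is non-characteristic for the trace-free conformal Killing system, which is what forces the normal derivatives to be determined recursively by the vanishing Cauchy datum $u|_\Gamma=0$.

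Assembling the steps, the induction together with the automatic vanishing of the tangential derivatives yields $\nabla^{k}u(x_0)=0$ for every $k$, so $u$ vanishes to infinite order at $x_0\in\Gamma$, and Theorem~\ref{1.1} gives $u\equiv0$. The step I expect to demand the most care is the collapse leading to the displayed identity: after differentiating $p\,du=0$ exactly $k$ times along $\nu$ one must check, term by term, that everything other than $p\,\sigma(\nu\otimes\nabla_\nu^{\,k+1}u)$ indeed carries a tangential derivative or a normal derivative of order $\le k$, which is where the curvature of $M$ and the second fundamental form of $\Gamma$ have to be accounted for.
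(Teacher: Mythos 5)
Your proposal is correct, and it reaches the key identity by a genuinely different route than the paper. The paper's proof (Lemma~\ref{4.}) performs the same induction on normal derivatives, but carries out the closing algebraic step entirely by hand: differentiating $du=iv$ in adapted coordinates, it introduces the tensors $z^{(s)},w^{(s)}$ built from the partially-normal components of $\nabla_\nu^{k+1}u$ and $\nabla_\nu^k v$, derives the recursions \eqref{(4.12)}, \eqref{(4.14)}, obtains the explicit representations \eqref{(4.17)}--\eqref{(4.18)} with positive coefficients, and concludes from nondegeneracy of $E+(m-1)\sum a(m,1,l-1)\,i^l j^l$ that all $z^{(s)},w^{(s)}$ vanish; note it must also track the derivatives of $v$ through the induction. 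You instead eliminate $v$ at the outset via $p\,du=0$ and collapse the entire combinatorial analysis into one invariant statement: injectivity of $w\mapsto p\,i_\nu w$ on $\Ker j$, i.e., the fact that $\Gamma$ is non-characteristic for the trace-free conformal Killing operator. That injectivity is exactly the paper's Lemma~\ref{5.2}, whose proof (via Lemma~\ref{5.3} and the commutation formulas \eqref{(2.11)}, \eqref{(5.6)}, \eqref{(5.11)}) is purely algebraic. In effect, the paper's Section~4 computation is a coordinate-level verification of the same symbol injectivity; your version buys conceptual clarity and brevity, at the price of the (routine but nontrivial) bookkeeping you flag: commuting $\nabla_\nu^k$ past $\nabla$, $\sigma$, and $p$ produces curvature and second-fundamental-form terms hitting $\nabla^{\le k}u$, and one needs the standard upgrade from ``normal derivatives up to order $k$ vanish on $\Gamma$'' to ``full derivatives $\nabla^j u|_\Gamma=0$, $j\le k$'' before those terms die.

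One caution on attribution: cite only the ellipticity/symbol part of Theorem~\ref{5.1} --- that is, Lemma~\ref{5.2} --- and not the theorem as a whole. The triviality of the kernel and cokernel of the boundary value problem \eqref{(5.3)} is proved in the paper \emph{using} Theorem~\ref{1.3}, so invoking the full Theorem~\ref{5.1} here would be circular; your argument as written only needs the invertibility of $j_\xi p i_\xi$ on $\Ker j$, which is self-contained, so the proof stands once the citation is narrowed.
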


In the case of $m=1$ and $\Gamma=\partial M$, Theorem~\ref{1.3}
  follows from~\cite[Proposition~3.3]{[Li]}.

The authors are indebted to the anonymous referee
  for verification of the following fact: Theorem~\ref{1.3} is not valid
  if, in the condition $u\bigr\vert_{\Gamma}=0$, the hypersurface~$\Gamma$
  is replaced by a submanifold of dimension less than~$\dim M-1$.
  We quote: ``The dimension of the space of trace-free
  conformal Killing fields of rank 2
  on~${\mathbb R}^3$ which vanish on a straight line, equals~10,
  i.e., in some sense, Theorem~\ref{1.3} is the best possible result.''

\begin{Corollary}\label{1.4}
 Let $(M,g)$ and $\Gamma$ satisfy the hypotheses of Theorem~$\ref{1.3}.$
 If tensor fields $u\in C^\infty(S^m\tau'_M)$ and
 $v\in C^\infty(S^{m-1}\tau'_M)$ satisfy the equation $du=iv$ and the condition $u\bigr\vert_\Gamma=0$ then there exists a field
 $w\in C^\infty(S^{m-2}\tau'_M)$ such that $u=iw$, $v=dw$, and
 $w\bigr\vert_\Gamma=0$.
\end{Corollary}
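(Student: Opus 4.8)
The plan is to reuse the algebraic decomposition of Section~3 --- the same device by which Theorem~\ref{1.1} yields Corollary~\ref{1.2} --- but now invoking Theorem~\ref{1.3} in place of Theorem~\ref{1.1} to handle the behaviour on $\Gamma$. Recall that at each point the bundle $S^m\tau'_M$ splits as $\Ker j\oplus\Ran i$; equivalently, every smooth symmetric $m$-field $u$ has a unique representation $u=u^0+iw$ with $ju^0=0$ and $w\in C^\infty(S^{m-2}\tau'_M)$. This splitting is algebraic and is determined pointwise by the metric, so $u^0$ and $w$ are automatically smooth; moreover, since $\nabla g=0$, the operators $i$ and $j$, and hence the two projectors of the splitting, commute with $\nabla$.

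First I would apply this decomposition to the given field $u$ and differentiate. Because $i$ and $d$ commute, $du=du^0+i(dw)$, and comparison with the hypothesis $du=iv$ gives $du^0=i(v-dw)$. Thus $u^0$ is trace-free and satisfies the conformal Killing equation~\eqref{(1.3)} with the field $v-dw$ in the role of $v$; that is, $u^0$ is a trace-free conformal Killing field.

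Next I would transfer the boundary condition. Since the representation $u=u^0+iw$ is pointwise and unique, the vanishing $u\bigr\vert_\Gamma=0$ forces each summand to vanish on $\Gamma$; as $i$ is injective (the metric being positive definite), this yields both $u^0\bigr\vert_\Gamma=0$ and $w\bigr\vert_\Gamma=0$. Now Theorem~\ref{1.3} applies to the trace-free conformal Killing field $u^0$: vanishing on $\Gamma$, it vanishes identically. Hence $u=iw$, and substituting into $du=iv$ gives $i(dw)=iv$, so $v=dw$ by injectivity of $i$. This is exactly the asserted field $w$, with $u=iw$, $v=dw$, and $w\bigr\vert_\Gamma=0$.

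The substantive content lies entirely in Theorem~\ref{1.3}; the remaining steps are the routine $i$--$j$ calculus of Section~3. The one point demanding care is the clean transfer of the boundary condition, which rests on the pointwise character of the splitting together with the injectivity of $i$. Finally, the degenerate ranks $m=0,1$ should be noted separately: there $u$ is automatically trace-free, so Theorem~\ref{1.3} gives $u\equiv0$ at once and the conclusion holds with $w=0$.
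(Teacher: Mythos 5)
Your proposal is correct and takes essentially the same route as the paper: both split $u$ along $S^m\tau'_M=\Ker j\oplus\mbox{\rm Ran}\,i$, note that the trace-free part $u^0=pu$ is a trace-free conformal Killing field vanishing on $\Gamma$, apply Theorem~\ref{1.3} to conclude $u^0\equiv0$, and finish with the injectivity of $i$ (giving $u=iw$, $v=dw$, $w\bigr\vert_\Gamma=0$). The only cosmetic difference is that you obtain the companion field as $v-dw$ directly from $du=du^0+i\,dw$, whereas the paper applies $p$ to $du=iv$ and uses the commutation formula \eqref{(3.8)} to exhibit it as $\tilde v=\frac{m}{n+2m-2}\,\delta\tilde u$ --- an equivalent computation.
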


 The following definition was introduced in~\cite[\S\,3]{[LPU]}.
 A Riemannian manifold with boundary is called
 {\it conformally rigid}
 if there is no nonzero conformal Killing vector field that
 vanishes on the boundary. Theorem~$\ref{1.3}$ implies
 conformal rigidity of an arbitrary connected Riemannian manifold
 with nonempty boundary. For such a compact manifold~$(M,g)$, Theorem~3.3 of~\cite{[LPU]} can be formulated as follows:
 {\it Every rank~$2$ symmetric tensor field~$f$  on~$M$
 can be uniquely represented in the form}
$$
f_{ij}=\frac {1} {2}
({\nabla}_{\!i}v_j+{\nabla}_{\!j}v_i)+
\lambda g_{ij}+{\tilde f}_{ij},\quad
v|_{\partial M}=0,\quad \mbox{tr}\,\tilde f=0,\quad
\delta \tilde f=0.
$$
We generalize this result to higher rank tensor fields.
  Given a compact~$M$, let $H^k(S^m\tau'_M)$ denote the Hilbert space of
  symmetric tensor fields of rank~$m$ whose components are
  locally square integrable
  together with their partial derivatives up to order~$k$ in an arbitrary coordinate system.

\begin{Theorem}\label{1.5}
 Let $(M,g)$~be a compact connected Riemannian manifold with nonempty boundary.
 Every symmetric tensor field $f\in H^k(S^m\tau'_M)$
 $(m\ge 0$, $k\ge 1)$ can be uniquely represented in the form
\begin{equation}\label{(1.5)}
f=dv+i\lambda+\tilde f,
\end{equation}
where $ v\in H^{k+1}(S^{m-1}\tau'_M) $ satisfies the conditions
\begin{equation}
jv=0,\quad v|_{\partial M}=0,
            \label{(1.6)}
\end{equation}
$ \lambda\in H^k(S^{m-2}\tau'_M) $, and $ \tilde f\in H^k(S^m\tau'_M) $
satisfies the conditions
\begin{equation}
\delta \tilde f=0,\quad j\tilde f=0.
            \label{(1.7)}
\end{equation}
The summands in \eqref{(1.5)} continuously depend on~$f$, i.e.$,$
 the stability estimates
\begin{equation}\label{(1.8)}
\|v\|_{H^{k+1}}\leq C\|f\|_{H^k},\quad
\|\lambda\|_{H^{k}}\leq C\|f\|_{H^k},\quad
\|\tilde f\|_{H^{k}}\leq C\|f\|_{H^k}
\end{equation}
hold with some constant~$C$ independent of~$f$.
\end{Theorem}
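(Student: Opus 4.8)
The plan is to prove Theorem~\ref{1.5} via a Hilbert space orthogonality argument, establishing \eqref{(1.5)} as an $L^2$-orthogonal direct-sum decomposition together with the stability estimates \eqref{(1.8)}. First I would identify the three summands as living in mutually orthogonal subspaces. The natural candidate is to let $\tilde f$ be the component of $f$ in the space of tensor fields that are both divergence-free and trace-free with appropriate boundary behaviour, and to obtain the potential $v$ from a boundary-value problem for the operator $\delta d$ (or its trace-free analogue) on tensor fields vanishing on $\partial M$. Concretely, I would first reduce to the trace-free setting: since $i$ and $j$ interact algebraically, one can split off the $i\lambda$ term by solving for $\lambda$ so that $f-i\lambda$ is trace-free, using that the algebraic operator $ji$ is invertible on symmetric tensors (a standard computation gives $ji = c_m\,\mathrm{Id} + i j$ with positive $c_m$, so on trace-free fields one inverts a positive operator). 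This disposes of $\lambda\in H^k(S^{m-2}\tau'_M)$ and the estimate $\|\lambda\|_{H^k}\le C\|f\|_{H^k}$ follows from elliptic regularity.

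Next, on the trace-free part I would seek $v\in H^{k+1}(S^{m-1}\tau'_M)$ with $jv=0$ and $v|_{\partial M}=0$ solving the variational problem of minimizing $\|f-i\lambda-dv\|_{L^2}$, equivalently solving the Euler--Lagrange equation $\delta d v = \delta(f-i\lambda)$ in the weak sense with the Dirichlet condition $v|_{\partial M}=0$, restricted to the trace-free subspace. The key point is that the relevant operator here is elliptic: by Theorem~5.1 the operator $\delta p d$ is elliptic on the bundle of trace-free tensor fields, and the boundary value problem $v|_{\partial M}=0$ is a well-posed elliptic boundary value problem of Dirichlet type. I would invoke the Lax--Milgram lemma together with a Poincar\'e-type inequality (valid because $v$ vanishes on the nonempty boundary) to obtain existence and the $H^1$ bound, then bootstrap via elliptic regularity up to $H^{k+1}$ to get $\|v\|_{H^{k+1}}\le C\|f\|_{H^k}$. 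Setting $\tilde f = f - i\lambda - dv$, the Euler--Lagrange equation forces $\delta\tilde f=0$, and by construction $j\tilde f=0$, giving \eqref{(1.7)}, while $\|\tilde f\|_{H^k}\le C\|f\|_{H^k}$ is immediate from the triangle inequality.

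The uniqueness of the decomposition is where Theorem~\ref{1.3} enters decisively, and this is the step I expect to be the main conceptual obstacle. Suppose $dv+i\lambda+\tilde f = 0$ with $v,\lambda,\tilde f$ satisfying \eqref{(1.6)}--\eqref{(1.7)}. Applying $j$ and using $jd - dj$ relations together with $jv=0$, $j\tilde f=0$ isolates $\lambda$ in terms of the trace of $dv$, and the algebraic invertibility of $ji$ on trace-free fields forces a relation that ultimately yields $\lambda=0$ once $v$ is controlled; then $dv = -\tilde f$, so applying $\delta$ gives $\delta d v = 0$ with $v|_{\partial M}=0$, whence $v$ is (the lift of) a conformal Killing field vanishing on the boundary. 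Here is the crux: the trace-free, divergence-free structure combined with $du = iv$-type relations makes $v$ a trace-free conformal Killing field, and Theorem~\ref{1.3} — applied with $\Gamma=\partial M$, which is nonempty — forces $v\equiv 0$, hence $\tilde f = 0$ as well. Thus uniqueness is precisely the manifestation of conformal rigidity guaranteed by Theorem~\ref{1.3}.

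The delicate technical points I anticipate are twofold. First, verifying that the Dirichlet boundary condition $v|_{\partial M}=0$ together with the constraint $jv=0$ defines a coercive, Shapiro--Lopatinskii-admissible boundary value problem for the trace-free elliptic operator, so that the full elliptic regularity machinery applies and the estimate indeed gains the expected derivative (landing $v$ in $H^{k+1}$ when $f\in H^k$). Second, in the uniqueness argument one must carefully track the commutator identities between $d,\delta,i,j$ to confirm that the weak integration-by-parts is legitimate under only $H^{k}$/$H^{k+1}$ regularity and that no uncontrolled boundary terms survive — the vanishing of $v$ on $\partial M$ is exactly what kills these boundary integrals, which is why the nonemptiness of $\partial M$ is indispensable both for the Poincar\'e inequality in existence and for the rigidity in uniqueness.
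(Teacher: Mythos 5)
Your overall strategy is the same as the paper's (reduce to a Dirichlet boundary value problem for a second-order elliptic operator on the bundle $\Ker j$, and get uniqueness from Theorem~\ref{1.3} with $\Gamma=\partial M$), but your reduction contains a step that fails. You determine $\lambda$ first, by requiring $f-i\lambda$ to be trace-free independently of $v$, and then claim that $\tilde f=f-i\lambda-dv$ satisfies $j\tilde f=0$ ``by construction.'' This is false: for $v\in H^{k+1}(S^{m-1}\tau'_M)$ with $jv=0$, formula~\eqref{(3.6)} gives $jdv=\tfrac{2}{m}\,\delta v$, which does not vanish in general, so in fact $j\tilde f=-\tfrac{2}{m}\,\delta v\neq 0$. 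For the same reason your Euler--Lagrange equation $\delta d v=\delta(f-i\lambda)$ is the wrong equation ($\delta d$ does not even preserve $\Ker j$), and the resulting $\tilde f$ would be divergence-free but not trace-free. The repair is to couple $\lambda$ to $v$: minimize $\|f-dv-i\lambda\|_{L^2}^2$ \emph{jointly} in $(\lambda,v)$. Optimizing in $\lambda$ pointwise, using the orthogonal splitting~\eqref{(2.14)}, gives $i\lambda=q(f-dv)$, i.e. $\lambda=(ji)^{-1}j(f-dv)$ --- the paper's~\eqref{(5.1)} --- so the residual is $\tilde f=p(f-dv)$, automatically trace-free, and the Euler--Lagrange equation in $v$ becomes $\delta p d\,v=\delta p f$, which is exactly the boundary value problem~\eqref{(5.3)} that the paper studies. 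The projection $p$ enters essentially; note that you cite Theorem~\ref{5.1} (ellipticity of $\delta pd$ on $\Ker j$) but then work with $\delta d$, which is inconsistent.

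Two further points. First, Lax--Milgram with a Poincar\'e inequality does not suffice for existence: the form $B(v,v)=\|pdv\|_{L^2}^2$ is not coercive merely because $v|_{\partial M}=0$; bounding $\|\nabla v\|_{L^2}$ by $\|pdv\|_{L^2}$ is a Korn-type inequality, available only through the G\aa rding inequality for the strongly elliptic system \emph{plus} triviality of the kernel --- and that kernel consists precisely of trace-free conformal Killing fields vanishing on $\partial M$, so Theorem~\ref{1.3} is needed already at the existence stage, not only for uniqueness. (The paper instead uses Fredholm theory for the elliptic boundary value problem, which forces it to prove the cokernel trivial via the explicit boundary construction of Lemma~\ref{5.4}; your variational route, once corrected as above and combined with the formal self-adjointness of $\delta pd$, would bypass Lemma~\ref{5.4}, which would be a genuine simplification.) Second, your uniqueness sketch has the steps in the wrong order: from $dv+i\lambda+\tilde f=0$ one cannot first conclude $\lambda=0$, since applying $j$ gives $\lambda=-(ji)^{-1}jdv$, which is determined by $v$ and nonzero in general. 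The correct order is: apply $p$ to get $pdv+\tilde f=0$; apply $\delta$ to get $\delta p d\,v=0$; Green's formula with $v|_{\partial M}=0$ gives $pdv=0$, so $dv\in\Ran i$ and $v$ is a trace-free conformal Killing field vanishing on $\partial M$; Theorem~\ref{1.3} gives $v\equiv 0$; then $\tilde f=-pdv=0$, and finally $i\lambda=0$ with injectivity of $i$ gives $\lambda=0$.
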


In General Relativity,
  conformal Killing tensor fields appear as
  polynomial first integrals of the equation for null geodesics~\cite{[Ge]}.
  Our interest in the conformal Killing
  tensor fields is motivated by
  the following  question from Integral Geometry.

Given a Riemannian manifold~$(M,g)$, let
$$
\Omega M=\{(x,\xi)\mid x\in M,\ \xi\in T_xM,\
|\xi|^2=g_{ij}(x)\xi^i\xi^j=1\}
$$
 denote the unit sphere bundle, and let
$ H:C^\infty(\Omega M)\rightarrow C^\infty(\Omega M) $ be the differentiation
 along the geodesic flow.
 In local coordinates,
\begin{equation}\label{(1.9)}
H=\xi^i\frac {\partial} {\partial x^i}-
\Gamma^i_{jk}(x)\xi^j\xi^k\frac {\partial} {\partial\xi^i},
\end{equation}
 where $\Gamma^i_{jk}$~are the Christoffel symbols. As is seen from~\eqref{(1.9)},
if the function~$U(x,\xi)$ polynomially depends  on~$\xi$ then~$HU$
 is also a polynomial in~$\xi$. More precisely, for
 $u\in C^\infty(S^{m-1}\tau'_M)$,
\begin{equation}\label{(1.10)}
H(u_{i_1\dots
i_{m-1}}(x)\xi^{i_1}\dots\xi^{i_{m-1}})= (du)_{i_1\dots
i_{m}}(x)\xi^{i_1}\dots\xi^{i_{m}}.
\end{equation}

 The question on validity of the
   converse statement is very important:
  {\it Is it true that every solution to the
   boundary value problem}
\begin{equation}\label{(1.11)}
 HU=v_{i_1\dots
i_{m}}(x)\xi^{i_1}\dots\xi^{i_{m}} \quad \mbox{on}\quad \Omega M,
\end{equation}
\begin{equation}\label{(1.12)}
U|_{\partial(\Omega M)}=0
\end{equation}
{\it is a homogeneous polynomial of degree~$m-1$ in~$\xi$}?
 The question is equivalent to the problem of inversion of the ray transform
 (see~\cite[Ch.\,1]{[mb]} for a detailed discussion).
 The question is open in the general case and the positive answer is obtained
 only under certain curvature conditions.

Consider the following weaker version of the latter question.
   {\it Assume a solution $U$ to the boundary value
   problem \eqref{(1.11)}--\eqref{(1.12)} to depend polynomially
   on~$\xi$. Is $U$ a restriction to~$\Omega M$ of a homogeneous polynomial
   of degree~$m-1$}?
   The question is not trivial since  polynomials of different degrees
   can have the same restriction to~$\Omega M$ in view of the
   identity $g_{ij}\xi^i\xi^j\bigr\vert_{\Omega M}=1$. The positive answer
   to this question can be easily obtained from Theorem~\ref{1.3}
   even if~\eqref{(1.12)} is replaced by the weaker condition
\begin{equation}\label{(1.13)}
U(x,\xi)|_{x\in\Gamma}=0,
\end{equation}
where $\Gamma$~ is a relatively open subset of~$\partial M$. Indeed, assume a
 solution~$U(x,\xi)$ to the problem~\eqref{(1.11)} and \eqref{(1.13)}  to be
 a homogeneous polynomial in~$\xi$ of degree $m+2k-1$
 (the case of a nonhomogeneous polynomial can be easily reduced to the considered one).
 This means the existence of
 $u\in C^\infty(S^{m+2k-1}\tau'_M)$ such that
$$
U(x,\xi)=u_{i_1\dots
i_{m+2k-1}}(x)\xi^{i_1}\dots\xi^{i_{m+2k-1}} \quad \mbox{on}\quad
\Omega M,\quad u|_\Gamma=0.
$$
By \eqref{(1.10)}, equation~\eqref{(1.11)} takes the form
\begin{equation}\label{(1.14)}
du=i^kv.
\end{equation}
 Applying Corollary~\ref{1.4}, we find
 $w\in C^\infty(S^{m+2k-3}\tau'_M)$ such that $u=iw$ and $w\bigr\vert_\Gamma=0$.
 Equation~\eqref{(1.14)} can be written in terms of~$w$ as
 $i(dw)=i(i^{k-1}v)$. Since $i$~is a monomorphism, this implies
$$
dw=i^{k-1}v.
$$
Repeating this argument by induction in~$k$, we find the
 field $\widetilde w\in C^\infty(S^{m-1}\tau'_M)$ such that
 $u=i^k\widetilde w$. This means that $U(x,\xi)\bigr\vert_{\Omega M}$
 coincides with the homogeneous polynomial
 ${\widetilde w}_{i_1\dots i_{m-1}}(x)\xi^{i_1}\cdots\xi^{i_{m-1}}$ of
 degree~$m-1$.

\bigskip

 The questions under consideration are also important in the case of closed manifolds
  (i.e., compact manifolds with no boundary).

\begin{Theorem}\label{negative}
If $(M,g)$~is a closed Riemannian manifold of dimension $n\ge 2$
 of nonpo\-si\-tive sectional curvature then every trace-free
 conformal Killing symmetric
 tensor field $u$ on~$M$ is absolutely parallel, i.e.,
  $\nabla u=0$,
 and every symmetric Killing tensor field is
 absolutely parallel.

 In addition, if~$M$ is connected and there is a point $x_0\in M$
 such that all sectional curvatures at $x_0$ are negative then there is no
 nonzero trace-free conformal Killing
 symmetric tensor field of any rank and every symmetric
 Killing tensor field is of the form~$cg^k$ for some constant~$c$.
\end{Theorem}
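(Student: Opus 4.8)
The plan is to prove the theorem by a Bochner-type argument on the unit sphere bundle $\Omega M$, using that $M$ is closed so that the geodesic flow preserves the Liouville measure and every term that is a total derivative along the flow, or a horizontal or vertical divergence, integrates to zero. To a symmetric field $u$ of rank $m$ I attach the function $U(x,\xi)=u_{i_1\dots i_m}(x)\xi^{i_1}\cdots\xi^{i_m}$ on $\Omega M$. By \eqref{(1.10)} the inner derivative corresponds to the flow derivative, so the Killing condition $du=0$ becomes $HU=0$, whereas for a conformal Killing field $du=iv$ and the identity $g_{ij}\xi^i\xi^j\bigr\vert_{\Omega M}=1$ give $HU\bigr\vert_{\Omega M}=V$, the degree $m-1$ polynomial carried by $v$. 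The trace-free condition $ju=0$ says that, fiberwise, $U$ is a spherical harmonic of the top degree $m$; I will use this to control $\nablav U$. Throughout I use that $\nablah U=0$ is equivalent to $\nabla u=0$, and the structure relation $\nablav(HU)=\nablah U+H\nablav U$ (modulo a term proportional to $HU$).

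The main tool is the Pestov identity, which after integration over the closed manifold $\Omega M$ reads
$$\|\nablav(HU)\|^2=\|H\nablav U\|^2-\int_{\Omega M}\langle R(\xi,\nablav U)\nablav U,\xi\rangle\,d\Sigma+(n-1)\|HU\|^2,$$
where $R$ is the Riemann curvature tensor. Since $\nablav U$ is vertical, hence orthogonal to $\xi$, the integrand $\langle R(\xi,\nablav U)\nablav U,\xi\rangle$ equals the sectional curvature of the plane spanned by $\xi$ and $\nablav U$ times $|\nablav U|^2$; under the hypothesis of nonpositive sectional curvature it is therefore nonpositive, so that $-\int_{\Omega M}\langle R(\xi,\nablav U)\nablav U,\xi\rangle\,d\Sigma$ contributes nonnegatively to the right-hand side.

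For a Killing field $HU=0$, so the left-hand side and the last term vanish, and the structure relation gives $\nablah U=-H\nablav U$. The identity collapses to $0=\|H\nablav U\|^2-\int_{\Omega M}\langle R(\xi,\nablav U)\nablav U,\xi\rangle\,d\Sigma$, a sum of two nonnegative quantities; hence both vanish, yielding $H\nablav U=0$, and thus $\nablah U=0$, i.e. $\nabla u=0$, together with the pointwise identity $\langle R(\xi,\nablav U)\nablav U,\xi\rangle\equiv0$. For a trace-free conformal Killing field $HU=V\ne0$ in general. Here the conformal Killing equation says exactly that the top ($\deg m+1$) fiberwise harmonic of $HU$ vanishes, so $V$ has fiber-degree at most $m-1$, while $U$ is a pure degree $m$ harmonic. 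I would expand $U$ and $V$ into spherical-harmonic components and use the eigenvalue relations for $\nablav$, $\Deltavv$, and $H$ on each fixed degree to analyze the terms $\|\nablav V\|^2$, $\|V\|^2$ and $\|H\nablav U\|^2$ entering the identity; matching these component by component should again force $\nablah U=0$ and the vanishing of the curvature term.

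It remains to sharpen the conclusion when some sectional curvatures are negative. If all sectional curvatures at a point $x_0$ are negative, the vanishing of $\langle R(\xi,\nablav U)\nablav U,\xi\rangle$ over the fiber of $x_0$ forces $\nablav U=0$ there, so $U(x_0,\cdot)$ is constant on the sphere. For a trace-free field $U(x_0,\cdot)$ is a degree $m$ spherical harmonic, which for $m\ge1$ can be constant only if it is zero; hence $u(x_0)=0$, and parallelism on the connected manifold gives $u\equiv0$. For a general Killing field, $U(x_0,\cdot)$ is a homogeneous polynomial of degree $m$ that is constant on the sphere, which forces $u(x_0)$ to be a constant multiple of a symmetric power of the metric (so $m=2k$ is even and $u(x_0)=cg^k$, with $u(x_0)=0$ when $m$ is odd); since both $u$ and $g$ are parallel, $u=cg^k$ everywhere. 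I expect the conformal Killing case to be the main obstacle: one cannot simply set $HU=0$, and since $\|\nablav V\|^2$ is comparable to $(n-1)\|V\|^2$ only through the precise harmonic content of $V$, the correct bookkeeping of the spherical-harmonic components in the Pestov identity, together with the exact sign and normalization of the curvature term, is the delicate point.
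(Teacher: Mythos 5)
Your overall architecture (lifting $u$ to $U(x,\xi)=u_{i_1\dots i_m}\xi^{i_1}\cdots\xi^{i_m}$, an integrated Pestov identity on $\Omega M$, the sign of the curvature term, and the endgame at $x_0$ via fiberwise constancy plus the parity/trace argument) is the paper's strategy, and your Killing case and your negative-curvature-at-a-point step are sound. But the heart of the theorem --- the trace-free conformal Killing case for $m\ge 2$ --- is left as a plan (``matching component by component should again force $\nablah U=0$''), and with the version of the Pestov identity you chose this plan provably cannot close. Since $ju=0$, your $U$ is a \emph{pure} fiberwise spherical harmonic of degree $m$, and the conformal Killing equation makes $V=HU$ a \emph{pure} harmonic of degree $m-1$; hence $\|\nablav V\|^2=(m-1)(n+m-3)\|V\|^2$ exactly, and your identity reduces to
\[
\|H\nablav U\|^2-\int_{\Omega M}\langle R(\xi,\nablav U)\nablav U,\xi\rangle\,d\Sigma
=\bigl[(m-1)(n+m-3)-(n-1)\bigr]\|V\|^2=(m-2)(n+m-2)\|V\|^2 ,
\]
an equality between nonnegative quantities whenever $m\ge 2$: there is no contradiction, nothing is forced to vanish, and there are no further spherical-harmonic components to ``match,'' since $U$ and $V$ each have a single component. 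In your scheme trace-freeness enters only through the degree count of $V$, and that information is quantitatively insufficient.

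The paper closes exactly this gap by using the trace-free condition \emph{inside} the identity rather than only in the spectral bound. Since $ju=0$ gives $\Deltav U=0$, the commutation $\Deltav HU=2\nablav{}^i\nablah_iU$ holds, which lets one convert the cross term $2\langle\nablah U,\nablav HU\rangle$ of the Pestov identity into $|\nablav HU|^2$ plus vertical divergences, so that after integration (with the homogeneity weight $n+2m$ and Euler's formula $\langle\xi,\nablav HU\rangle=(m+1)HU$) one obtains
\[
\int_{\Omega M}\Bigl(|\nablav HU|^2-m(n+2m)|HU|^2\Bigr)d\Sigma
=\int_{\Omega M}\Bigl(|\nablah U|^2-R_\xi(\nablav U)\Bigr)d\Sigma ,
\]
with the horizontal energy $|\nablah U|^2$ surviving on the right. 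The same spectral estimate you invoke (splitting $\nablav HU$ into its radial part $(m+1)HU\,\xi$ and the gradient of the degree-$(m-1)$ polynomial $\psi_x=HU|_{\Omega_xM}$) now bounds the left side by $-(2m+n-4)\int|HU|^2\,d\Sigma\le 0$ for all $m\ge1$, $n\ge2$, so both sides vanish, yielding $\nablah U\equiv0$ (i.e. $\nabla u=0$) and $R_\xi(\nablav U)\equiv0$ simultaneously --- precisely the two facts your endgame needs. (A minor structural remark: the paper derives the Killing statements by induction on rank, writing $u=\tilde u+iv$ with $j\tilde u=0$ and applying the trace-free result to $\tilde u$; your direct treatment of Killing fields via $HU=0$ is an acceptable alternative, but it does not substitute for the missing conformal Killing argument.)
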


  The classical theorem by Bochner-Yano states:
 there is no nontrivial conformal Killing vector field
  on a closed Riemannian manifold of negative Ricci curvature
  \cite[Theorem 2.14]{[JB]}.
  Theorem 1.6 generalizes the last statement to arbitrary rank
  tensor fields,
  however, under the stronger hypothesis:
  The requirement of negative Ricci curvature is replaced by
  the requirement of negative sectional curvature.

\begin{Theorem}\label{focal}
Let $(M,g)$~be a closed Riemannian manifold of dimension $n\ge 2$
 without conjugate points. A vector field on~$M$ is
 conformal Killing if and only if it is a Killing vector field.
 A trace-free tensor field of
 rank~$2$ is
 conformal Killing if and only if
  it is the trace-free part of some Killing field.
  In addition, if the geodesic flow has a dense orbit in~$\Omega M$
 then there are neither nontrivial
 conformal Killing vector fields nor nontrivial trace-free
 conformal Killing fields of rank~$2$.
\end{Theorem}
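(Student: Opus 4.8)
Both ``if'' directions are immediate from the commutation of $i$ and $d$: a Killing vector field satisfies \eqref{(1.3)} with $v=0$, and if $u=w-i\mu$ is the trace-free part of a Killing field $w$ of rank $2$ (so $dw=0$ and $\mu\in C^\infty(M)$ is chosen so that $ju=0$), then $du=dw-i\,d\mu=i(-d\mu)$, so $u$ is a trace-free conformal Killing field. The substance lies in the ``only if'' directions and the density addendum. I would pass to the unit sphere bundle via \eqref{(1.10)}: to $w\in C^\infty(S^r\tau'_M)$ assign $\widehat w(x,\xi)=w_{i_1\dots i_r}(x)\xi^{i_1}\cdots\xi^{i_r}$, so that $H\widehat w=\widehat{dw}$, while $\widehat{iv}=|\xi|^2\widehat v$ restricts to $\widehat v$ on $\Omega M$. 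Thus \eqref{(1.3)} becomes the kinetic equation $H\widehat u=\widehat v$ on $\Omega M$, with $\deg_\xi\widehat u=r$ and $\deg_\xi\widehat v=r-1$ ($r=1$ for vector fields, $r=2$ in the rank-two case). The target of the ``only if'' directions is to show that $\widehat u$ is a first integral of the geodesic flow modulo a pullback from the base: there are a Killing field $w$ and a function $\mu\in C^\infty(M)$ with $\widehat u=\widehat w+\pi^*\mu$. Then $\widehat v=H\widehat u=H\pi^*\mu=\widehat{d\mu}$ gives $v=d\mu$, which is exactly what both assertions require; for $r=1$ the summand $\pi^*\mu$ is even in $\xi$ while $\widehat u,\widehat w$ are odd, forcing $\mu=0$ and hence $v\equiv0$ ($u$ Killing), and for $r=2$ one has $v=d\mu$, so $u+i\mu$ is Killing and $u$ is its trace-free part.

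The mechanism producing this decomposition is the Pestov identity in the form valid without conjugate points. On $\Omega M$ with its flow-invariant Liouville measure, the structure equations relating $H$, $\nablah$, $\nablav$ and the curvature operator $\mathcal R(x,\xi)=R(\,\cdot\,,\xi)\xi$ yield, for $f\in C^\infty(\Omega M)$, an identity of the shape
\[
\|\nablav Hf\|^2=\|\nablah f\|^2-(\mathcal R\nablav f,\nablav f)+(n-1)\|Hf\|^2 .
\]
Under nonpositive curvature the curvature term has the favourable sign; applying the identity to $f=\widehat u$ (where, for $r=1$, $\nablav Hf=\nablav\pi^*v=0$) makes every summand vanish, recovering Theorem~\ref{negative} and, in particular, $v\equiv0$ for vectors. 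When $(M,g)$ merely has no conjugate points the curvature term is no longer signed, and the crucial point is that absence of conjugate points is equivalent to the existence, along every geodesic, of a bounded symmetric solution $a(x,\xi)$ of the matrix Riccati equation $Ha+a^2+\mathcal R=0$ (equivalently, nonnegativity of the index form). Replacing $\nablah$ by the modified horizontal derivative $\nablah-a\,\nablav$ absorbs $-(\mathcal R\nablav f,\nablav f)$ into a perfect square, turning the right-hand side into a sum of nonnegative terms while preserving the coefficient of $\|Hf\|^2$. For $r=1$ this forces $H\widehat u=\widehat v=0$, i.e.\ $v\equiv0$; for $r=2$ the same identity, applied after subtracting the unavoidable pullback part, forces $H(\widehat u-\pi^*\mu)=0$ for a suitable $\mu$, giving the decomposition above.

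For the density addendum I argue directly. By the ``only if'' directions the field differs from a Killing field $w$ by an element of $\Ran i$, so in both cases $\widehat w$ is a first integral: $H\widehat w=0$. If some orbit is dense in $\Omega M$, then the continuous function $\widehat w$ is constant along it, hence constant on all of $\Omega M$, say $\widehat w\equiv c$. For $r=1$ the polynomial $\widehat w(x,\cdot)$ is odd in $\xi$, so $c=0$ and $w=0$, whence $u=0$. For $r=2$ the identity $w_{ij}(x)\xi^i\xi^j=c=c\,g_{ij}(x)\xi^i\xi^j$ on each fibre gives $w=c\,g$, a pure trace, so its trace-free part $u$ vanishes.

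The main obstacle is the Riccati step: setting up the modified Pestov identity on a manifold only assumed free of conjugate points, where no pointwise curvature sign is available, and carefully controlling the sphere-bundle correction terms in the commutators $[H,\nablav]=-\nablah$ and $[H,\nablah]=\mathcal R\nablav$ so that the cross terms combine into the claimed perfect square after integration by parts (using skew-adjointness of $H$). The second delicate point, especially in the rank-two case, is converting the resulting nonnegative energy into the precise algebraic conclusion that $v$ is a gradient; the density addendum, by contrast, is elementary once the ``only if'' directions are in hand.
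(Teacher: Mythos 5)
Your algebraic skeleton is exactly the paper's: both ``if'' directions, the passage to the kinetic equation $H\widehat u=\widehat v$ on $\Omega M$, the parity argument in the vector case, the step ``$v=d\varphi\Rightarrow d(u-\varphi g)=0$, so $u$ is the trace-free part of the Killing field $u-\varphi g$'' in the rank-two case, and the dense-orbit addendum (constant first integral, odd parity for $r=1$, quadratic form constant on spheres $\Rightarrow w=cg$ for $r=2$) all appear verbatim in Section~6. The difference is the analytic core: the paper does not prove it at all, but quotes Proposition~\ref{cohomag} (Theorem~A of~\cite{[DP2]}), namely that on a closed manifold without conjugate points solvability of $HU=h(x)+\theta_i(x)\xi^i$ on $\Omega M$ forces $h=0$ and $\theta$ exact. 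You attempt to prove this inline by a Riccati-modified Pestov identity, and that is where a genuine gap sits.

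For $r=1$ your plan can be made to work: there $\nablav H\widehat u=0$, and the no-conjugate-points inequality $\int_{\Omega M}\bigl(|HW|^2-R_\xi(W)\bigr)\,d\Sigma\ge 0$ for $W=\nablav\widehat u$, obtained from a bounded Riccati solution via $|HW-aW|^2\ge 0$, kills the term $(n-1)\|H\widehat u\|_{L^2}^2$ and yields $v\equiv 0$ --- modulo the technicality (which you flag but do not resolve) that Green's Riccati solutions are only Borel-measurable in $(x,\xi)$, so one cannot literally insert $\nablah-a\nablav$ into a smooth integration-by-parts identity; $a$ may be differentiated only along the flow, where the Riccati equation applies. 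For $r=2$, however, your key sentence ``the same identity, applied after subtracting the unavoidable pullback part, forces $H(\widehat u-\pi^*\mu)=0$ for a suitable $\mu$'' is circular: the existence of $\mu$ with $v=d\mu$ is precisely the conclusion. Worse, no sign argument is available there: with $H\widehat u=v_i\xi^i$ one computes the fiberwise averages $\int_{\Omega_x}|\nablav H\widehat u|^2\,d\omega=\frac{n-1}{n}\omega_n|v|^2=(n-1)\int_{\Omega_x}|H\widehat u|^2\,d\omega$, so the integrated Pestov inequality closes as the identity $0\ge 0$ and forces nothing; extracting exactness of $v$ from this equality case is the substantive content of the cited theorem and is absent from your sketch. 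A further flaw: your displayed ``identity of the shape'' carries $\|\nablah f\|^2$ against the curvature term, but the Riccati completion of the square requires the combination $\|H\nablav f\|^2-R_\xi(\nablav f)$ (the index form of the vertical gradient along geodesics); with $\nablah f$ in that slot the perfect square cannot be formed. To close the proof you should either quote Proposition~\ref{cohomag} as the paper does, or reproduce the equality-case analysis of~\cite{[DP2]}.
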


 The natural assumption is that, under hypotheses of Theorem~\ref{focal},
 similar statements are valid for higher rank tensor fields. In the case of $\dim M=2$, this easily follows from the
 uniformization theorem,
 invariance of the definition of conformal Killing tensor fields
 with respect to a conformal change of the metric,
  and from Theorem~\ref{negative}.

The following fact is well known: If the geodesic flow has
  a dense orbit in~$\Omega M$ then
 (regardless of the dimension of $M$)
 every symmetric Killing tensor field is of
  the form~$cg^k$ with some constant~$c$ (see \cite{[Cr-Sh]}).

\bigskip

The rest of the paper is organized as follows. Section 2 contains preliminaries
   from  algebra of symmetric tensor fields. In particular,
  after deriving a commutation formula for the operators~$i$ and~$j$,
  we show that the singular decomposition of the operator~$ji$
  corresponds to the decomposition of polynomials
  in spherical harmonics.

 In~Section\,3, we introduce the differential operators~$d$ and~$\delta$
 on symmetric tensor fields, prove some commutation
 formulas for these operators, and obtain some useful
 propositions. In this section, we derive Corollary~\ref{1.2} of Theorem~\ref{1.1}.

   Sections 4 and 5 contain the proofs of Theorems~\ref{1.3} and~\ref{1.5},
  respectively.  These proofs are essentially based on Theorem~\ref{1.1}.

  In Section 6, we give the proofs of Theorems~\ref{negative} and \ref{focal}.
  It should be mentioned that this section differs from the others by the nature
  of the methods. Namely, here we use semi-basic tensor fields and the estimates
  for a solution to the kinetic equation which are based on Pestov's identity.
  We cannot present all necessary definitions here
  because of the volume limitation, so we refer the reader
  to \cite[Chapter 3]{[mb]} for details.
  This does not concern other sections
  since they are independent of Section 6.

 In Section 7, we discuss higher order
 differential operators on tensor fields paying a particular attention
 to the principle parts of operators.

 In Section 8, we introduce the Laplace operator on symmetric tensor fields and
 prove some commutation formulas for powers of the Laplacian which are
 needed to prove Theorem \ref{1.1}.

 The proof of Theorem~\ref{1.1} in the case of $\dim M\geq 3$ is
 presented in Section\,9.

 In Section 10, we derive the equations that relate the Fourier coefficients of a solution
 to the kinetic equation, to the Fourier coefficients of the right-hand side. We need these equations
 to prove Theorem \ref{1.1} in the two-dimensional case.
 These equations are also of some independent interest since they
 constitute the basis of the so-called method of spherical harmonics
 for the numerical solution of the kinetic equation and the related linear
 transport equation.
 In the literature on the method of spherical harmonics, several versions of
 the equations are presented for different particular geometries (see \cite{[CZ]}).
 However, the invariant form of the equations, as
 presented in Theorem~10.2, was probably unknown before.

 The final Section 11 contains the proof of Theorem~\ref{1.1} in
 the two-dimensional case.

\section{Algebra of symmetric tensors}

We use the standard terminology of vector bundle theory.
   For a smooth manifold~$N$, we denote the algebra of smooth real
   functions on~$N$ by $C^\infty(N)$. If $\xi=(E,\pi,N)$~ is a smooth vector bundle and
   $U\subset N$ is an open set then $C^\infty(\xi;U)$ denotes the $C^\infty(U)$-module
   of smooth sections of~$\xi$ over~$U$, and
   $C^\infty_0(\xi;U)$ denotes the submodule of compactly supported sections.
   We often reduce the notation $C^\infty(\xi;N)$ and $C^\infty_0(\xi;N)$
   to $C^\infty(\xi)$ and $C^\infty_0(\xi)$, respectively.
   We  deal here only with finite dimensional bundles with just one
   exception: Sometimes, we consider a graded vector bundle
     $\xi^*=\oplus_{m=0}^\infty\xi^m$,
   where each summand $\xi^m$ has a finite dimension.
   Such an object can be thought as a sequence of finite-dimensional
   bundles. If $\eta=\oplus_{m=0}^\infty\eta^m$ is another graded bundle
   and $A\in\Hom(\xi,\eta)$ then $A_m$ denotes the restriction of~$A$
   to~$\xi^m$.  We say $A$ has a degree~$k$ if
   $A(\xi^m)\subset\eta^{m+k}$.

Let $(M,g)$~be a smooth Riemannian manifold of dimension $n\ge 2$.
   By $\tau_M=(TM,\pi,M)$ and $\tau'_M=(T'M,\pi,M)$, we denote the tangent bundle
   and the cotangent bundle, respectively.
   We often shorten these notation to $\tau=(T,\pi,M)$ and $\tau'=(T',\pi,M)$.
   Let $\otimes^m\tau'=(\otimes^m T',\pi,M)$~be the bundle of real covariant
   tensors of rank~$m$ and let $S^m\tau'=(S^mT',\pi,M)$~be its subbundle
   consisting of the~symmetric tensors.
   There is the natural projection $\sigma\in\Hom(\otimes^m\tau',S^m\tau')$ acting
   as follows:
\begin{equation}\label{(2.1)}
\sigma(v_1\otimes\dots\otimes v_m)=
\frac {1} {m!}\sum\limits_{\pi\in\Pi_m}
v_{\pi(1)}\otimes\dots\otimes v_{\pi(m)},
\end{equation}
where $ \Pi_m $ is the group of permutations of the set $\{1,\dots,m\}$.
 Note that $S^1\tau'{=}\nobreak\tau'$ and $S^0\tau'=M\times\mathbb R$.
 It is convenient to assume
 $\otimes^m\tau'=S^m\tau'=0$ for $m<0$.
 For a point $x\in M$, let $T_x$, $T'_x$, $\otimes^mT'_x$, and $S^mT'_x$
 be the fibers over~$x$ of the corresponding bundles.

For $ u\in S^mT'_x $ and $ v\in S^lT'_x $, the {\it symmetric product} is defined by
$ uv=\sigma(u\otimes v) $. Thus,
$ S^*\tau'=\oplus_{m=0}^\infty S^m\tau' $ becomes a bundle of commutative graded algebras.

We will extensively use the coordinate representation of tensors.
  If $(x^1,\dots,x^n)$~is a local coordinate system in the domain
  $U\subset M$ then every tensor field $u\in C^\infty(\otimes^m\tau';U)$
  is uniquely represented in the form
\begin{equation}\label{(2.2)}
u=u_{i_1\dots i_{m}}dx^{i_1}\otimes\dots\otimes dx^{i_m},
\end{equation}
where the functions $u_{i_1\dots i_m}\in C^\infty(U)$ are the
  {\it covariant components}
  of the field~$u$ in this coordinate system.
  In~\eqref{(2.2)} and below, we use the Einstein rule:
  The summation from 1 to~$n$ is assumed over an index repeated in the multivariate subscript and superscript of a monomial.
  Assuming the choice of a coordinate system to be clear from the context, we reduce formula
  \eqref{(2.2)} to
\begin{equation}\label{(2.3)}
u=(u_{i_1\dots i_{m}}).
\end{equation}
For $x\in U$ and $u\in\otimes^mT'_x$, formulas~\eqref{(2.2)}
  and~\eqref{(2.3)} also make sense but the components
   are real numbers in this case.
  {\it Contravariant components}
  are defined by
$$
u^{i_1\dots i_{m}}=
g^{i_1j_1}\dots g^{i_mj_m}u_{j_1\dots j_{m}},
$$
where $ (g^{ij}) $ is the inverse matrix to $ (g_{ij}) $.

A tensor $u=(u_{i_1\dots i_{m}})\in\otimes^mT'_x$ belongs
 to~$S^mT'_x$ if and only if its covariant and (or)
 contravariant components are symmetric with respect to all indices.
 We will also consider partially symmetric tensors.
 The partial symmetry of a tensor is denoted by
\begin{equation}
\mbox{sym}\ u_{i_1\dots i_{k}j_1\dots j_l}:
(i_1\dots i_{k-1})i_k(j_1\dots j_{l-1})j_l.
            \label{(2.4)}
\end{equation}
This means the tensor $(u_{i_1\dots i_kj_1\dots j_l})$ is symmetric
 in each group of indices in parentheses on the right-hand side of~\eqref{(2.4)}.
Along with the full symmetrization $\sigma$, we will use partial symmetrization operators that are defined in coordinates by
$$
\sigma(i_1\dots i_m)
u_{i_1\dots i_mj_1\dots j_l}=
\frac {1} {m!}\sum\limits_{\pi\in\Pi_m}
u_{i_{\pi(1)}\dots i_{\pi(m)}j_1\dots j_l}.
$$

\begin{Lemma}\label{2.1}
Let $m\ge 1$$,$ $p\ge 1$$,$ and $x\in M$. For every tensor
 $f\in\otimes^{2m+p}T'_x$ possessing the symmetry
\begin{equation}
\mbox{\rm sym}\ f_{i_1\dots i_mj_1\dots j_pk_1\dots k_m}:
(i_1\dots i_mj_1\dots j_p)(k_1\dots k_m),
            \label{(2.5)}
\end{equation}
 there exists a unique solution to the equation
\begin{equation}
\sigma(i_1\dots i_mj_1\dots j_p)
u_{i_1\dots i_mj_1\dots j_pk_1\dots k_m}=
f_{i_1\dots i_mj_1\dots j_pk_1\dots k_m},
            \label{(2.6)}
\end{equation}
possessing the symmetry
\begin{equation}
\mbox{\rm sym}\ u_{i_1\dots i_mj_1\dots j_pk_1\dots k_m}:
(i_1\dots i_m)(j_1\dots j_pk_1\dots k_m).
            \label{(2.7)}
\end{equation}
The solution is expressed by the formula
\begin{multline}
u_{i_1\dots i_mj_1\dots j_pk_1\dots k_m}=
\sigma(i_1\dots i_m)\sigma(j_1\dots j_pk_1\dots k_m)
\\
\times\sum\limits_{l=0}^{m}(-1)^l
{p+l-1\choose l}{m+p\choose m-l}
f_{i_1\dots i_{m-l}j_1\dots j_pk_1\dots k_mi_{m-l+1}\dots i_m},
            \label{(2.8)}
\end{multline}
where $ {k\choose l}=\frac {k!} {l!(k-l)!} $ are the binomial
 coefficients.
\end{Lemma}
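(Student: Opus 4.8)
The plan is to read equation (2.6) as a request to invert a single linear map. Let $\mathcal U\subset\otimes^{2m+p}T'_x$ be the subspace of tensors with the symmetry (2.7) (symmetric in the $m$ indices $i_1\dots i_m$ and in the $p+m$ indices $j_1\dots j_pk_1\dots k_m$), and let $\mathcal F\subset\otimes^{2m+p}T'_x$ be the subspace of tensors with the symmetry (2.5) (symmetric in $i_1\dots i_mj_1\dots j_p$ and in $k_1\dots k_m$). Then (2.6) reads $\Phi u=f$, where $\Phi=\sigma(i_1\dots i_mj_1\dots j_p)$ symmetrizes over the first $m+p$ slots. First I would check that $\Phi$ maps $\mathcal U$ into $\mathcal F$: by definition $\Phi$ symmetrizes the first $m+p$ indices, and since it acts trivially on the last $m$ slots it preserves the symmetry of $u$ in $k_1\dots k_m$. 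Next I would record the dimension count $\dim\mathcal U=\binom{n+m-1}{m}\binom{n+p+m-1}{p+m}$ and $\dim\mathcal F=\binom{n+m+p-1}{m+p}\binom{n+m-1}{m}$, which coincide. Thus $\Phi\colon\mathcal U\to\mathcal F$ is a linear map between spaces of equal finite dimension, so it suffices to exhibit a right inverse: surjectivity then forces injectivity (hence uniqueness), and the right inverse is automatically the two-sided inverse.

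The core of the proof is to verify that (2.8) supplies such a right inverse. Writing, for $0\le l\le m$,
\[
T_l(f)_{i_1\dots i_mj_1\dots j_pk_1\dots k_m}=f_{i_1\dots i_{m-l}j_1\dots j_pk_1\dots k_mi_{m-l+1}\dots i_m},
\]
so that $T_l$ moves $l$ of the indices $i$ past the block $k_1\dots k_m$, formula (2.8) reads $u=\sigma(i_1\dots i_m)\,\sigma(j_1\dots j_pk_1\dots k_m)\sum_{l=0}^m(-1)^l\binom{p+l-1}{l}\binom{m+p}{m-l}T_l(f)$. The outer symmetrizers place $u$ manifestly in $\mathcal U$, so it remains to compute $\Phi u$ and check that it equals $f$. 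Since $\sigma(i_1\dots i_mj_1\dots j_p)\,\sigma(i_1\dots i_m)=\sigma(i_1\dots i_mj_1\dots j_p)$, applying $\Phi$ collapses the first outer symmetrizer, leaving the double symmetrization $\sigma(i_1\dots i_mj_1\dots j_p)\,\sigma(j_1\dots j_pk_1\dots k_m)$ of the alternating sum. I would expand this, use the symmetry (2.5) of $f$ to merge the terms in which a $k$-index has been symmetrized into the first block, and show that all contributions in which a genuine $i$-index sits among the last $m$ slots cancel between successive values of $l$; the only survivor is $f$ itself.

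The hard part will be precisely this cancellation. After the two symmetrizations are expanded, the number of ways an index can migrate between the two blocks is governed by binomial coefficients, and matching the $l$-th and $(l\pm1)$-th terms reduces to a binomial identity (of Chu--Vandermonde type) whose balance is dictated by the factor $(-1)^l\binom{p+l-1}{l}\binom{m+p}{m-l}$ — note that these are the coefficients of $(1+x)^{-p}$ and $(1+x)^{m+p}$, whose product $(1+x)^m$ encodes exactly the required collapse. I would isolate this as a purely combinatorial identity and prove it by a generating-function (or short induction) argument, thereby concluding $\Phi u=f$. Surjectivity of $\Phi$ then holds; the dimension count upgrades it to bijectivity, so the solution of (2.6) in $\mathcal U$ is unique; and since (2.8) was shown to produce a solution, it is the solution, completing the proof.
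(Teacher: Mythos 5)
Your proposal takes essentially the same approach as the paper's own proof: the paper likewise reduces the lemma to the observation that the spaces of tensors with symmetries (2.5) and (2.7) have equal dimension, so that it suffices to verify formula (2.8) is a right inverse of the symmetrization $\sigma(i_1\dots i_mj_1\dots j_p)$, a verification the paper also omits as a straightforward calculation. Your additional generating-function remark, that the coefficients $(-1)^l\binom{p+l-1}{l}$ and $\binom{m+p}{m-l}$ come from $(1+x)^{-p}(1+x)^{m+p}=(1+x)^m$ and hence encode the Chu--Vandermonde cancellation, correctly identifies the combinatorial identity on which that omitted computation rests.
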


Since this is a purely algebraic statement, it suffices to prove it
 in the case of $M={\mathbb R}^n $. For $p=1$, the statement is proved
 in~\cite[\S\,2.4]{[mb]}. For an arbitrary~$p$, the proof is quite similar.
 The idea of the proof is as follows. Since the dimension of the space
 of tensors possessing symmetry~\eqref{(2.5)} is equal to the dimension
 of the space of tensors possessing symmetry~\eqref{(2.7)},
 it suffices to verify the equation obtained by substituting
\eqref{(2.8)} into~\eqref{(2.6)}.
 This verification can be done by straightforward calculations that are omitted.

There is a natural inner product on~$S^mT'_x$ defined in coordinates by
\begin{equation}
\langle
u,v\rangle= u_{i_m\dots i_m}v^{i_m\dots i_m}.
                                    \label{(2.9)}
\end{equation}
 We extend the product to $S^*T'_x=\oplus_{m=0}^\infty S^mT'_x$
assuming $S^mT'_x$ and $S^lT'_x$ to be orthogonal to each other for $m\neq l$.
  The product smoothly depends on~$x$. Therefore, $S^*\tau'$ obtains the
  structure of a Riemannian vector bundle. So we can introduce
 the $L^2$-product on $C^\infty_0(S^*\tau')$
 as follows:
\begin{equation}
(u,v)_{L^2}=\int\limits_{M}\langle u(x),v(x)\rangle dV(x),
            \label{(2.10)}
\end{equation}
where $ dV $ is the Riemannian volume form.

For $u\in S^*T'_x$, let $i_u:S^*T'_x\rightarrow S^*T'_x$ be
  the operator of symmetric multiplication by~$u$, i.e.,
 $i_uv=uv$, and let~$j_u$ be the adjoint operator of $i_u$.
 These operators are expressed by the formulas
$$
(i_uv)_{i_1\dots i_{m+l}}=
\sigma(i_1\dots i_{m+l})(u_{i_1\dots i_m}v_{i_{m+1}\dots i_{m+l}}),
$$
$$
(j_uv)_{i_1\dots i_{m-l}}=
v_{i_1\dots i_l}u^{i_{l-m+1}\dots i_{l}}
$$
for $ u\in S^mT'_x $ and $ v\in S^lT'_x $. The second formula makes sense only for $m\le l$.
If $ m>l $ then $ j_uv=0 $.
For $u\in C^\infty(S^*\tau')$,
 the operators $i_u,j_u\in\Hom(S^*\tau',S^*\tau')$
 are defined by
$ i_u(x)=i_{u(x)} $ and $ j_u(x)=j_{u(x)} $. The operators~$i_g$ and~$j_g $ are of a
particular importance in the present article, so we distinguish them by introducing the notation $i=i_g$
 and $j=j_g$.
 These operators were already used in Introduction.

\begin{Lemma}\label{2.2}
For $m\ge 0$ and $k\ge 1$$,$
 the following commutation formula holds
  on~$S^m\tau'$\dv
$$
ji^k=
\frac {2k(n+2m+2k-2)} {(m+2k-1)(m+2k)}i^{k-1}+
\frac {m(m-1)} {(m+2k-1)(m+2k)}i^kj.
$$
\end{Lemma}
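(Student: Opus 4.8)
The plan is to prove the case $k=1$ by a direct computation in coordinates and then obtain the general formula by induction on~$k$.

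First I would establish the commutation formula for $ji$ on $S^m\tau'$. Writing $iu=\sigma(g\otimes u)$ in covariant components, the full symmetrization of $g_{l_1l_2}u_{l_3\dots l_{m+2}}$ collapses to a sum over the $\binom{m+2}{2}$ unordered pairs of index positions carrying the two indices of~$g$, with the normalizing factor $\frac{2}{(m+1)(m+2)}$. To compute $(jiu)_{i_1\dots i_m}=g^{pq}(iu)_{pqi_1\dots i_m}$, I would split this sum into three cases according to how the contracted positions (those of $p,q$) meet the pair carrying~$g$: (i)~$g$ sits exactly on $p,q$, contributing $g^{pq}g_{pq}u=n\,u$; (ii)~$g$ shares exactly one index with $\{p,q\}$, in which case $g^{pq}g_{p\,i_a}$ produces a Kronecker delta that relabels a contracted index back into~$u$, so that each of the $2m$ such pairs contributes $u_{i_1\dots i_m}$; and (iii)~$g$ avoids both $p,q$, so that $g^{pq}$ acts as a trace on~$u$ while the surviving factor $g_{i_ai_b}$ rebuilds a symmetric product, producing $\frac{m(m-1)}{2}(iju)_{i_1\dots i_m}$ after summing over the $\binom{m}{2}$ pairs. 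Collecting the three contributions with the common prefactor yields
\[
ji=\frac{2(n+2m)}{(m+1)(m+2)}\,\mathrm{Id}+\frac{m(m-1)}{(m+1)(m+2)}\,ij\qquad\text{on }S^m\tau',
\]
which is exactly the case $k=1$.

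For the inductive step I would write $ji^{k+1}=(ji^k)\circ i$ and apply the induction hypothesis to $ji^k$ acting on $iu\in S^{m+2}\tau'$, that is, with $m$ replaced by $m+2$. Using $i^{k-1}(iu)=i^ku$ and $i^kj(iu)=i^k(jiu)$, and then substituting the already-proved case $k=1$ for $jiu$, one expresses $ji^{k+1}u$ as a combination of $i^ku$ and $i^{k+1}ju$. The coefficient of $i^{k+1}ju$ simplifies at once to $\frac{m(m-1)}{(m+2k+1)(m+2k+2)}$, while the coefficient of $i^ku$ reduces, after clearing the common denominator $(m+2k+1)(m+2k+2)$, to $\frac{2(k+1)(n+2m+2k)}{(m+2k+1)(m+2k+2)}$; this is the asserted formula with $k$ replaced by $k+1$.

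The only genuinely delicate point is the base case: keeping the combinatorial bookkeeping of the symmetrization correct, in particular obtaining the factor $2m$ in case~(ii) and the factor $\frac{m(m-1)}{2}$ in case~(iii), together with the overall normalization $\frac{2}{(m+1)(m+2)}$. Once these are pinned down, the inductive step is a routine verification of rational-function identities, so I expect no obstacle beyond careful coordinate algebra.
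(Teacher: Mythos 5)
Your proposal is correct and takes essentially the same route as the paper: Lemma~2.2 is proved there by establishing the $k=1$ case \eqref{(2.11)} via a coordinate calculation (which the paper omits and you carry out correctly, including the factors $2m$, $\tfrac{m(m-1)}{2}$, and the normalization $\tfrac{2}{(m+1)(m+2)}$) followed by induction on~$k$. Your inductive step, applying the $k$-case on $S^{m+2}\tau'$ and substituting \eqref{(2.11)} for $jiu$, checks out exactly, with the numerator identity $2k(n+2m+2k+2)+2(n+2m)=2(k+1)(n+2m+2k)$ yielding the asserted coefficients.
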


 In the case of $k=1$, the formula has the form
\begin{equation}
ji=
\frac {2(n+2m)} {(m+1)(m+2)}E+
\frac {m(m-1)} {(m+1)(m+2)}ij,
            \label{(2.11)}
\end{equation}
where $ E $ the identity operator. The latter formula is proved by a straightforward calculation in coordinates which is omitted.
The general
 case easily follows from~\eqref{(2.11)} with the help of induction in $k$.

\begin{Lemma}\label{2.3}
For an arbitrary integer $m\ge 0$$,$ the following decomposition formula holds\dv
\begin{equation}
S^m\tau'=\bigoplus\limits_{k=0}^{[m/2]}
i^k(\mbox{\rm Ker}\,j_{m-2k}),
            \label{(2.12)}
\end{equation}
where $[m/2]$~is the integer part of~$m/2$$,$ and $\Ker j_{m-2k}$~is
 the kernel of the restriction $j_{m-2k}$ of the operator~$j$
 to $S^{m-2k}\tau'$.
  Each summand of the decomposition is a subbundle in~$S^m\tau'$
  and the summands are orthogonal to each other.
  The operator~$i$ is a monomorphism and its range is related to
  decomposition~\eqref{(2.12)} by the formula
\begin{equation}
\mbox{\rm Ran}\,i_{m-2}=\bigoplus\limits_{k=1}^{[m/2]}
i^k(\mbox{\rm Ker}\,j_{m-2k}).
            \label{(2.13)}
\end{equation}
The product  $ji$ is a self-adjoint and positive definite operator.
 Each summand of~\eqref{(2.12)} is a proper subspace
 of the operator~$ji$ associated with the eigenvalue
$$
\lambda_k=\frac {2(k+1)(n+2m-2k)} {(m+1)(m+2)}.
$$
The dimension of the summand equals $\alpha(m-2k)-\alpha(m-2k-2)$,
 where $\alpha(m)=\alpha(m,n)={n+m-1\choose m}$ is the
 dimension of~$S^m\tau'$.
\end{Lemma}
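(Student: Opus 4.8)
The plan is to work fiberwise at a point $x\in M$, treating everything as statements about the finite-dimensional Euclidean space $(S^*T'_x,\langle\cdot,\cdot\rangle)$ of \eqref{(2.9)}, and only at the end to upgrade the fiberwise orthogonal splitting to a splitting into smooth subbundles: since the eigenvalues $\lambda_k$ depend only on $n,m,k$ they are constant in $x$, so the spectral projections of $ji$ depend smoothly on $x$ and the eigenbundles are smooth. Throughout I use that $j=i^*$ with respect to \eqref{(2.9)}, whence $ji$ is self-adjoint and $\langle ji\,v,v\rangle=\langle iv,iv\rangle=\|iv\|^2\ge 0$; in particular $ji$ is positive semidefinite and $\Ker(ji)=\Ker i$.

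The organizing idea is to identify the summands of \eqref{(2.12)} as the eigenspaces of the self-adjoint operator $ji$ on $S^m\tau'$. First I would compute the action of $ji$ on a field of the form $i^kw$ with $w\in\Ker j_{m-2k}$: applying Lemma~\ref{2.2} on $S^{m-2k}\tau'$ with exponent $k+1$ and using $jw=0$ to kill the trace term, the coefficient simplifies exactly to $ji(i^kw)=\lambda_k\,i^kw$ with $\lambda_k=\frac{2(k+1)(n+2m-2k)}{(m+1)(m+2)}$. Hence each $V_k:=i^k(\Ker j_{m-2k})$ lies in the $\lambda_k$-eigenspace of $ji$. A short monotonicity check on $f(k)=(k+1)(n+2m-2k)$ shows $f$ is strictly increasing on $[0,m/2]$ when $n\ge 2$ (its critical point is $(n+2m-2)/4\ge m/2$), so the $\lambda_k$ are positive and pairwise distinct; since $ji$ is self-adjoint, the $V_k$ are then pairwise orthogonal and their sum is direct.

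The crux is the injectivity of $i$ (equivalently, the positive definiteness of $ji$), which is what makes the dimension bookkeeping close up. For $m\le 1$ this is immediate, since Lemma~\ref{2.2} gives $ji=\frac{2(n+2m)}{(m+1)(m+2)}E$ on $S^m\tau'$, a positive multiple of the identity. For $m\ge 2$, suppose $iv=0$; then $ji\,v=0$, and the $k=1$ form \eqref{(2.11)} gives $v=-\frac{m(m-1)}{2(n+2m)}\,i(jv)$. Applying $j$ and writing $w=jv$ produces $ji\,w=-\frac{2(n+2m)}{m(m-1)}\,w$; but the right-hand coefficient is negative while $\langle ji\,w,w\rangle=\|iw\|^2\ge 0$, forcing $w=0$ and hence $v=0$. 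Thus $i$ is a monomorphism, and dually $j=i^*$ is an epimorphism, so $\dim\Ker j_{m-2k}=\alpha(m-2k)-\alpha(m-2k-2)$ and $\dim V_k=\dim\Ker j_{m-2k}$.

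Finally I would close the decomposition by a dimension count: summing $\dim V_k$ over $k=0,\dots,[m/2]$ telescopes to $\alpha(m)=\dim S^m\tau'$, so the pairwise-orthogonal direct sum $\bigoplus_k V_k$ exhausts $S^m\tau'$, giving \eqref{(2.12)} and confirming that each $V_k$ is the full $\lambda_k$-eigenspace of $ji$. Formula \eqref{(2.13)} then follows from the automatic orthogonal splitting $S^m\tau'=\Ker j_m\oplus\Ran i_{m-2}$ (valid since $\Ran i_{m-2}=\Ran(j^*)=(\Ker j_m)^\perp$) together with $V_0=\Ker j_m$: intersecting with $V_0^\perp$ gives $\Ran i_{m-2}=\bigoplus_{k=1}^{[m/2]}V_k$. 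I expect the only genuinely delicate point to be the injectivity argument of the third paragraph; once positive definiteness of $ji$ is in hand, orthogonality is forced by self-adjointness, and the remaining claims reduce to the telescoping count and the smooth-dependence remark.
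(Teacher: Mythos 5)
Your proposal is correct and follows essentially the same route as the paper: identify each summand $i^k(\Ker j_{m-2k})$ as the $\lambda_k$-eigenspace of $ji$ via Lemma~\ref{2.2}, invoke self-adjointness and distinctness of the $\lambda_k$ for orthogonality, and close with the telescoping dimension count against $\alpha(m)$. The only local difference is your injectivity step: where you argue by contradiction (deriving $ji\,w=-\tfrac{2(n+2m)}{m(m-1)}\,w$ from $iv=0$), the paper gets positive definiteness of $ji$ in one line from \eqref{(2.11)}, since $ij$ is nonnegative as the product of mutually adjoint operators; your extra checks (strict monotonicity of $\lambda_k$, the derivation of \eqref{(2.13)}, smoothness of the eigenbundles) merely make explicit what the paper leaves implicit.
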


\begin{proof}
 The operator $ij$ is self-adjoint and nonnegative since it is the product
 of two operators that are dual to each other.
 Therefore,~\eqref{(2.11)} implies that $ji$~is a positive self-adjoint operator.
 Hence, $i$~is a monomorphism and the orthogonal decomposition
\begin{equation}
S^*\tau'=\mbox{\rm Ker}\,j\oplus\mbox{Ran}\,i
            \label{(2.14)}
\end{equation}
 holds with the summands on the right-hand side being sub-bundles of the left-hand side.

Let $u\in\Ker j_{m-2k}$. By Lemma~\ref{2.2}, we get
   $(ji)(i^ku)=\lambda_ki^ku$. Therefore, each summand in~\eqref{(2.12)}
   is the
   eigen-subspace of the operator~$ji$ associated
  with the eigenvalue~$\lambda_k$. Since all~$\lambda_k$ are different,
  all summands are orthogonal to each other.
 The injectivity of  $i$~ and equation~\eqref{(2.14)} imply
$$
\mbox{dim}\,[i^k(\mbox{\rm Ker}\,j_{m-2k}]=
\mbox{dim}\,(\mbox{\rm Ker}\,j_{m-2k})=
\alpha(m-2k)-\alpha(m-2k-2).
$$
Equality~\eqref{(2.12)} is now proved by comparing
 dimensions of spaces on both sides of the equality.
\end{proof}

\bigskip

 Let $p\in\Hom(S^*\tau',\Ker j)$ and
  $q\in\Hom(S^*\tau',\Ran i)$ be the orthogonal projections onto the
  summands of \eqref{(2.14)}. One easily checks the equality
\begin{equation}
q=i(ji)^{-1}j.
            \label{(2.15)}
\end{equation}

Decomposition~\eqref{(2.12)} is closely related to the expansion of
  functions on the sphere
  in Fourier series
  in spherical harmonics.
  In order to explain the relationship, we introduce some notation.

If  $(x^1,\dots,x^n)$ is a local coordinate system with the domain~$U$ then, for $x\in U$, a vector  $\xi\in T_x$ is uniquely represented as
   $\xi=\xi^i\frac{\partial}{\partial x^i}(x)$.
   The functions $x^i,\xi^i$ $(i=1,\dots,n)$ form a local
  coordinate system on the manifold~$T$ with the
  domain~$\pi^{-1}(U)$,
  where $\pi$~is the projection of the tangent bundle. Strictly speaking,
  we should write $x^i\circ\pi$ instead of~$x^i$.  We use the shorter
  notation~$x^i$ and hope it will not cause any ambiguity.
  Thus, every function $\varphi\in C^\infty\big(\pi^{-1}(U)\big)$ can be
  written in coordinates as follows:
$$
\varphi=\varphi(x^1,\dots,x^n,\xi^1,\dots,\xi^n)\quad
(x\in U,\xi\in T_x).
$$
 Since $T_x$ has the structure of an Euclidean space, the
  Laplace operator
 $ {\Deltav}_{x}:C^\infty(T_x)\rightarrow C^\infty(T_x) $ is
 well defined. It smoothly depends on~$x$, and therefore,
 defines an operator~$\Deltav:C^\infty(T)\rightarrow C^\infty(T)$.
 \big(Warning: Do not mix up $C^\infty(T)$ and $C^\infty(\tau)$!\big)
 The operator is written in coordinates as
$$
\Deltav\varphi(x,\xi)=g^{ij}(x)
\frac {\partial^2\varphi(x,\xi)} {\partial\xi^i\partial\xi^j}
$$
 and is called the
 {\it vertical}
 (or {\it fiberwise})
 Laplacian.

The embedding $\varkappa_x:S^*T'_x\rightarrow C^\infty(T_x)$
 is defined by
$$
(\varkappa_xu)(\xi)=u_{i_1\dots i_m}\xi^{i_1}\dots\xi^{i_m}
$$
for $u\in S^mT'_x$. It smoothly depends on $x$, and hence,
 defines an embedding  $\varkappa:C^\infty(S^*\tau')\rightarrow C^\infty(T)$ by the formula:
$$
(\varkappa u)(x,\xi)=u_{i_1\dots i_m}(x)\xi^{i_1}\dots\xi^{i_m}
$$
for $ u\in C^\infty(S^m\tau') $. Thus,~$\varkappa$ identifies  rank~$m$ symmetric tensor fields
with homogeneous polynomials
 (with respect to~$\xi$) of degree~$m$ on~$T$.

Let $\Omega=\Omega M$~be the submanifold of $T$ which consists of the unit vectors,
 and let $\Omega_x=\Omega\cap T_x$ be the unit sphere in~$T_x$.
 Given $u\in S^*T'_x$, denote the
 restriction of~$\varkappa_xu$ to~$\Omega_x$ by~$\lambda_xu$ . The operator
 $\lambda_x:S^*T'_x\rightarrow C^\infty(\Omega_x)$
 smoothly depends on~$x$ and defines an operator
$ \lambda:C^\infty(S^*\tau')\rightarrow C^\infty(\Omega) $.

We introduce an inner product
 $\langle\boldsymbol{\cdot}\,,\boldsymbol{\cdot}\rangle_\omega$
 on the space $C^\infty(\Omega_x)$ by the formula
$$
\langle\varphi,\psi\rangle_\omega =
\int\limits_{\Omega_x}\varphi(\xi)\psi(\xi)d\omega(\xi),
$$
where $d\omega$~is the volume form on~$\Omega_x$ induced
 by the metric~$g$. The index~$\omega$ is used in the notation
 in order to distinguish this product from the product defined by \eqref{(2.9)}.

\begin{Lemma}\label{2.4}
The following equalities hold on~$S^mT'_x$\dv
\begin{equation}
\lambda_xi=\lambda_x,
            \label{(2.16)}
\end{equation}
\begin{equation}
m(m-1)\varkappa_xj={\Deltav}_{x}\varkappa_x.
            \label{(2.17)}
\end{equation}
For $u,v\in S^mT'_x$ such that $ju=jv=0$$,$
 the following equality holds$:$
\begin{equation}
\langle\lambda_x u,\lambda_x v\rangle_\omega =
\frac {m!\pi^{n/2}} {2^{m-1}\Gamma(n/2+m)}
\langle u,v\rangle.
            \label{(2.18)}
\end{equation}
\end{Lemma}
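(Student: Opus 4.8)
The plan is to establish the three identities separately, the first two by a direct coordinate computation and the third by an invariance argument followed by the determination of a single normalizing constant. For~\eqref{(2.16)} I would use that $\varkappa_x$ turns the symmetric product into the ordinary product of polynomials, so that for $u\in S^mT'_x$ one has $\varkappa_x(iu)(\xi)=\varkappa_x(g)(\xi)\,\varkappa_x(u)(\xi)=(g_{ij}\xi^i\xi^j)\,\varkappa_xu(\xi)$; on $\Omega_x$ the factor $g_{ij}\xi^i\xi^j$ equals~$1$, whence $\lambda_x(iu)=\lambda_xu$. For~\eqref{(2.17)} I would differentiate $\varkappa_xu(\xi)=u_{i_1\dots i_m}\xi^{i_1}\cdots\xi^{i_m}$ twice in~$\xi$; the full symmetry of~$u$ gives
\[
\frac{\partial^2}{\partial\xi^j\partial\xi^k}\varkappa_xu
=m(m-1)\,u_{jki_3\dots i_m}\xi^{i_3}\cdots\xi^{i_m},
\]
and contracting with $g^{jk}$ produces $m(m-1)(ju)_{i_3\dots i_m}\xi^{i_3}\cdots\xi^{i_m}$, which is precisely ${\Deltav}_x\varkappa_xu=m(m-1)\varkappa_x(ju)$.

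For~\eqref{(2.18)} I would first observe that, by~\eqref{(2.17)}, the hypotheses $ju=jv=0$ make $\varkappa_xu$ and $\varkappa_xv$ homogeneous harmonic polynomials of degree~$m$. Fixing an orthonormal basis at~$x$ (so that raising and lowering indices is trivial) and expanding the integral, I would write
\[
\langle\lambda_xu,\lambda_xv\rangle_\omega
=u_{i_1\dots i_m}v_{j_1\dots j_m}\,T^{i_1\dots i_mj_1\dots j_m},
\qquad
T^{i_1\dots i_{2m}}=\int_{\Omega_x}\xi^{i_1}\cdots\xi^{i_{2m}}\,d\omega(\xi).
\]
The tensor $T$ is symmetric and invariant under the orthogonal group, hence, by the first fundamental theorem of invariant theory, it is a scalar multiple~$A_m$ of the sum over all $(2m-1)!!$ pairings of $\{1,\dots,2m\}$ of products of $m$ Kronecker deltas.

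The crucial simplification is then that $u$ and $v$ are trace-free: any delta pairing two indices both belonging to the $u$-group contributes a factor $ju=0$, and likewise for the $v$-group. Consequently only the $m!$ ``cross'' pairings — those matching each index of~$u$ with one index of~$v$ — survive, and, by the symmetry of~$v$, each of them contributes $\langle u,v\rangle$. This yields $\langle\lambda_xu,\lambda_xv\rangle_\omega=A_m\,m!\,\langle u,v\rangle$, reducing the proof to the evaluation of~$A_m$. I would compute $A_m$ from the single component $T^{1\dots1}=\int_{\Omega_x}(\xi^1)^{2m}\,d\omega=A_m(2m-1)!!$ together with the standard reduction of a spherical integral to a Gaussian one,
\[
\int_{\Omega_x}h\,d\omega=\frac{2}{\Gamma(m+n/2)}\int_{\mathbb R^n}h(\xi)\,e^{-|\xi|^2}\,d\xi ,
\]
valid for $h$ homogeneous of degree~$2m$. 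Taking $h=(\xi^1)^{2m}$ factorizes the right-hand side into one-dimensional moments and gives $A_m=2\pi^{n/2}/(2^m\Gamma(n/2+m))$, so that $A_m\,m!=m!\pi^{n/2}/(2^{m-1}\Gamma(n/2+m))$, which is exactly~\eqref{(2.18)}.

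The routine parts are the two coordinate computations and the Gaussian moment evaluation; the step demanding the most care is the structural description of~$T$ together with the combinatorial bookkeeping that, using trace-freeness, isolates precisely the $m!$ cross-pairings. As an alternative to the invariant-theory input, one could note that both sides of~\eqref{(2.18)} are $O(n)$-invariant symmetric bilinear forms on the irreducible module $\Ker j_m$ of trace-free symmetric tensors and invoke Schur's lemma to obtain proportionality, again fixing the constant by the same single Gaussian computation.
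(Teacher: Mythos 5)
Your proposal is correct. The arguments for \eqref{(2.16)} and \eqref{(2.17)} coincide with the paper's: dropping the symmetrization against the symmetric monomial $\xi^{i_1}\cdots\xi^{i_{m+2}}$ and using $g_{ij}\xi^i\xi^j=1$ on $\Omega_x$, respectively a direct second differentiation in $\xi$. For \eqref{(2.18)}, however, you take a genuinely different route. The paper reduces to $u=v$, passes to multiindex notation, and imports the spherical-harmonics identity $A_{m,m}=\frac{2^m\Gamma(n/2+m)\Gamma(m+1)}{\Gamma(n/2)}A_{0,m}$ from Sobolev's book, so the analytic core of the computation is delegated to a citation; trace-freeness enters only through the fact (via \eqref{(2.17)}) that $\lambda_xu$ is a spherical harmonic. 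You instead compute the moment tensor $T^{i_1\dots i_{2m}}=\int_{\Omega_x}\xi^{i_1}\cdots\xi^{i_{2m}}\,d\omega$ directly: its pairing structure (which, as you note, follows either from invariant theory plus total symmetry, or — more economically — straight from Wick's theorem applied to your Gaussian reduction, which makes the invariant-theory input redundant), the observation that trace-freeness of $u$ and $v$ annihilates every within-group pairing so that exactly the $m!$ cross-pairings survive, each contributing $\langle u,v\rangle$ by the symmetry of $v$, and the evaluation $A_m=2\pi^{n/2}/\bigl(2^m\Gamma(n/2+m)\bigr)$, which indeed yields $A_m\,m!=m!\,\pi^{n/2}/\bigl(2^{m-1}\Gamma(n/2+m)\bigr)$ (and checks out at $m=0,1$ against $\omega_n$ and $\omega_n\delta^{ij}/n$). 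What your approach buys is self-containedness — no external lemma on spherical harmonics is needed, and trace-freeness is used transparently at the combinatorial level — at the cost of slightly more bookkeeping with pairings; the paper's approach is shorter on the page but opaque without the cited reference. Your closing remark that Schur's lemma on the irreducible $O(n)$-module $\Ker j_m$ gives the proportionality abstractly, with the constant fixed by one Gaussian moment, is also sound and is a third legitimate route.
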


\begin{proof}
 For $u\in S^mT'_x$ and $\xi\in\Omega_x$, we have
$$
(\lambda_xi u)(\xi)=
\Big(\sigma(i_1\dots i_{m+2})(u_{i_1\dots i_m}
g_{i_{m+1}i_{m+2}})\Big)\xi^{i_1}\dots\xi^{i_{m+2}}.
$$
 Since the product
  $\xi^{i_1}\cdots\xi^{i_{m+2}}$
 is symmetric with respect to
 $(i_1,\dots,i_{m+2})$,
 we can omit the symmetrization
 $\sigma(i_1\dots i_{m+2})$ here.
 So we get
$$
(\lambda_xiu)(\xi)=
(u_{i_1\dots i_m}\xi^{i_1}\dots\xi^{i_m})
(g_{i_{m+1}i_{m+2}}\xi^{i_{m+1}}\xi^{i_{m+2}})=
u_{i_1\dots i_m}\xi^{i_1}\dots\xi^{i_m}
$$
because $g_{i_{m+1}i_{m+2}}\xi^{i_{m+1}}\xi^{i_{m+2}}=1$
 on~$\Omega_x$.
 By definition, the right-hand side of the last formula equals
  $(\lambda_xu)(\xi)$. This proves~\eqref{(2.16)}.
Formula~\eqref{(2.17)}
 is also proved by direct calculations:
$$
\begin{aligned}
({\Deltav}_{x}\varkappa_xu)(\xi)&=g^{ij}
\frac {\partial^2} {\partial\xi^i\partial\xi^j}
\left(u_{i_1\dots i_m}\xi^{i_1}\dots\xi^{i_m}\right)
\\
&=
m(m-1)g^{ij}
u_{i_1\dots i_{m-2}ij}\xi^{i_1}\dots\xi^{i_{m-2}}
=m(m-1)(\varkappa_xju)(\xi).
\end{aligned}
$$

It suffices to prove \eqref{(2.18)} for $u=v$.
 The polynomial $\varkappa_xu$ can be written in the two ways:
\begin{equation}
(\varkappa_xu)(\xi)=
u_{i_1\dots i_m}\xi^{i_1}\dots\xi^{i_m}=
\sum\limits_{|\alpha|=m}u_\alpha\xi^\alpha,
            \label{(2.19)}
\end{equation}
where $\alpha=(\alpha_1,\dots,\alpha_n)$~is a multiindex.
 The coefficients of~\eqref{(2.19)} are related by the equality
  $u_\alpha=\frac{m!}{\alpha!}u_{i(\alpha)}$, with
 $i(\alpha)=(1\dots 1\,2\dots 2\dots n\dots n)$ where $1$
 is repeated~$\alpha_1$ times,
 $2$~is repeated $\alpha_2$ times, etc.

We choose the coordinates in a neighborhood of~$x$
 so that
  $g_{ij}(x)=\delta_{ij}$, where $(\delta_{ij})$~is
 the Kronecker symbol. Then
 \begin{equation}
\langle u,u\rangle =\frac {1} {m!}
\sum\limits_{|\alpha|=m}\alpha!|u_\alpha|^2.
            \label{(2.20)}
\end{equation}
Indeed,
$$
\langle u,u\rangle =
\sum\limits_{i_1,\dots i_m=1}^{n}
|u_{i_1\dots i_m}|^2=
\sum\limits_{|\alpha|=m}\frac {m!} {\alpha!}|u_{i(\alpha)}|^2.
$$
Taking the relations $u_{i(\alpha)}=\alpha!u_\alpha/m!$ into account,
 we obtain~\eqref{(2.20)}.

For $|\alpha|=m$, formula \eqref{(2.19)} implies
\begin{equation}
\partial^\alpha_\xi(\varkappa_xu)=\alpha!u_\alpha.
            \label{(2.21)}
\end{equation}
Therefore equality~\eqref{(2.20)} can be written as
\begin{equation}
\langle u,u\rangle =\frac {1} {m!}
\sum\limits_{|\alpha|=m}
\frac {1} {\alpha!}|\partial^\alpha_\xi(\varkappa_xu)|^2.
            \label{(2.22)}
\end{equation}

In virtue of \eqref{(2.17)}, the condition $ju=0$ means $\lambda_xu$
 is a spherical harmonics of degree $m$. Ass well known,
 the spherical harmonics satisfy the equality
\begin{equation}
A_{m,m}=
\frac {2^m\Gamma(n/2+m)\Gamma(m+1)} {\Gamma(n/2)}A_{0,m},
            \label{(2.23)}
\end{equation}
(for example, see \cite[Lemma~XI.1]{[Sob]}),
    where
\begin{equation}
A_{0,m}=
\int\limits_{\Omega_x}|\lambda_xu(\xi)|^2d\omega(\xi)
=\langle\lambda_xu,\lambda_xu\rangle_\omega,
            \label{(2.24)}
\end{equation}
$$
A_{m,m}=\sum\limits_{|\alpha|=m}
\frac {m!} {\alpha!}
\int\limits_{\Omega_x}|\partial^\alpha_\xi(\varkappa_xu)|^2
d\omega(\xi).
$$
By \eqref{(2.21)}, the last integrand is constant and the last formula gives
$$
A_{m,m}=m!\omega_n\sum\limits_{|\alpha|=m}
\alpha!|u_\alpha|^2,
$$
where $\omega_n$~is the volume of the unit sphere
 in~${\mathbb R}^n$.
  Together with~\eqref{(2.20)}, the last formula implies
\begin{equation}
A_{m,m}=(m!)^2\omega_n\langle u,u\rangle.
            \label{(2.25)}
\end{equation}

Finally, substituting~\eqref{(2.24)}, \eqref{(2.25)}, and
 $\omega_n\,{=}\,2\pi^{n/2}/\Gamma(n/2)$ into~\eqref{(2.23)},
 we obtain~\eqref{(2.18)}.
 \end{proof}


According to Lemma~\ref{2.4}, the operator~$\lambda_x$
 isomorphically maps the subspace $\Ker j_m\subset S^mT'_x$ onto the space
 of spherical harmonics of degree~$m$ on~$\Omega_x$. Moreover,
 this isomorphism is an isometry up to a constant factor
 if~$\Ker j_m$ is equipped with the inner product
   $\langle\boldsymbol{\cdot}\,,\boldsymbol{\cdot}\rangle$
     and the space of harmonics is equipped with the product
 $\langle\boldsymbol{\cdot}\,,\boldsymbol{\cdot}\rangle_\omega$.

As well known (for example, see~\cite{[Sob]}), spherical harmonics
 of different degrees are orthogonal to each other and every
 function $\varphi\in C^\infty(\Omega_x)$ can be expanded
 in the Fourier series in spherical harmonics of different degrees.
 The series converges absolutely and uniformly.
 The expansion smoothly depends on the point~$x$, and
  we arrive to the following statement.

\begin{Lemma}\label{2.5}
Every function
 $\varphi\in C^\infty(\Omega)$
 can be uniquely represented by the series
\begin{equation}
\varphi=\sum\limits_{m=0}^{\infty}\lambda u_m,
            \label{(2.26)}
\end{equation}
where $u_m\in C^\infty(S^m\tau')$ satisfy the condition $ju_m=0$.
 The series converges absolutely and uniformly on each compact
 subset of~$\Omega$.
\end{Lemma}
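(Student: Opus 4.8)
The plan is to construct the decomposition fiberwise and then verify that it is smooth in~$x$ and convergent. First I would fix $x\in M$ and apply the classical theory of spherical harmonics on the Euclidean sphere~$\Omega_x$ (as in~\cite{[Sob]}): the function $\varphi(x,\boldsymbol{\cdot})\in C^\infty(\Omega_x)$ has a unique expansion $\varphi(x,\boldsymbol{\cdot})=\sum_{m=0}^\infty h_m(x,\boldsymbol{\cdot})$ into spherical harmonics $h_m(x,\boldsymbol{\cdot})$ of degree~$m$, the series converging absolutely and uniformly on~$\Omega_x$. By~\eqref{(2.17)}, a function on~$\Omega_x$ is a spherical harmonic of degree~$m$ precisely when it equals $\lambda_xw$ for some $w\in\Ker j_m$, and by Lemma~\ref{2.4} the restriction of $\lambda_x$ to $\Ker j_m$ is an isomorphism onto the space of degree-$m$ spherical harmonics. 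Hence there is a unique $u_m(x)\in\Ker j_m\subset S^mT'_x$ with $h_m(x,\boldsymbol{\cdot})=\lambda_xu_m(x)$, and $\varphi=\sum_m\lambda u_m$ holds pointwise.

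Uniqueness is then immediate: if $\sum_m\lambda u_m=0$, then evaluating at a point and using that spherical harmonics of distinct degrees are $\langle\boldsymbol{\cdot}\,,\boldsymbol{\cdot}\rangle_\omega$-orthogonal forces each $\lambda_xu_m(x)=0$; since $\lambda_x$ is injective on $\Ker j_m$, we get $u_m(x)=0$.

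The main obstacle is to show $u_m\in C^\infty(S^m\tau')$, i.e.\ that the fiberwise projection depends smoothly on~$x$. I would realize the projection onto degree-$m$ harmonics as the integral operator with reproducing (zonal) kernel,
\begin{equation*}
h_m(x,\eta)=\int_{\Omega_x}Z^x_m(\xi,\eta)\,\varphi(x,\xi)\,d\omega(\xi),
\end{equation*}
where $Z^x_m(\xi,\eta)$ is a universal (Gegenbauer) polynomial in $\langle\xi,\eta\rangle=g_{ij}(x)\xi^i\eta^j$ with coefficients depending only on~$n$ and~$m$. Since $Z^x_m$ is polynomial in the metric coefficients $g_{ij}(x)$ and the volume form $d\omega$ depends smoothly on~$x$, the right-hand side is smooth jointly in $(x,\eta)$. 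To recover $u_m$ from $h_m$ I would use that $h_m(x,\boldsymbol{\cdot})$ is the restriction to $\Omega_x$ of a unique homogeneous harmonic polynomial of degree~$m$; by~\eqref{(2.17)} the coefficients of that polynomial are exactly the components of the trace-free tensor $u_m(x)$, and, pairing against a smooth local frame of the subbundle $\Ker j_m$ by means of~\eqref{(2.18)}, these components are seen to depend smoothly on~$x$. Thus $u_m\in C^\infty(S^m\tau')$.

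Finally, for convergence on compact sets I would invoke the quantitative form of the classical estimate: for $C^\infty$ data the harmonic components satisfy $\sup_{\Omega_x}|h_m(x,\boldsymbol{\cdot})|\le C_N(1+m)^{-N}$ for every~$N$, where $C_N$ is controlled by finitely many $C^\infty$-seminorms of $\varphi(x,\boldsymbol{\cdot})$ together with the metric. Over a compact $K\subset\Omega$, whose projection to~$M$ is compact, these seminorms are uniformly bounded, so the bounds are uniform in~$x$, and the rapid decay in~$m$ yields absolute and uniform convergence of $\sum_m\lambda u_m$ on~$K$.
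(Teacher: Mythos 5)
Your proof is correct and follows essentially the same route as the paper, which likewise expands $\varphi(x,\boldsymbol{\cdot})$ fiberwise in spherical harmonics (citing~\cite{[Sob]}), identifies degree-$m$ harmonics with $\Ker j_m$ via the isomorphism $\lambda_x$ from Lemma~\ref{2.4}, and then simply asserts that the expansion depends smoothly on~$x$. Your zonal-kernel argument for smoothness of $u_m$ and the rapid-decay estimate for uniform convergence are exactly the standard details the paper leaves implicit, and they are sound.
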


By Lemma~2.3, a tensor field
  $u\in C^\infty(S^m\tau')$ is
  uniquely represented in the form
\begin{equation}
u=\sum\limits_{k=0}^{[m/2]}i^ku_{m-2k},
            \label{(2.27)}
\end{equation}
where every
 $u_{m-2k}\in C^\infty(S^{m-2k}\tau')$
 satisfies
 $ju_{m-2k}=\nobreak 0$. Expansion~\eqref{(2.27)}
 coincides with the Fourier series~\eqref{(2.26)} of the function
 $\lambda u\in C^\infty(\Omega)$.
 More precisely, the Fourier series of the function~$\lambda u$
 has a finite number of the summands, namely,
$$
\lambda u=\sum\limits_{k=0}^{[m/2]}\lambda u_{m-2k}
$$
where the tensor fields $u_{m-2k}$ are the same as in~\eqref{(2.27)}.

\section{The operators $ d $ and $ \delta $}

Given a Riemannian manifold~$(M,g)$, let
$ {\nabla}:C^\infty(\otimes^m\tau')\rightarrow
C^\infty(\otimes^{m+1}\tau') $ be the covariant
 differentiation with respect to the Levi-Civita connection.
For a tensor field $ u=(u_{i_1\dots i_m}) $, higher order covariant derivatives are denoted by
$ {\nabla}^ku=({\nabla}_{\!j_1\dots j_k}u_{i_1\dots i_m}) $.
The
 {\it inner derivative}
 $d:C^\infty(S^*\tau')\rightarrow C^\infty(S^*\tau')$
 is defined by
 $d=\sigma{\nabla}$. The
 {\it divergence\/}
 $\delta:C^\infty(S^*\tau')\rightarrow C^\infty(S^*\tau')$
 is defined in local coordinates as
 $(\delta u)_{i_1\dots i_{m-1}}=g^{jk}\nabla_j u_{ki_1\dots i_{m-1}}$.
 These~$d$ and $\delta$ are the first order differential operators of
 degree~$1$ and $-1$, respectively.
 They were already mentioned in Introduction.

\begin{Theorem} \label{3.1}
The operators $d$ and $-\delta$ are dual
  to each other with respect to the $L^2$-product of symmetric tensor fields.
 Moreover$,$ for a compact manifold~$M$ with boundary$,$  Green's formula
$$
\int\limits_{M}(
\langle du,v\rangle +\langle u,\delta v\rangle)dV=
\int\limits_{\partial M}
\langle i_\nu u,v\rangle dV'
$$
holds for $u,v\in C^\infty(S^*\tau')$$,$ where $\nu$~is the unit
 outward normal to the boundary$,$ and~$dV$ and $dV'$ are
 the Riemannian volume forms of~$M$ and of $\partial M$$,$ respectively.
\end{Theorem}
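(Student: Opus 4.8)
The plan is to establish the duality of $d$ and $-\delta$ and Green's formula simultaneously, since the duality is just the special case of Green's formula for compactly supported fields (or for closed $M$). The natural starting point is the classical divergence theorem: for any vector field $X$ on a compact manifold with boundary,
$$
\int_M (\delta X)\,dV = -\int_{\partial M}\langle X,\nu\rangle\,dV',
$$
where I have written the divergence of a vector field with the analyst's sign convention matched to the tensorial $\delta$. The idea is to manufacture, from the pair $u,v\in C^\infty(S^*\tau')$, a one-form (equivalently a vector field) whose divergence is exactly the pointwise difference $\langle du,v\rangle - \langle u,\delta v\rangle$ of the two integrands appearing in Green's formula, and then apply the divergence theorem to that vector field.

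Concretely, suppose $u\in C^\infty(S^m\tau')$ and $v\in C^\infty(S^{m+1}\tau')$ (the only degrees for which the pairings are nonzero, since $d$ raises degree by one). First I would define the one-form $w_k = u^{i_1\dots i_m} v_{k i_1\dots i_m}$ and compute its divergence $\delta w = g^{jk}\nabla_j w_k$ using the Leibniz rule and metric-compatibility of the Levi-Civita connection. This gives two terms: one in which the derivative falls on $u$ and one in which it falls on $v$. The second term is literally $\langle u,\delta v\rangle$ by the coordinate definition of $\delta$. The first term is $\langle \nabla u, v\rangle$ with a full contraction; here the key algebraic observation is that since $v$ is fully symmetric, contracting $\nabla u$ against $v$ is the same as contracting $\sigma(\nabla u)=du$ against $v$, so this term equals $\langle du,v\rangle$. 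Thus $\delta w = \langle du,v\rangle + \langle u,\delta v\rangle$ up to signs, and integrating via the divergence theorem produces the boundary term $\int_{\partial M}\langle i_\nu u, v\rangle\,dV'$, where one checks that the contraction $w_k\nu^k$ at the boundary is exactly $\langle i_\nu u, v\rangle$ by the definition of $i_\nu$ as symmetric multiplication by $\nu$ followed by the symmetry of $v$.

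The one genuinely delicate point, and the step I expect to be the main obstacle, is the bookkeeping of symmetrization and combinatorial factors: one must verify that $\langle \nabla u, v\rangle = \langle \sigma\nabla u, v\rangle = \langle du, v\rangle$ with no stray normalization constant, and likewise that $w_k\nu^k = \langle i_\nu u,v\rangle$ carries the correct factor. Both rest on the same principle---that contracting a partially symmetric tensor against a fully symmetric one is insensitive to further symmetrization of the first argument---but because $\sigma$ and $i_\nu$ carry the $1/m!$-type averaging built into their definitions in Section 2, the constants must be tracked carefully. Once the pointwise identity $\delta w = \langle du,v\rangle + \langle u,\delta v\rangle$ is in hand, the rest is immediate: applying the divergence theorem yields Green's formula, and specializing to $u\in C^\infty_0$ or to closed $M$ kills the boundary integral and gives the asserted duality $(du,v)_{L^2} = -(u,\delta v)_{L^2}$.
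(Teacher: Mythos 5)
Your proposal is correct and is essentially the paper's own argument: the paper gives no proof of Theorem~3.1 beyond the citation to \cite[\S\,3.3]{[mb]}, where precisely this contraction $w_k=u^{i_1\dots i_m}v_{ki_1\dots i_m}$ followed by the divergence theorem is carried out, and your key algebraic point is sound with no stray constants, since $\sigma$ is the \emph{orthogonal projection} onto symmetric tensors, so $\langle\nabla u,v\rangle=\langle\sigma\nabla u,v\rangle=\langle du,v\rangle$ and $w_k\nu^k=\langle\nu\otimes u,v\rangle=\langle i_\nu u,v\rangle$ whenever $v$ is fully symmetric. The only slip is the sign in your opening divergence theorem: the paper's $\delta$ on one-forms is the plain divergence $\nabla^k w_k$ (not the Hodge codifferential), for which $\int_M \delta w\,dV=+\int_{\partial M}\langle w,\nu\rangle\,dV'$; dropping your minus sign removes the ``up to signs'' hedge and yields Green's formula exactly as stated, with the duality of $d$ and $-\delta$ following by taking compactly supported fields.
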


The proof is given in~\cite[\S\,3.3]{[mb]}.

\begin{Lemma} \label{3.2}
The following equalities hold on $C^\infty(S^*\tau')$\dv
\begin{alignat}2
 &id=di, &&\qquad j\delta=\delta j,
 \label{(3.1)}\\
 &pdp=pd,&&\qquad p\delta p=\delta p,
 \label{(3.2)}\\
 &qdq=dq,&&\qquad q\delta q=q\delta,
 \label{(3.3)}\\
 &pdq=0, &&\qquad q\delta p=0.
 \label{(3.4)}
 \end{alignat}
\end{Lemma}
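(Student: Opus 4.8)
The plan is to trace every identity back to the single geometric fact $id=di$ and then let duality and the orthogonality of the decomposition~\eqref{(2.14)} do the rest; in fact the eight identities collapse to two genuinely independent ones.

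I would begin with~\eqref{(3.1)}. The equality $id=di$ merely records the parallelism $\nabla g=0$ of the metric tensor: $i$ is symmetric multiplication by $g$, $d=\sigma\nabla$ is symmetrized covariant differentiation, and since the Levi-Civita connection annihilates $g$ and commutes with the (pointwise, constant-coefficient) symmetrization $\sigma$, multiplication by $g$ passes through $d$. Dualizing with respect to the $L^2$-product and invoking $i^*=j$ and $d^*=-\delta$ from Theorem~\ref{3.1} converts $id=di$ into $j\delta=\delta j$.

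The key consequence of $id=di$ is that $\Ran i$ is $d$-invariant, since $d(iu)=i(du)$. As $q$ is the orthogonal projection onto $\Ran i$ and $p$ the orthogonal projection onto the complementary summand $\Ker j$, the vector $dqu$ lies in $\Ran i$ and is therefore annihilated by $p$; this gives $pdq=0$, the first identity of~\eqref{(3.4)}. Because $p+q=E$, this single identity is equivalent to both remaining ``$d$-identities'': from $pd=pdp+pdq$ one reads off $pdp=pd$ (the first identity of~\eqref{(3.2)}), and from $dq=pdq+qdq$ one reads off $qdq=dq$ (the first identity of~\eqref{(3.3)}). The three ``$\delta$-identities'' then follow by duality. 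Since $p$ and $q$ are orthogonal projections they are self-adjoint, so taking the $L^2$-adjoint of $pdq=0$ and using $d^*=-\delta$ yields $q\delta p=0$, the second identity of~\eqref{(3.4)}; and, exactly as before, $q\delta=q\delta p+q\delta q$ and $\delta p=p\delta p+q\delta p$ turn this into $q\delta q=q\delta$ and $p\delta p=\delta p$, completing~\eqref{(3.2)} and~\eqref{(3.3)}.

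Thus the entire lemma rests on the two mutually dual facts $pdq=0$ and $q\delta p=0$, which in turn come from the $d$-invariance of $\Ran i$ (equivalently, the $\delta$-invariance of $\Ker j$, since $ju=0$ forces $j\delta u=\delta ju=0$) together with orthogonality. The only place where real content enters is the verification of $id=di$; the main obstacle, therefore, is the careful bookkeeping of index symmetrizations needed to confirm that $\nabla$ commutes with $\sigma$, so that $\nabla g=0$ genuinely produces $id=di$. Everything downstream is formal algebra of projections and adjoints.
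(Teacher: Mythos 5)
Your proposal is correct, and it reorganizes the paper's argument in a genuinely (if modestly) different way. The paper, like you, proves one identity per dual pair, but its pivot is the first identity of \eqref{(3.3)}: it proves $j\delta=\delta j$ by a direct coordinate computation (you prove the dual $id=di$ instead, which is equivalent and equally routine given $\nabla g=0$), then establishes $qdq=dq$ by the explicit operator chain $qdq=i(ji)^{-1}jdi(ji)^{-1}j=i(ji)^{-1}jid(ji)^{-1}j=id(ji)^{-1}j=dq$, resting on the formula $q=i(ji)^{-1}j$ of \eqref{(2.15)}; it then obtains \eqref{(3.2)} by substituting $q=E-p$ and finally gets $pdq=0$ by left-multiplying $qdq=dq$ by $p$. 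You invert this order, making $pdq=0$ the pivot and deriving it conceptually from the $d$-invariance of $\Ran i$, after which \eqref{(3.2)} and \eqref{(3.3)} fall out of $p+q=E$ and the second column follows from self-adjointness of the projections and $d^*=-\delta$, exactly as you say. Your route is arguably cleaner in that it exposes \emph{why} the paper's computation works, but note that your one nontrivial step --- writing $qu=iw$ with $w$ a \emph{smooth} field so that $dqu=i(dw)\in\Ran i$ --- silently uses the same machinery: one needs that $\Ran i$ is a smooth subbundle on which $i$ admits a smooth left inverse, which is precisely what Lemma~\ref{2.3} and \eqref{(2.15)} provide (concretely $w=(ji)^{-1}jqu$). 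So the explicit projection formula the paper computes with is still present in your argument, one level down; making that citation explicit would close the only loose end. Both approaches deliver the same content, yours trading a two-line algebraic computation for a structural invariance statement plus the observation that all four $d$-identities are linear consequences of any single one of them.
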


 \begin{proof}
 These formulas are written in pairs every of which is formed by
 two relatively dual relationships. It suffices to prove one formula
 in each pair. The second formula in~\eqref{(3.1)} can be proved by
 direct calculation in coordinates and we omit it.
 The first formula in~\eqref{(3.3)} is derived from~\eqref{(2.15)}  and~\eqref{(3.1)} as follows:
$$
qdq=i(ji)^{-1}jdi(ji)^{-1}j=i(ji)^{-1}jid(ji)^{-1}j=
id(ji)^{-1}j=di(ji)^{-1}j=dq.
$$
 Formula~\eqref{(3.2)} is obtained from~\eqref{(3.3)} by the
 substitution $q=E-p$. The left multiplication of the first
 formula of~\eqref{(3.3)} by~$p$ implies the first
 formula of~\eqref{(3.4)}.
\end{proof}

\begin{Lemma} \label{3.3}
The following equalities hold on $C^\infty(S^m\tau')$\dv
\begin{equation}
\delta i=\frac {2} {m+2}d+\frac {m} {m+2}i\delta,
            \label{(3.5)}
\end{equation}
\begin{equation}
jd=\frac {2} {m+1}\delta+\frac {m-1} {m+1}dj.
            \label{(3.6)}
\end{equation}
\end{Lemma}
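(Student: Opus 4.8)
The plan is to prove the first identity~\eqref{(3.5)} by a direct computation in coordinates and then to deduce the second~\eqref{(3.6)} from it by duality, in exactly the way the pairs in Lemma~\ref{3.2} are handled: the two formulas are $L^2$-adjoints of one another, so a single direct calculation suffices.

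For~\eqref{(3.5)} I would start from the coordinate form of $iu=\sigma(g\otimes u)$. Grouping the $(m+2)!$ permutations in~\eqref{(2.1)} according to which pair of slots carries the metric factor, one rewrites it in the fully symmetric shape
\begin{equation*}
(iu)_{i_1\dots i_{m+2}}=\frac{1}{\binom{m+2}{2}}\sum_{1\le a<b\le m+2}g_{i_ai_b}\,u_{i_1\dots\widehat{i_a}\dots\widehat{i_b}\dots i_{m+2}},
\end{equation*}
where the hats mark the two omitted indices. Since the Levi-Civita connection is metric, $\nabla g=0$, so in $(\delta iu)_{i_1\dots i_{m+1}}=g^{jk}\nabla_j(iu)_{ki_1\dots i_{m+1}}$ every derivative lands on $u$. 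I would then split the sum of pairs for $(iu)_{ki_1\dots i_{m+1}}$ into the $m+1$ pairs that contain the contracted slot~$k$ and the $\binom{m+1}{2}$ pairs that avoid it. A pair $\{k,i_\ell\}$ contributes $g^{jk}g_{ki_\ell}\nabla_j u=\nabla_{i_\ell}u$; summed over $\ell$ and symmetrised this produces $(m+1)\,du$ by the definition $d=\sigma\nabla$. A pair $\{i_a,i_b\}$ avoiding~$k$ contributes $g_{i_ai_b}\,g^{jk}\nabla_j u_{k\dots}=g_{i_ai_b}(\delta u)$; summed over these pairs it produces $\binom{m+1}{2}\,i\delta u$. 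Multiplying by the normalisation $\binom{m+2}{2}^{-1}$ converts the two counts into the coefficients $\frac{2}{m+2}$ and $\frac{m}{m+2}$, which is precisely~\eqref{(3.5)}.

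To get~\eqref{(3.6)} I would take $L^2$-adjoints in~\eqref{(3.5)}, using $d^*=-\delta$ and hence $\delta^*=-d$ (Theorem~\ref{3.1}), together with $i^*=j$. Reading~\eqref{(3.5)} as a map $S^m\tau'\to S^{m+1}\tau'$, its left side has adjoint $(\delta i)^*=i^*\delta^*=-jd$, while its right side has adjoint $-\frac{2}{m+2}\delta-\frac{m}{m+2}dj$; equating them gives $jd=\frac{2}{m+2}\delta+\frac{m}{m+2}dj$ on $S^{m+1}\tau'$, and replacing $m+1$ by $m$ yields~\eqref{(3.6)}. As in Lemma~\ref{3.2}, this use of duality is legitimate because both sides are local differential operators, so it is enough to check the adjoint identity on compactly supported sections, where Green's formula carries no boundary term. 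I expect the only real obstacle to be the bookkeeping in the first step: keeping the two families of index pairs, the normalisations $\binom{m+2}{2}^{-1}$ and $\binom{m+1}{2}$, and the factor $m+1$ coming from the symmetriser in $d=\sigma\nabla$ aligned so that the exact coefficients emerge; this is combinatorial rather than conceptual.
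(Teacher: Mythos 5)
Your proposal is correct and follows essentially the same route as the paper: a direct coordinate computation of \eqref{(3.5)} using $\nabla g=0$ and a count of where the metric factor sits among the symmetrized index pairs, followed by $L^2$-duality (via $d^*=-\delta$, $i^*=j$) to obtain \eqref{(3.6)}. The only cosmetic difference is that you split the pairs into two groups (containing or avoiding the contracted slot) where the paper uses four, and your coefficient bookkeeping $\frac{m+1}{\binom{m+2}{2}}=\frac{2}{m+2}$, $\frac{\binom{m+1}{2}}{\binom{m+2}{2}}=\frac{m}{m+2}$ checks out exactly.
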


\begin{proof}
 It suffices to prove~\eqref{(3.5)} since~\eqref{(3.6)}
 is obtained by the duality.
For $u\in C^\infty(S^m\tau')$, we have
$$
(\delta iu)_{i_1\dots i_mj}=
g^{kl}[\sigma(i_1\dots i_mjk)(g_{jk}
{\nabla}_{\!l}u_{i_1\dots i_m})]
$$
 by the definition of $i$ and~$\delta$,
 and by the equality ${\nabla}g=0$.
The expression in brackets represents the sum
 over~$\Pi_{m+2}$.
 We divide all the terms of the sum into four groups as follows:
 The first group includes the products of the form
 $g_{rs}\nabla_lu_{t_1\dots t_m}$ for $\{r,s\}=\{j,k\}$;
 the second group (the third group)
 consists of the products such that
 $k\in\{r,s\}$ and $j\notin\{r,s\}$ ($k\notin\{r,s\}$ and $j\in\{r,s\}$),
   and the fourth group contains all the remaining terms. We thus obtain
$$
\begin{aligned}
\frac {(m+1)(m+2)} {2}
(\delta iu)_{i_1\dots i_mj}=
g^{kl}\Big[&g_{jk}{\nabla}_{\!l}u_{i_1\dots i_m}
+
\sum\limits_{a=1}^{m}g_{ki_a}{\nabla}_{\!l}
u_{ji_1\dots\widehat{i_a}\dots i_m}\\
&+
\sum\limits_{a=1}^{m}g_{ji_a}{\nabla}_{\!l}
u_{ki_1\dots\widehat{i_a}\dots i_m}
+
\sum\limits_{1\leq a<b\leq m}g_{i_ai_b}{\nabla}_{\!l}
u_{jki_1\dots\widehat{i_a}\dots\widehat{i_b}\dots i_m}
\Big],
\end{aligned}
$$
where $\wedge$ over an index means the index is omitted.
 Using the identity $ g^{kl}g_{jk}=\delta^l_k$,
 where $(\delta^l_k)$ is the Kronecker tensor,
 we rewrite the last formula in the form
$$
\begin{aligned}
\frac {(m+1)(m+2)} {2}
(\delta iu)_{i_1\dots i_mj}&=
{\nabla}_{\!j}u_{i_1\dots i_m}
+
\sum\limits_{a=1}^{m}{\nabla}_{\!i_a}
u_{ji_1\dots\widehat{i_a}\dots i_m}\\
&+
\sum\limits_{a=1}^{m}g_{ji_a}
(\delta u)_{i_1\dots\widehat{i_a}\dots i_m}
+
\sum\limits_{1\leq a<b\leq m}g_{i_ai_b}
(\delta u)_{ji_1\dots\widehat{i_a}\dots\widehat{i_b}\dots i_m}.
\end{aligned}
$$
The sum of the first two summands on the right-hand side of this formula equals\\
 $(m+1)(du)_{ji_1\dots i_m}$,
 and the sum of the last two summands equals
 $\frac{m(m+1)}2(i\delta u)_{ji_1\dots i_m}$.
\end{proof}

\begin{Lemma}\label{3.4}
The following equalities hold on $C^\infty(S^m\tau')$\dv
\begin{equation}
dq=qd-\frac {m} {n+2m-2}i\delta p,\quad
q\delta=\delta q-\frac {m-1} {n+2m-4} pdj,
            \label{(3.7)}
\end{equation}
\begin{equation}
dp=pd+\frac {m} {n+2m-2} i\delta p,\quad
p\delta=\delta p+\frac {m-1} {n+2m-4} pdj.
            \label{(3.8)}
\end{equation}
\end{Lemma}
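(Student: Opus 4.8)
The plan is to prove only the first formula of \eqref{(3.8)}, namely $dp=pd+\frac{m}{n+2m-2}i\delta p$ on $S^m\tau'$, and to deduce the remaining three relations formally. Within each displayed line the second formula is the adjoint of the first: since $d$ and $-\delta$ are dual (Theorem~\ref{3.1}), $i$ and $j$ are dual, and $p,q$ are self-adjoint, taking adjoints of a relation valid on $S^m\tau'$ produces a relation on $S^{m+1}\tau'$, and the renaming $m+1\mapsto m$ converts the coefficient $\frac{m}{n+2m-2}$ into $\frac{m-1}{n+2m-4}$, which matches exactly the shift between the two lines. Finally, \eqref{(3.7)} is obtained from \eqref{(3.8)} through $p=E-q$, the correction terms $i\delta p$ and $pdj$ being common to both lines.

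To establish the key identity, I would first split $dp$ by means of $E=p+q$ and the first relation of \eqref{(3.2)}:
$$
dp=pdp+qdp=pd+qdp.
$$
Thus the entire problem reduces to computing the single term $qdp$. For this I would use the explicit formula $q=i(ji)^{-1}j$ from \eqref{(2.15)}. Applying \eqref{(3.6)} to a field of the form $pu$ and using $jp=0$ (as $p$ projects onto $\Ker j$), the summand $dj\,pu$ drops out, leaving
$$
jdp=\frac{2}{m+1}\delta p\qquad\text{on }S^m\tau'.
$$
Hence $qdp=\frac{2}{m+1}\,i(ji)^{-1}\delta p$, where here $(ji)^{-1}$ acts on $S^{m-1}\tau'$.

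The main obstacle is the apparently non-polynomial operator $(ji)^{-1}$, and the crux of the argument is to confine it to a subspace where $ji$ is scalar. I would observe that $\delta p$ in fact takes values in $\Ker j_{m-1}$: by the first relation of \eqref{(3.1)} together with $jp=0$, one has $j\delta p=\delta jp=0$. On $\Ker j_{m-1}$ the term $ij$ vanishes, so the case $k=1$ of Lemma~\ref{2.2}, i.e. \eqref{(2.11)} with $m$ replaced by $m-1$, reduces $ji$ to the scalar $\frac{2(n+2m-2)}{m(m+1)}E$; consequently $(ji)^{-1}$ is multiplication by $\frac{m(m+1)}{2(n+2m-2)}$ there. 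Substituting,
$$
qdp=\frac{2}{m+1}\cdot\frac{m(m+1)}{2(n+2m-2)}\,i\delta p=\frac{m}{n+2m-2}\,i\delta p,
$$
and combining with $dp=pd+qdp$ yields the desired formula. The deductions described in the first paragraph then complete the proof of all of \eqref{(3.7)}--\eqref{(3.8)}.
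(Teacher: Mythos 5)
Your proposal is correct, and every step checks out: the reduction $jdp=\frac{2}{m+1}\delta p$ follows from \eqref{(3.6)} together with $jp=0$; the observation $j\delta p=\delta jp=0$ correctly confines $(ji)^{-1}$ to $\Ker j_{m-1}$, where \eqref{(2.11)} (with $m$ replaced by $m-1$) makes $ji$ the scalar $\frac{2(n+2m-2)}{m(m+1)}$; and your duality bookkeeping is right, since an identity on $S^m\tau'$ dualizes to one on $S^{m+1}\tau'$ and the renaming $m+1\mapsto m$ produces exactly the coefficient $\frac{m-1}{n+2m-4}$ with the correct signs (the two factors of $-1$ from $d^*=-\delta$ and $\delta^*=-d$ cancel globally). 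Your route is, however, genuinely different from the paper's in how it reaches the analogue of the key intermediate identity. The paper proves the first formula of \eqref{(3.7)} head-on: it expands $dq=di(ji)^{-1}j$, derives a commutation formula for $d(ji)^{-1}$ by sandwiching the relation $jdi=\bigl(\frac{2}{m+3}\delta+\frac{m+1}{m+3}dj\bigr)i$ between two copies of $(ji)^{-1}$, and after several manipulations lands on $dq=qd-\frac{2}{m+1}i(ji)^{-1}\delta p$ before the final scalar evaluation. You instead prove the first formula of \eqref{(3.8)} by splitting $dp=pdp+qdp=pd+qdp$ via $E=p+q$ and the identity $pdp=pd$ of Lemma~\ref{3.2}, after which $qdp=i(ji)^{-1}jdp$ simplifies in one line. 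This is legitimate and non-circular, since Lemma~\ref{3.2} is proved from \eqref{(2.15)} and \eqref{(3.1)} alone, before Lemma~\ref{3.4}. Both arguments converge on the identical scalar computation $(ji)^{-1}\delta p=\frac{m(m+1)}{2(n+2m-2)}\delta p$ (the paper's \eqref{(3.10)}); what your decomposition buys is that it bypasses entirely the commutation of $d$ past $(ji)^{-1}$, which is the most laborious part of the paper's proof, at the modest cost of invoking one more previously established lemma.
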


\begin{proof}
 As above, the formulas are written in dual pairs
 and \eqref{(3.8)} is obtained from~\eqref{(3.7)} by the substitution $q=E-p$.
 Hence it suffices to prove the first of
 formulas~\eqref{(3.7)}.

By \eqref{(3.1)} and \eqref{(3.6)},
$$
jid=jdi=\left(\frac {2} {m+3}\delta+
\frac {m+1} {m+3}dj\right)i.
$$
The left and right multiplications of this formula by $(ji)^{-1}$
 yield
$$
d(ji)^{-1}=\frac {m+1} {m+3}(ji)^{-1}d+
\frac {2} {m+3}(ji)^{-1}\delta i(ji)^{-1}.
$$
In virtue of \eqref{(2.15)} and \eqref{(3.6)}, this gives
$$
\begin{aligned}
dq=di(ji)^{-1}j&=id(ji)^{-1}j=
i\Big(\frac {m-1} {m+1}(ji)^{-1}d+
\frac {2} {m+1}(ji)^{-1}\delta i(ji)^{-1}\Big)j\\
&=
\frac {m-1} {m+1}i(ji)^{-1}\Big(
\frac {m+1} {m-1}jd-\frac {2} {m-1}\delta\Big)
+\frac {2} {m+1}i(ji)^{-1}\delta q\\
&=
qd-\frac {2} {m+1}i(ji)^{-1}\delta(E-q),
\end{aligned}
$$
i.e.,
\begin{equation}
dq=qd-\frac {2} {m+1}i(ji)^{-1}\delta p.
            \label{(3.9)}
\end{equation}

The equality $jp=0$ follows from the definition of~$p$.
 According to~\eqref{(2.11)} and~\eqref{(3.1)},
 this implies
$$
(ji)\delta p=
\frac {2(n+2m-2)} {m(m+1)}\delta p+
\frac {(m-1)(m-2)} {m(m+1)}ij\delta p=
\frac {2(n+2m-2)} {m(m+1)}\delta p.
$$
Hence the equality
\begin{equation}
(ji)^{-1}\delta p=
\frac {m(m+1)} {2(n+2m-2)}\delta p
            \label{(3.10)}
\end{equation}
holds on $C^\infty(S^m\tau')$. Substitution of~\eqref{(3.10)}
 into~\eqref{(3.9)} implies the first
 formula of~\eqref{(3.7)}.
\end{proof}

\bigskip

Let us now demonstrate how does Theorem~\ref{1.1} imply Corollary~\ref{1.2}.
 Let~$u$ and~$v$ satisfy the hypotheses of Corollary~\ref{1.2}, and
 let  $\tilde u=pu$. Then
\begin{equation}
j\tilde u=0.
            \label{(3.11)}
\end{equation}
Applying the operator~$p$ to~\eqref{(1.2)}, we obtain $pdu=0$.
 Transformation of the left-hand side of this equality
 by~\eqref{(3.8)} implies
$$
\left(d-\frac {m} {n+2m-2}i\delta\right)\tilde u=0.
$$
We denote $\tilde v=\frac{m}{n+2m-2}\delta\tilde u$ and rewrite the last formula as
\begin{equation}
d\tilde u=i\tilde v.
            \label{(3.12)}
\end{equation}
As is seen from~\eqref{(2.15)}, the operator $p=E-q$ is
 represented in coordinates by a matrix whose elements are
 rational functions of the components~$g_{ij}$ of the metric
 tensor. Therefore conditions~\eqref{(1.4)} imply
 the similar conditions for $\tilde u=pu$:
\begin{equation}
\tilde u(x_0)=0,\quad{\nabla}\tilde u(x_0)=0,\quad\dots,\quad
{\nabla}^l\tilde u(x_0)=0.
            \label{(3.13)}
\end{equation}
According to \eqref{(3.11)}--\eqref{(3.13)}, ~$\tilde u$
 satisfies the hypotheses of Theorem~\ref{1.1}. Assuming the theorem to be
 valid, we obtain $\tilde u=pu=0$. This means the existence of
 $w$ such that~$u=iw$. Theorem~\ref{1.3} implies
 Corollary~\ref{1.4} in a similar way.

\section{Proof of Theorem \ref{1.3}}

According to Theorem~\ref{1.1} whose proof will be given below,
 Theorem~\ref{1.3} follows from a weaker statement formulated in

\begin{Lemma} \label{4.}
Let $\Gamma$~be a smooth hypersurface in a Riemannian manifold~$M$.
 If tensor fields $u\in C^\infty(S^m\tau')$ and
 $v\in C^\infty(S^{m-1}\tau')$
 satisfy the conditions
 $$
 du=iv,
 \quad ju=0,
 \quad u\bigr\vert_\Gamma=0
 $$
 then $u$ and~$v$ vanish on~$\Gamma$ together with all their derivatives.
\end{Lemma}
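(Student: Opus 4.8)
The plan is to argue locally near an arbitrary point of $\Gamma$ and to prove that every \emph{normal} derivative of $u$ vanishes on $\Gamma$; the tangential derivatives then vanish automatically, since a function that is identically zero on $\Gamma$ has vanishing tangential derivatives there. First I would eliminate $v$. Applying $p$ to $du=iv$ and using $p\,iv=0$ (because $\Ran i=\Ker p$) gives $pdu=0$, and then the first relation of~\eqref{(3.8)} together with $pu=u$ and $\delta pu=\delta u$ yields $du=\frac{m}{n+2m-2}\,i\delta u$. Since $i$ is a monomorphism (Lemma~\ref{2.3}), comparison with $du=iv$ forces
$$
v=\frac{m}{n+2m-2}\,\delta u .
$$
Thus $v$ is a first-order differential expression in $u$ with smooth coefficients, so it suffices to prove the assertion for $u$: once all derivatives of $u$ vanish on $\Gamma$, so do those of $v$. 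The cases $m=0,1$ are immediate (for $m=0$ one gets $du=0$, hence $u\equiv0$ near $\Gamma$), so assume $m\ge1$ and introduce semi-geodesic coordinates $(x^1,\dots,x^{n-1},x^n)$ with $\Gamma=\{x^n=0\}$, $g_{nn}\equiv1$, $g_{\alpha n}\equiv0$, and $\partial/\partial x^n$ the unit normal; Greek indices are tangential.

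The heart of the proof is to show that $\Gamma$ is \emph{non-characteristic} for the pair $\{pdu=0,\ ju=0\}$, i.e.\ that all components of the normal derivative $\partial_n u$ are linear combinations, with smooth coefficients, of the tangential derivatives $\partial_\alpha u$ and of $u$ itself. Let $\nu=dx^n$ be the unit conormal and set $c_m=\frac{m}{n+2m-2}$. The principal symbol of $P=d-c_m\,i\delta$ in a codirection $\xi$ is the algebraic map $s(\xi)w=i_\xi w-c_m\,i\,j_\xi w$, so that
$$
Pu=s(\nu)\,\partial_n u+\sum_\alpha s(dx^\alpha)\,\partial_\alpha u+Bu ,
$$
with $B$ a zeroth-order (purely algebraic) operator built from the Christoffel symbols. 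Since $Pu=0$, the term $s(\nu)\,\partial_n u$ equals a combination of tangential derivatives of $u$ and of $u$. Here $\partial_n u$ is not trace-free, but differentiating $ju=0$ gives $j(\partial_n u)=-(\partial_n g^{ab})\,u_{ab\,\cdots}$, which is algebraic in $u$; hence, using $q=i(ji)^{-1}j$ from~\eqref{(2.15)}, the part $q\,\partial_n u$ is known in terms of $u$, and it remains to recover the trace-free part $p\,\partial_n u\in\Ker j$. For this I would verify that $s(\nu)$ is injective on $\Ker j$: taking $g=\delta$ and $\nu=e_n$ and grouping the components of $w$ by their number of normal indices, the system $s(\nu)w=0$, $jw=0$ has only the solution $w=0$ when $n\ge2$ (this is the symbol injectivity that also underlies the ellipticity of $\delta pd$ on trace-free fields). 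A left inverse of $s(\nu)|_{\Ker j}$ then expresses $p\,\partial_n u$, hence all of $\partial_n u$, as a linear combination $\partial_n u=F(u,\partial_1u,\dots,\partial_{n-1}u)$ with smooth coefficients.

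With this relation in hand, I would conclude by induction on the order $k$ of normal differentiation. The base case is the hypothesis $u\bigr\vert_\Gamma=0$, which also gives $\partial_\alpha u\bigr\vert_\Gamma=0$. Assuming $\partial_n^{\,j}u\bigr\vert_\Gamma=0$ for all $j\le k$, I differentiate the identity $\partial_n u=F(u,\partial'u)$ exactly $k$ times in $x^n$; since $\partial_n$ commutes with the tangential derivatives and $F$ has smooth coefficients, $\partial_n^{\,k+1}u$ becomes a smooth-coefficient combination of terms $\partial_n^{\,j}\partial^{\mathrm{tang}}u$ with $j\le k$, each vanishing on $\Gamma$ because $\partial_n^{\,j}u\bigr\vert_\Gamma=0$ there. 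Hence $\partial_n^{\,k+1}u\bigr\vert_\Gamma=0$, and the induction closes; all derivatives of $u$, and therefore of $v$, vanish on $\Gamma$.

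The main obstacle is the middle step: the non-characteristicity, i.e.\ the injectivity of the symbol $s(\nu)$ on trace-free tensors. This is the algebraic core of the lemma and is exactly where both the dimension hypothesis $n\ge2$ and the interplay of the conformal Killing equation with the trace-free condition are indispensable — neither equation alone pins down $\partial_n u$, but together they do.
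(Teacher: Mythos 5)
Your argument is correct in substance but takes a genuinely different route from the paper's. The paper does not eliminate $v$: it keeps the pair $(u,v)$, applies $\partial^k/\partial y^k$ to $du=iv$ and $ju=0$ in boundary-adapted coordinates, sorts the components of the normal derivatives by the number of indices equal to $n$ (the tensors $z^{(s)},w^{(s)}$ of \eqref{(4.11)}), and inverts the resulting algebraic system by explicit recurrences \eqref{(4.12)}, \eqref{(4.14)}, \eqref{(4.17)}--\eqref{(4.18)}, concluding from the nondegeneracy of $E+\sum_l a(m,1,l-1)\,i^l j^l$ with positive coefficients. You instead eliminate $v$ at the outset --- your identity $v=\frac{m}{n+2m-2}\,\delta u$ is the paper's \eqref{(7.5)} --- and replace the explicit inversion by an abstract non-characteristicity claim: injectivity on $\Ker j$ of the boundary symbol $s(\nu)w=i_\nu w-\frac{m}{n+2m-2}\,i\,j_\nu w$. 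That claim is the entire nontrivial content of the lemma, and you only sketch it; but note that for $w\in\Ker j$ one has $s(\nu)w=p\,i_\nu w$ (combine Lemma~\ref{5.3} with \eqref{(5.13)}), so your injectivity statement is word-for-word Lemma~\ref{5.2} of the paper, proved there by the positivity computation \eqref{(5.10)}--\eqref{(5.13)}. Since Lemma~\ref{5.2} is pointwise algebra whose proof uses nothing from Section~4, importing it here is non-circular, and the rest of your argument then closes cleanly: $q\,\partial_n u$ is algebraic in $u$ (differentiate $ju=0$), a smooth left inverse such as $\bigl(s(\nu)^*s(\nu)\bigr)^{-1}s(\nu)^*$ recovers $p\,\partial_n u$, and the induction on the order of normal differentiation is routine. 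What each approach buys: yours is shorter and modular --- one algebraic lemma serves both this result and the ellipticity of $\delta pd$ in Theorem~\ref{5.1} --- while the paper's Section~4 is self-contained and amounts to a by-hand inversion of the same algebraic system (your ``grouping by the number of normal indices'' is precisely the $z^{(s)}/w^{(s)}$ bookkeeping, and carrying out your sketched verification from scratch would essentially reproduce those recurrences). Two small corrections: ``the cases $m=0,1$ are immediate'' should say $m=0$ only --- your main argument covers all $m\ge1$ uniformly, and for $m=1$ the trace condition is vacuous yet Lemma~\ref{5.2} still applies (at $n=2$, $m=1$ the second coefficient in its final identity degenerates to zero, but $|\xi|^2|f|^2=0$ alone forces $f=0$); and the lemma asserts vanishing of all derivatives of both $u$ and $v$ on $\Gamma$, which your reduction $v=\frac{m}{n+2m-2}\,\delta u$ indeed delivers once all derivatives of $u$ vanish there.
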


\begin{proof}
The statement is trivial for $m=0$.
  Let $m\ge 1$. We prove by induction in~$k$
  the validity of the following statement:
$$
\begin{aligned}
&u|_\Gamma=0,\ {\nabla}u|_\Gamma=0,\ \dots, {\nabla}^ku|_\Gamma=0,\\
&v|_\Gamma=0,\ {\nabla}v|_\Gamma=0,\ \dots, {\nabla}^{k-1}v|_\Gamma=0.
\end{aligned}
$$
For $k=0$, the statement coincides with the hypothesis  $u\bigr\vert_\Gamma=0$.
 Assume the required statement to be true for some $k\ge 0$.

We choose a coordinate system
 $(x^1,\dots,x^n)=(x'{}^1,\dots,x'{}^{n-1},y)$ in some neighborhood of $x_0\in\Gamma$  so
 that~$\Gamma$ is defined by the equation $y=0$ and
  $g_{in}=\delta_{in}$.
 Here and below  $(\delta_{ij})$~is the Kronecker tensor.
By the induction hypothesis,
\begin{equation}
 \begin{aligned}
 \frac{\partial^l}{\partial y^l}
 \partial^\beta_{x'}u_{i_1\dots i_m}\biggr\vert_{y=0}=0
 &\ \ \mbox{for}\ \ l\leq k,\\
 \noalign{\vskip7pt}
 \frac{\partial^l}{\partial y^l}
 \partial^\beta_{x'}v_{i_1\dots i_{m-1}}\biggr\vert_{y=0}=0
 &\ \ \mbox{for}\ \ l\leq k-1
 \end{aligned}
 \label{(4.1)}
 \end{equation}
  for an arbitrary $(n-1)$-variate index $\beta$.

The equality $du=iv$ has the following form in the chosen coordinates:
  \begin{align*}
 &\nabla_{i_1}u_{i_2\dots i_{m+1}}+
 \nabla_{i_2}u_{i_1i_3\dots i_{m+1}}+\dots+
 \nabla_{i_{m+1}}u_{i_1\dots i_m}\\
 &\qquad\qquad=(m+1)\sigma(i_1\dots i_{m+1})
 \big(g_{i_1i_2}v_{i_3\dots i_{m+1}}
 \big).
 \end{align*}
  Applying the operator
 $\frac{\partial^k}{\partial y^k}\bigr\vert_{y=0}$
  to this equality and taking~\eqref{(4.1)} into account, we obtain
 \begin{align}
 &\frac{\partial^k}{\partial y^k}
 \bigg(\frac{\partial u_{i_2\dots i_{m+1}}}
            {\partial x^{i_1}}+\dots+
       \frac{\partial u_{i_1\dots i_{m}}}
            {\partial x^{i_{m+1}}}
 \bigg)\biggr\vert_{y=0}\nonumber\\
 &\qquad =(m+1)\sigma(i_1\dots i_{m+1})
 \bigg(g_{i_1i_2}\frac{\partial^k v_{i_3\dots i_{m+1}}}
                      {\partial y^k}\biggr\vert_{y=0}
 \bigg).\label{(4.2)}
 \end{align}

Hereafter we use the following agreement:
  Greek indices vary from~1 to~$n-1$, and the summation from 1 to~$n-1$ is assumed over repeated Greek indices.
Set $(i_1,\dots,i_{m+1})=(\alpha_1,\dots,\alpha_{m+1})$
  in~\eqref{(4.2)}. Then the left-hand side of~\eqref{(4.2)} equals zero by~\eqref{(4.1)}
  and we obtain
\begin{equation}
\sigma(\alpha_1\dots \alpha_{m+1})
\left(g_{\alpha_1\alpha_2}\left.
\frac {\partial^k v_{\alpha_3\dots \alpha_{m+1}}}{\partial y^k}
\right|_{y=0}\right)=0.
            \label{(4.3)}
\end{equation}

We rewrite \eqref{(4.2)} in the form
 \begin{align}
 &\frac{\partial^k}{\partial y^k}
 \bigg(\frac{\partial u_{i_2\dots i_{m+1}}}
           {\partial x^{i_1}}+\dots+
      \frac{\partial u_{i_1\dots i_{m}}}
           {\partial x^{i_{m+1}}}
 \bigg)\biggr\vert_{y=0}\nonumber\\
 &\qquad=\frac1{m!}
        \sum_{\pi\in\Pi_{m+1}}g_{i_{\pi(1)}i_{\pi(2)}}
        \frac{\partial^k v_{i_{\pi(3)}\dots i_{\pi(m+1)}}}
             {\partial y^k}\biggr\vert_{y=0}.
 \label{(4.4)}
 \end{align}
Let $0\le s\le m$. We set
  $(i_1,\dots,i_{m-s})=(\alpha_1,\dots,\alpha_{m-s})$ and
 $i_{m-s+1}=\dots=i_{m+1}=n$ in~\eqref{(4.4)}.
 By~\eqref{(4.1)}, the first~$m-s$ summands
 on the left-hand side of~\eqref{(4.4)} are equal to zero and
 the last~$s+1$ summands coincide, i.e.,
\begin{equation}
\frac {\partial^k} {\partial y^k}
\left.\left(
\frac {\partial u_{i_2\dots i_{m+1}}} {\partial x^{i_1}}
+\dots+
\frac {\partial u_{i_1\dots i_{m}}} {\partial x^{i_{m+1}}}
\right)\right|_{y=0}
=
(s+1)
\left.\frac {\partial^{k+1}
u_{\alpha_1\dots\alpha_{m-s}n\dots n}} {\partial y^{k+1}}
\right|_{y=0}.
            \label{(4.5)}
\end{equation}
Let us analyze the right-hand side of~\eqref{(4.4)} for the chosen indices.
 If $\pi(1)\leq m-s$ and $\pi(2)>m-s$ then $g_{i_{\pi(1)}i_{\pi(2)}}=0$.
 Similarly,  $g_{i_{\pi(1)}i_{\pi(2)}}=0$ if  $\pi(1)>m-s$ and $\pi(2)\leq m-s$. Therefore
 \begin{align}
 &\hskip-5mm
 \sum_{\pi\in\Pi_{m+1}}g_{i_{\pi(1)}i_{\pi(2)}}
 \frac{\partial^k v_{i_{\pi(3)}\dots i_{\pi(m+1)}}}
     {\partial y^k}\biggr\vert_{y=0}\nonumber\\
 &\quad=\hskip-5mm
 \sum_{\pi\in\Pi_{m+1}(s)}
 \frac{\partial^k v_{i_{\pi(3)}\dots i_{\pi(m+1)}}}
     {\partial y^k}\biggr\vert_{y=0}+
 \hskip-2mm
 \sum_{\pi\in\Pi'_{m+1}(s)}g_{i_{\pi(1)}i_{\pi(2)}}
 \frac{\partial^k v_{i_{\pi(3)}\dots i_{\pi(m+1)}}}
     {\partial y^k}\biggr\vert_{y=0},
 \label{(4.6)}
 \end{align}
 where
  \begin{align*}
 \Pi_{m+1}(s) &=\big\{\pi\in\Pi_{m+1}\mid\pi(1)>m-s,\ \pi(2)>m-s
                \big\},\\
 \Pi'_{m+1}(s)&=\big\{\pi\in\Pi_{m+1}\mid\pi(1)\leq m-s,\ \pi(2)\leq m-s
               \big\}.
 \end{align*}
All the summands of the first sum on the right-hand side
 of~\eqref{(4.6)} coincide because~$v$ is symmetric.
 And the total amount of the summands is
$ (m-1)!s(s+1) $, i.e.,
\begin{equation}
\sum\limits_{\pi\in\Pi_{m+1}(s)}
\left.
\frac {\partial^k v_{i_{\pi(3)}\dots i_{\pi(m+1)}}
} {\partial y^k}
\right|_{y=0}
=
(m-1)!s(s+1)
\left.
\frac {\partial^k
v_{\alpha_1\dots\alpha_{m-s}n\dots n}}{\partial y^k}
\right|_{y=0}.
            \label{(4.7)}
\end{equation}
For $s=0$, the right-hand side of~\eqref{(4.7)} is equal to zero due
  to the factor~$s$.

The second sum on the right-hand side of~\eqref{(4.6)} is
 obviously equal to
\begin{equation}
c(m,s)\sigma(\alpha_1\dots\alpha_{m-s})
\left(g_{\alpha_1\alpha_2}
\left.
\frac {\partial^k
v_{\alpha_3\dots\alpha_{m-s}n\dots n}}{\partial y^k}
\right|_{y=0}\right),
            \label{(4.8)}
\end{equation}
where $c(m,s)=(m-1)!(m-s)(m-s-1)$~is total amount of elements
 in~$\Pi'_{m+1}(s)$.
Substitute~\eqref{(4.7)} and \eqref{(4.8)} into~\eqref{(4.6)} to obtain
  \begin{align}
 &\hskip-5mm\frac1{(m-1)!}
 \sum_{\pi\in\Pi_{m+1}}g_{i_{\pi(1)}i_{\pi(2)}}
 \frac{\partial^k v_{i_{\pi(3)}\dots i_{\pi(m+1)}}}
     {\partial y^k}\biggr\vert_{y=0} 
 =s(s+1)
 \frac{\partial^kv_{\alpha_1\dots\alpha_{m-s}n\dots n}}
     {\partial y^k}\biggr\vert_{y=0}\nonumber\\
 &\hskip-5mm\quad\qquad+(m-s)(m-s-1)\sigma(\alpha_1\dots\alpha_{m-s})
 \Bigg(g_{\alpha_1\alpha_2}
      \frac{\partial^kv_{\alpha_3\dots\alpha_{m-s}n\dots n}}
           {\partial y^k}\biggr\vert_{y=0}
 \Bigg).
 \label{(4.9)}
 \end{align}
Next, we substitute \eqref{(4.5)} and~\eqref{(4.9)} into~\eqref{(4.4)}
 \begin{align}
 &\hskip-5mm\frac{\partial^{k+1}u_{\alpha_1\dots\alpha_{m-s}n\dots n}}
                {\partial y^{k+1}}\biggr\vert_{y=0} 
 =\frac{s}{m}
 \frac{\partial^kv_{\alpha_1\dots\alpha_{m-s}n\dots n}}
      {\partial y^{k}}\biggr\vert_{y=0}\nonumber\\
 &\hskip-5mm\quad\qquad+
  \frac{(m-s)(m-s-1)}
       {m(s+1)}\sigma(\alpha_1\dots\alpha_{m-s})
 \Bigg(g_{\alpha_1\alpha_2}
      \frac{\partial^kv_{\alpha_3\dots\alpha_{m-s}n\dots n}}
           {\partial y^k}\biggr\vert_{y=0}
 \Bigg).
 \label{(4.10)}
 \end{align}

 We define the tensor fields
 $$
 z^{(s)}\in C^\infty(S^{m-s}\tau'_\Gamma)\ \ (0\le s\le m),
 \quad
 w^{(s)}\in C^\infty(S^{m-s}\tau'_\Gamma)\ \ (1\le s\le m)
 $$
 on $\Gamma$ as follows:
\begin{equation}
z^{(s)}_{\alpha_1\dots\alpha_{m-s}}=
\left.\frac {\partial^{k+1}u_{\alpha_1\dots\alpha_{m-s}n\dots n}}
{\partial y^{k+1}}\right|_{y=0},\quad
w^{(s)}_{\alpha_1\dots\alpha_{m-s}}=
\left.\frac {\partial^{k}v_{\alpha_1\dots\alpha_{m-s}n\dots n}}
{\partial y^{k}}\right|_{y=0}.
            \label{(4.11)}
\end{equation}
For convenience, we also define
 $w^{(0)}=0$, $w^{(m+1)}=0$, and $w^{(m+2)}=0$. Then~\eqref{(4.10)}
 can be written in the coordinate-free form
\begin{equation}
z^{(s)}=\frac {s} {m}w^{(s)}+
\frac {(m-s)(m-s-1)} {m(s+1)}iw^{(s+2)}
\quad (0\leq s\leq m),
            \label{(4.12)}
\end{equation}
and~\eqref{(4.3)} can be written as $iw^{(1)}=0$.
 Since $i$~ is a monomorphism, this implies
\begin{equation}
w^{(1)}=0.
            \label{(4.13)}
\end{equation}

If $m=1$ then $v$~is a scalar function and
 \eqref{(4.13)} gives
 $\frac{\partial^kv}{\partial y^{k}}\bigr\vert_{y=0}=0$.
 Equation~\eqref{(4.12)} implies
$$
z_\alpha^{(0)}=
\left.\frac {\partial^{k+1}u_\alpha}
{\partial y^{k+1}}\right|_{y=0}=0,\quad
 \left.\frac {\partial^{k+1}u_n}
{\partial y^{k+1}}\right|_{y=0}=z^{(1)}=w^{(1)}=0.
$$
This justifies the induction step in the case of~$m=1$.
 Therefore, we assume $m\ge 2$ in the rest of the proof.

In the chosen coordinates, the equation $ju=0$ is written as follows:
$$
u_{i_1\dots i_{m-2}nn}+g^{\beta\gamma}
u_{\beta\gamma i_1\dots i_{m-2}}=0.
$$
Differentiating this identity $k+1$ times with respect to $y$, we obtain
$$
\left.\frac {\partial^{k+1}u_{i_1\dots i_{m-2}nn}}
{\partial y^{k+1}}\right|_{y=0}+
g^{\beta\gamma}\left.
\frac {\partial^{k+1}u_{\beta\gamma i_1\dots i_{m-2}}}
{\partial y^{k+1}}\right|_{y=0}=0.
$$
We set $ (i_1,\dots,i_{m-s})=(\alpha_1,\dots,\alpha_{m-s}) $ and
$ i_{m-s+1}=\dots i_{m-2}=n $ in the last formula to obtain
$$
z^{(s)}+jz^{(s-2)}=0\quad (2\leq s\leq m).
$$
This implies
\begin{equation}
z^{(2s)}=(-j)^sz^{(0)}\quad \mbox{for}\quad 0\leq 2s\leq m,
\quad
z^{(2s+1)}=(-j)^sz^{(1)}\quad\mbox{for}\quad
0\leq 2s+1\leq m.
                                           \label{(4.14)}
\end{equation}

 Setting $s=m$ and then  $s=m-1$ in~\eqref{(4.12)}, we get
$$
w^{(m)}=z^{(m)},\quad w^{(m-1)}=\frac {m} {m-1}z^{(m-1)}.
$$
 Taking~\eqref{(4.14)} into account, this implies
\begin{equation}
 \begin{aligned}
 &w^{(2m')}=(-j)^{m'}z^{(0)}
   &&\mbox{for}\ \ m=2m',\\
 &w^{(2m')}=\frac{2m'+1}{2m'}(-j)^{m'}z^{(0)}
   &&\mbox{for}\ \ m=2m'+1,
 \end{aligned}
 \label{(4.15)}
 \end{equation}
 \vskip-7pt
 \begin{equation}
 \begin{aligned}
 &w^{(2m'-1)}=\frac{2m'}{2m'-1}(-j)^{m'-1}z^{(1)}
   &&\mbox{for}\ \ m=2m',\\
 &w^{(2m'+1)}=(-j)^{m'}z^{(1)}
   &&\mbox{for}\ \ m=2m'+1.
 \end{aligned}
 \label{(4.16)}
 \end{equation}

Now, we are going to prove the representations
 \begin{alignat}2
 w^{(2s)}
 &=(-1)^s
 \sum_{l=0}^{[m/2]-s}
 a(m,s,l)i^lj^{s+l}z^{(0)}
  &&\ \ \mbox{for}\ \ 0<2s\leq m,
 \label{(4.17)}\\
  w^{(2s+1)}
 &=(-1)^s\sum_{l=0}^{[m-1/2]-s}
 b(m,s,l)i^lj^{s+l}z^{(1)}
  &&\ \ \mbox{for}\ \ 0<2s+1\leq m,
 \label{(4.18)}
 \end{alignat}
 with some
 {\it positive\/}
 coefficients~$a(m,s,l)$ and~$b(m,s,l)$,
 where~$[\boldsymbol{\cdot}]$ denotes, as usual,
 the integer part of a number.
 For $0\le m-2s\le 1$, formula~\eqref{(4.17)}
coincides with~\eqref{(4.15)}. We shall prove~\eqref{(4.17)}
   by induction in~$m-2s$.
   Let~$s>0$ and $m-2s\ge 2$. If we take $s:=2s$ in~\eqref{(4.12)} then we get
$$
z^{(2s)}=\frac {2s} {m}w^{(2s)}+
\frac {(m-2s)(m-2s-1)} {m(2s+1)}iw^{(2s+2)}.
$$
We express $ w^{(2s)} $ from this equation
$$
w^{(2s)}=\frac {m} {2s}z^{(2s)}-
\frac {(m-2s)(m-2s-1)} {2s(2s+1)}iw^{(2s+2)}.
$$
We replace the first term on the right-hand side by its value \eqref{(4.14)} and replace the second term by its expression from the inductive hypothesis
\eqref{(4.17)}
$$
w^{(2s)}=(-1)^s\Big(\frac {m} {2s}j^sz^{(0)}+
\frac {(m-2s)(m-2s-1)} {2s(2s+1)}
\sum\limits_{l=0}^{\left[\frac {m} {2}\right]-s-1}
a(m,s+1,l)i^{l+1}j^{s+l+1}z^{(0)}\Big).
$$
Changing the summation index, we transform this expression to the form
$$
w^{(2s)}=(-1)^s\Big(\frac {m} {2s}j^sz^{(0)}+
\frac {(m-2s)(m-2s-1)} {2s(2s+1)}
\sum\limits_{l=1}^{\left[\frac {m} {2}\right]-s}
a(m,s+1,l-1)i^{l}j^{s+l}z^{(0)}\Big).
$$
This is equivalent to~\eqref{(4.17)} with
$$
a(m,s,l)=\left\{
\begin{array}{l}
m/2s\quad \mbox{for}\quad l=0,\\
[0.3cm]
\frac {(m-2s)(m-2s-1)} {2s(2s+1)}a(m,s+1,l-1)
\quad \mbox{for}\quad l\geq 1.
\end{array}\right.
$$
Thus,  representation~\eqref{(4.17)} is proved.
 The proof of~\eqref{(4.18)} is quite similar.

Set $ s=0 $ in \eqref{(4.12)}
\begin{equation}
z^{(0)}=(m-1)iw^{(2)}.
            \label{(4.19)}
\end{equation}
By \eqref{(4.17)},
$$
w^{(2)}=-
\sum\limits_{l=0}^{\left[\frac {m} {2}\right]-1}
a(m,1,l)i^{l}j^{l+1}z^{(0)}.
$$
Substitution of the last expression into~\eqref{(4.19)} gives
\begin{equation}
\Big[E+
(m-1)\sum\limits_{l=1}^{[m/2]}
a(m,1,l-1)i^{l}j^{l}\Big]z^{(0)}=0,
            \label{(4.20)}
\end{equation}
where $E$~is the identity operator.
The operator in the  brackets is nondegenerate  since the coefficients of the sum are positive
  and the operator $i^lj^l$~is nonnegative.
  Hence,~\eqref{(4.20)} implies $z^{(0)}=0$.
  So, according to~\eqref{(4.14)} and~\eqref{(4.17)},
  $z^{(2s)}=0$ and $w^{(2s)}=0$ for all~$s$.

By \eqref{(4.13)}, $w^{(1)}=0$.
 On the other hand, setting~$s=0$ in~(4.18), we see
$$
w^{(1)}=
\Big[\sum\limits_{l=0}^{\left[\frac {m-1} {2}\right]}
b(m,0,l)i^{l}j^{l}\Big]z^{(1)}=0.
$$
Since the operator in the brackets is nondegenerate,
  $z^{(1)}=0$. Together with~\eqref{(4.14)} and~\eqref{(4.18)}, this gives
  $z^{(2s+1)}=0$ and $w^{(2s+1)}=0$ for all~$s$.

We have proved $z^{(s)}=0$ and $w^{(s)}=0$ for
  all~$s$.  Recalling definition~\eqref{(4.11)}, we see
$$
\left.\frac {\partial^{k+1}u_{i_1\dots i_m}}
{\partial y^{k+1}}\right|_{y=0}=0,\quad
\left.\frac {\partial^{k}v_{i_1\dots i_m}}
{\partial y^{k}}\right|_{y=0}=0,
$$
and this is the finish of the inductive step.
\end{proof}

\section{Proof of Theorem \ref{1.5}}

We start with the following observation: the tensor fields $\lambda$ and $\tilde f$ can be eliminated from
  the system~\eqref{(1.5)}--\eqref{(1.7)}. Indeed, let
  $f,\tilde f\in H^k(S^m\tau')$, $v\in H^{k+1}(S^{m-1}\tau')$, and
  $\lambda\in H^k(S^{m-2}\tau')$ satisfy~\eqref{(1.5)}--\eqref{(1.7)}.
  Applying the operator~$j$ to~\eqref{(1.5)}, we get
$$
jf=jdv+ji\lambda.
$$
Express $ \lambda $ from this
\begin{equation}
\lambda=(ji)^{-1}j(f-dv),
            \label{(5.1)}
\end{equation}
and substitute the result into \eqref{(1.5)}
$$
f=dv+i(ji)^{-1}j(f-dv)+\tilde f.
$$
Due to~\eqref{(2.15)}, this equality can be written
 in the form
$$
f=dv+q(f-dv)+\tilde f
$$
or
$$
(E-q)f=(E-q)dv+\tilde f,
$$
where $ E $ is the identity operator. Since $ E-q=p $,
\begin{equation}
pf=pdv+\tilde f.
            \label{(5.2)}
\end{equation}
To eliminate $ \tilde f $, we apply the operator $ \delta $ to equation \eqref{(5.2)}. Taking $ \delta \tilde f=0 $ into account, we obtain
$$
\delta pf=\delta pdv.
$$
Hence, $v$ is a solution to the boundary value problem
\begin{equation}
(\delta pd)v=h,\quad v|_{\partial M}=0
            \label{(5.3)}
\end{equation}
with
\begin{equation}
h=\delta pf\in H^{k-1}(S^{m-1}\tau').
            \label{(5.4)}
\end{equation}

Recall that the subbundle $\Ker j$ of the vector
 bundle~$S^*\tau'$ was defined in~Section~2. The right-hand side~$h$
 of equation~\eqref{(5.3)} belongs to $H^{k-1}(\Ker j)$ by~\eqref{(5.4)} and~\eqref{(3.1)}.
The desired solution~$v$ to problem~\eqref{(5.3)} must be a
 section of~$\Ker j$ since the requirement $jv=0$ is contained in~\eqref{(1.6)}. Finally, $\delta pd$ can be
 considered as a differential operator on the vector
 bundle~$\Ker j$, i.e.,
$$
\delta pd:C^\infty(\mbox{\rm Ker}\,j)\rightarrow C^\infty(\mbox{\rm Ker}\,j),
$$
since $q(\delta pd)=(q\delta p)d=0$ in view of~\eqref{(3.4)}.
  So,~\eqref{(5.3)} can be considered as a boundary value problem
  on the bundle~$\Ker j$.
 We shall prove this is an elliptic problem with  zero kernel and co-kernel.
 Then, applying the theorem on regular solvability
 of elliptic problems,
 we shall deduce that, for every $h\in H^k(\Ker j)$ $(k\ge 0)$, problem~\eqref{(5.3)} has a unique solution
 $v\in H^{k+2}(\Ker j)$ satisfying the stability estimate
$$
\|v\|_{H^{k+2}}\leq C\|h\|_{H^k}.
$$
Setting $h=\delta pf$ and defining~$\lambda$ and~$\tilde f$ by formulas~\eqref{(5.1)}
 and~\eqref{(5.2)}, we get \eqref{(1.5)}--\eqref{(1.8)}.
Theorem~\ref{1.5} is thus reduced to the following proposition:

\begin{Theorem} \label{5.1}
Let $M$ be a compact connected Riemannian manifold with nonempty boundary.
Being considered  on the vector bundle~$\Ker j$, the boundary value problem~\eqref{(5.3)} is
elliptic and has zero kernel and co-kernel.
\end{Theorem}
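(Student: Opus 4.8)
The plan is to regard $\delta pd$ as a second-order operator on the bundle $\Ker j$ (it does map $C^\infty(\Ker j)$ into itself, since $q\delta pd=(q\delta p)d=0$ by \eqref{(3.4)}), to show that it is strongly elliptic there, to check that the Dirichlet condition $v|_{\partial M}=0$ satisfies the complementing condition, and finally to dispose of the kernel and cokernel by Green's formula together with Theorem~\ref{1.3}. First I would compute the principal symbol. Since $d=\sigma\nabla$ and $\delta$ are first order while $p$ is algebraic, the principal symbol of $\delta pd$ at a covector $\xi\neq0$ is $-j_\xi\,p\,i_\xi$, where $i_\xi$ denotes symmetric multiplication by $\xi$ and $j_\xi$ its adjoint; the computation $q j_\xi p=0$ (the symbol of $q\delta p=0$) shows this is an endomorphism of the fibre $\Ker j_{m-1}$. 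Because $j_\xi=i_\xi^*$ and $p=p^*=p^2$, for $v\in\Ker j_{m-1}$ one gets
$$
\langle j_\xi p i_\xi v,\,v\rangle=\langle p i_\xi v,\,i_\xi v\rangle=|p i_\xi v|^2\ge0,
$$
so the symbol is a nonnegative self-adjoint endomorphism of $\Ker j_{m-1}$, and ellipticity is exactly the assertion that $p\,i_\xi$ is injective on $\Ker j_{m-1}$ for every $\xi\neq0$.

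The heart of the matter, and the step I expect to be the main obstacle, is this injectivity, which I would establish through the polynomial model of Section~2. Under the identification $\varkappa_x$, the fibre $\Ker j_{m-1}$ corresponds to the harmonic polynomials of degree $m-1$ (Lemma~\ref{2.4}), the operator $i=i_g$ to multiplication by $|\zeta|^2=g_{ij}\zeta^i\zeta^j$, and $i_\xi$ to multiplication by the linear form $\ell_\xi(\zeta)=\xi_i\zeta^i$; moreover $\Ran i$ corresponds to the polynomials divisible by $|\zeta|^2$, so $p$ is the orthogonal projection onto the harmonic part. Hence $p\,i_\xi v=0$ means that $\ell_\xi\cdot h$ is divisible by $|\zeta|^2$, where $h=\varkappa_x v\neq0$ is harmonic. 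For $n\ge3$ the complex quadric $\{|\zeta|^2=0\}$ is irreducible and is not contained in the hyperplane $\{\ell_\xi=0\}$; since $\ell_\xi\,h$ vanishes on the quadric, $h$ must vanish there too, i.e. $|\zeta|^2\mid h$, which forces the nonzero harmonic $h$ to be zero — a contradiction. For $n=2$ the same conclusion follows from the factorisation $|\zeta|^2=(\zeta^1+i\zeta^2)(\zeta^1-i\zeta^2)$ and coprimality. Thus the symbol is positive definite, so that $-\delta pd$ is a formally self-adjoint, strongly elliptic second-order operator on $\Ker j$.

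With strong ellipticity in hand, I would invoke the standard fact that for a second-order operator whose principal symbol is a positive-definite endomorphism the Dirichlet condition satisfies the Shapiro--Lopatinskii (complementing) condition: for each tangential $\xi'\neq0$, the solutions on the half-line of the constant-coefficient system with symbol $j_{\xi'+\tau\nu}\,p\,i_{\xi'+\tau\nu}$ that decay at infinity form a space whose dimension equals the fibre dimension, and evaluation at the boundary identifies it bijectively with the prescribed data. This makes \eqref{(5.3)} an elliptic boundary value problem, hence Fredholm between the relevant Sobolev spaces.

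Finally I would kill the kernel and cokernel. If $v\in C^\infty(\Ker j)$ solves $\delta pd\,v=0$ with $v|_{\partial M}=0$, then Green's formula (Theorem~\ref{3.1}) applied with $u=v$ and $w=pd v$ gives, since $i_\nu v|_{\partial M}=0$,
$$
\int_M|pd v|^2\,dV+\int_M\langle v,\delta pd\,v\rangle\,dV=0,
$$
whence $pd v=0$. Together with $jv=0$, formula \eqref{(3.8)} yields $dv=i\tilde v$ with $\tilde v=\tfrac{m-1}{n+2m-4}\,\delta v$, so $v$ is a trace-free conformal Killing field vanishing on the hypersurface $\partial M$; Theorem~\ref{1.3} forces $v\equiv0$, and the kernel is trivial. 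The cokernel vanishes for the same reason: $\delta pd$ is formally self-adjoint and the Dirichlet condition is self-adjoint, so the adjoint boundary value problem coincides with \eqref{(5.3)} and, by the identical argument, has only the zero solution.
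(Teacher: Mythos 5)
Your proposal is correct, and it shadows the paper's skeleton at most points --- principal symbol $-j_\xi p\, i_\xi$, nonnegativity via $\langle j_\xi p i_\xi f,f\rangle=|p i_\xi f|^2$, the Lopatinski{\u\i} condition deduced from positivity of the symbol (the paper cites exactly this fact from Taylor), and triviality of the kernel via Green's formula, \eqref{(3.8)} and Theorem~\ref{1.3} --- but it takes genuinely different routes at the two hardest steps. (i) For the injectivity of $p i_\xi$ on $\Ker j$, the paper argues purely algebraically (Lemmas~\ref{5.2} and~\ref{5.3}): the commutation formula $p i_\xi=i_\xi p-\frac{2}{m+1}i(ji)^{-1}j_\xi p$ together with \eqref{(5.11)}--\eqref{(5.13)} yields the identity $|\xi|^2|f|^2+m\bigl(1-\frac{2}{n+2m-2}\bigr)|j_\xi f|^2=0$, giving positivity with an explicit constant. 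Your polynomial-model argument --- $\Ker j$ corresponds under $\varkappa_x$ to harmonic polynomials, $\Ran i$ to multiples of $|\zeta|^2$, then primality of the ideal $(|\zeta|^2)$ in $\mathbb{C}[\zeta^1,\dots,\zeta^n]$ for $n\ge3$ and coprimality of the factors $\zeta^1\pm\sqrt{-1}\,\zeta^2$ for $n=2$, plus the fact that a nonzero harmonic polynomial is never divisible by $|\zeta|^2$ (directness of the decomposition, i.e.\ Lemma~\ref{2.3} read through $\varkappa_x$) --- is shorter and more conceptual; it does require normalizing $g_{ij}(x)=\delta_{ij}$ before complexifying, whereas the paper's computation stays elementary and quantitative. (ii) For the cokernel the divergence is larger: the paper avoids duality theory altogether and instead proves the laborious Lemma~\ref{5.4} --- an explicit construction, via the recursions \eqref{(5.29)}--\eqref{(5.30)} and the closed-form coefficients \eqref{(5.37)}--\eqref{(5.38)}, of a field $v$ with $v|_{\partial M}=0$, $jv=0$, and arbitrarily prescribed $j_\nu pdv|_{\partial M}$ --- and then shows that any $w$ orthogonal to the range satisfies $\delta pdw=0$ and, by that surjectivity of boundary data, $w|_{\partial M}=0$, so $w$ lies in the already-trivial kernel. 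Your self-adjointness shortcut is a legitimate standard replacement that saves several pages, but as written it is slightly underjustified: you should either verify through Theorem~\ref{3.1} that the boundary contributions to $(\delta pdu,v)_{L^2}-(u,\delta pdv)_{L^2}$ vanish under double Dirichlet data, so that the adjoint problem is again \eqref{(5.3)} itself, or, more cleanly, note that the form $(-\delta pdv,v)_{L^2}=\|pdv\|^2_{L^2}$ satisfies G{\aa}rding's inequality thanks to your positive-definite symbol, so the Dirichlet (Friedrichs) realization is self-adjoint with compact resolvent, hence of index zero with kernel equal to cokernel. Be aware that the step you dispatch as ``the identical argument'' --- recovering $w|_{\partial M}=0$ for a cokernel element --- is precisely what the paper's Lemma~\ref{5.4} exists to supply; the duality route is what entitles you to skip it.
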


 \begin{proof}
 We start
with checking ellipticity
 of the operator~$\delta pd$ on~$\Ker j$.
 The principal symbols~$\sigma_1(d)$ and $\sigma_1(\delta)$ of the
 operators~$d$ and~$\delta$ at a point~$(x,\xi)\in T'$ are
$$
\sigma_1(d)=\sqrt{-1}i_\xi\quad \sigma_1(\delta)=
\sqrt{-1}j_\xi;
$$
here $\sqrt{-1}$~is the imaginary unit. Hence,
$$
\sigma_2(\delta pd)=-j_\xi pi_\xi.
$$
For $x\in M$, let $\Ker^m_xj=\{f\in S^mT'_x\mid jf=0\}$.
  We have to prove the operator
\begin{equation}
j_\xi pi_\xi:\mbox{\rm Ker}^m_xj\rightarrow\mbox{\rm Ker}^m_xj
            \label{(5.5)}
\end{equation}
is an isomorphism for every $m\ge 0$ and every $0\ne\xi\in T'_x$.

The operator $j_\xi pi_\xi$ is easily seen to be
 nonnegative. Indeed,
$$
\langle j_\xi pi_\xi f,f\rangle =
\langle pi_\xi f,i_\xi f\rangle =
\langle pi_\xi f,pi_\xi f\rangle =
|pi_\xi f|^2.
$$
Therefore verification of the ellipticity of~$\delta pd$
  reduces to the following proposition.

\begin{Lemma} \label{5.2}
If a tensor $f\in S^mT'_x$ satisfies the conditions
 $$
 jf=0,
 \quad pi_\xi f=0
 $$
 for some $0\neq\xi\in T'_x$
 then $f=0$.
\end{Lemma}

To prove Lemma \ref{5.2}, we need the following:
\begin{Lemma} \label{5.3}
The commutation formula
$$
pi_\xi=i_\xi p-\frac {2} {m+1}i(ji)^{-1}j_\xi p
$$
holds on $ S^m\tau' $.
\end{Lemma}

\begin{proof} The commutation formula
\begin{equation}
ji_\xi=\frac {2} {m+1}j_\xi+\frac {m-1} {m+1}i_\xi j
\quad \mbox{on}\quad S^m\tau'
            \label{(5.6)}
\end{equation}
is checked by direct calculations in coordinates, and we omit
 them. Using~\eqref{(2.15)} and~\eqref{(5.6)}, we obtain
$$
qi_\xi=i(ji)^{-1}ji_\xi=i(ji)^{-1}(ji_\xi)=
i(ji)^{-1}\left(\frac {2} {m+1}j_\xi+
\frac {m-1} {m+1}i_\xi j\right).
$$
Hence,
\begin{equation}
qi_\xi=
i(ji)^{-1}\left(\frac {2} {m+1}j_\xi+
\frac {m-1} {m+1}i_\xi j\right)
\quad \mbox{on}\quad S^m\tau'.
            \label{(5.7)}
\end{equation}

Using \eqref{(5.6)} again, we get
$$
jii_\xi=(ji_\xi)i=
\left(\frac {2} {m+3}j_\xi+\frac {m+1} {m+3}i_\xi j\right)i=
\frac {2} {m+3}j_\xi i+\frac {m+1} {m+3}i_\xi (ji).
$$
 Multiplying the extreme parts of this formula by~$(ji)^{-1}$
  from the left and from the right, we obtain
$$
i_\xi(ji)^{-1}
=
\frac {2} {m+3}(ji)^{-1}j_\xi i(ji)^{-1}+
\frac {m+1} {m+3}(ji)^{-1}i_\xi.
$$
Hence,
\begin{equation}
(ji)^{-1}i_\xi=
\frac {m+3} {m+1}i_\xi(ji)^{-1}-
\frac {2} {m+1}(ji)^{-1}j_\xi i(ji)^{-1}
\quad \mbox{on}\quad S^m\tau'.
            \label{(5.8)}
\end{equation}

We transform the second summand on the right-hand side
  of~\eqref{(5.7)} with the htlp of~\eqref{(5.8)}. The summand equals zero in the case of $m=0$ and of
  $m=1$ due to the factor~$j$ on its right. Therefore we assume
  $m\ge 2$. Since the
  operator~$j$ acts before $(ji)^{-1}i_\xi$,
  the value~$m$ in~\eqref{(5.8)} should be changed to~$m-2$.
  Thus, the result of the transformation is as follows:
$$
qi_\xi=\frac {2} {m+1}i(ji)^{-1}j_\xi+\frac {m-1} {m+1}
i\left(\frac {m+1} {m-1}i_\xi(ji)^{-1}-
\frac {2} {m-1}(ji)^{-1}j_\xi i(ji)^{-1}\right)j.
$$
We arrange this formula as
$$
qi_\xi=i_\xi i(ji)^{-1}j+
\frac {2} {m+1}i(ji)^{-1}j_\xi(E-i(ji)^{-1}j)
$$
and use \eqref{(2.15)} again to obtain
\begin{equation}
qi_\xi=i_\xi q+\frac {2} {m+1}i(ji)^{-1}j_\xi p
\quad \mbox{on}\quad S^m\tau'.
            \label{(5.9)}
\end{equation}
We have thus proved~\eqref{(5.9)} in the case of $m\ge 2$.
 Actually, ~\eqref{(5.9)} is valid for any $m\ge 0$. Indeed,
 both sides of this formula are equal to zero in the case of $m=0$. In the case of $m=1$,
 \eqref{(5.9)} has the form
$$
qi_\xi=\frac {1} {n}ij_\xi
\quad \mbox{on}\quad \tau'
$$
and can be easily checked.

Substituting $q=E-p$ into~\eqref{(5.9)},
we complete the proof of  Lemma~\ref{5.3}.
\end{proof}

\bigskip

\begin{proof}[Proof of Lemma~$\ref{5.2}$]
  The statement of the lemma is trivial for $m=0$ since $pi_\xi f=i_\xi f$ in the latter case and~$i_\xi$ is a monomorphism
 for $\xi\neq 0$. So we assume $m\ge 1$.

 Let $f\in S^mT'_x$ satisfy the equalities $jf=0$ and $pi_\xi f=0$.
 By Lemma~\ref{5.3}, the second equality implies
$$
0=pi_\xi f=i_\xi pf-\frac {2} {m+1}i(ji)^{-1}j_\xi pf.
$$
Since $ pf=f $, this equality is simplified to the following one:
$$
i_\xi f-\frac {2} {m+1}i(ji)^{-1}j_\xi f=0.
$$
Taking the scalar product of this with $ i_\xi f $, we get
$$
\langle i_\xi f,i_\xi f\rangle
-\frac {2} {m+1}\langle i(ji)^{-1}j_\xi f,i_\xi f\rangle =0
$$
or
\begin{equation}
\langle j_\xi i_\xi f,f\rangle
-\frac {2} {m+1}\langle (ji)^{-1}j_\xi f,ji_\xi f\rangle =0.
            \label{(5.10)}
\end{equation}

The operators $i_\xi$ and $j_\xi$ satisfy the commutation formula
\begin{equation}
j_\xi i_\xi f=\frac {|\xi|^2} {m+1}f+
\frac {m} {m+1}i_\xi j_\xi f
\quad \mbox{for}\quad f\in S^m\tau'
            \label{(5.11)}
\end{equation}
(see~\cite[Lemma 3.3.3]{[mb]}).

 Since $ jf=0$, formula~\eqref{(5.6)} implies
\begin{equation}
ji_\xi f=\frac {2} {m+1}j_\xi f.
            \label{(5.12)}
\end{equation}

Using \eqref{(2.11)} and taking $ jf=0 $ into account, we deduce
$$
(ji)(j_\xi f)=\frac {2(n+2m-2)} {m(m+1)}j_\xi f.
$$
Applying the operator~$(ji)^{-1}$ to this equation, we infer
\begin{equation}
(ji)^{-1}(j_\xi f)=\frac {m(m+1)}{2(n+2m-2)} j_\xi f.
            \label{(5.13)}
\end{equation}

Substitute \eqref{(5.11)}--\eqref{(5.13)}
 into~\eqref{(5.10)} to obtain
$$
|\xi|^2|f|^2+m\left(1-\frac {2} {n+2m-2)}\right)
|j_\xi f|^2=0.
$$
Both coefficients of the equality are positive in the case of $ n\ge 2$, $m\ge 1 $,
 and $\xi\neq 0$. Hence,~$ f=0$.
 \end{proof}

\bigskip

We have thus proved the ellipticity of the
 principal symbol
 $j_\xi pi_\xi$ of the operator $-\delta pd $ on
 the bundle~$\Ker j$. Actually, we have shown
 the principal symbol is positive.
 This implies the ellipticity of the boundary value
 problem~\eqref{(5.3)}. Indeed, as is known \cite[Chapter 5, Proposition 11.10]{[Ta]}, the positivity of
 the principal symbol implies
 the Lopatinski{\u\i} condition for the Dirichlet problem.

Next, we are going to prove the triviality of the kernel of the boundary value problem~\eqref{(5.3)}. Let $ v\in H^k(\Ker j)$ $(k\ge 2)$ be a solution
 to the homogeneous problem
\begin{equation}
(\delta pd)v=0,\quad v|_{\partial M}=0.
            \label{(5.14)}
\end{equation}
Due to the ellipticity, $v$ is smooth: $v\in C^\infty(\Ker j)$.
 Applying Green's formula from Theorem~\ref{3.1}, we have
$$
(pdv,pdv)_{L^2}=(pdv,dv)_{L^2}=-(\delta pdv,v)_{L^2}=0,
$$
i.e., $ pdv=0$. Hence, $v$~is a trace-free
 conformal Killing
 field. According to Theorem~\ref{1.3}, if such a field satisfies
 the homogeneous boundary condition $v\bigr\vert_{\partial M}=0$ then it is identically zero.

 Let $S^m\tau'\bigr\vert_{\partial M}$ denote the restriction of the
 bundle~$S^m\tau'$ to the boundary.
 To prove the triviality of the co-kernel of the boundary value problem~\eqref{(5.3)}, we need the following proposition.

\begin{Lemma}  \label{5.4}
If a tensor field
 $u\in C^\infty\big(S^m\tau'\bigr\vert_{\partial M}\big)$
 satisfies the condition $ju=0$ then there exists
 $v\in C^\infty(S^m\tau')$ satisfying the conditions
 $v\bigr\vert_{\partial M}=0$$,$ $jv=0$ and such that
\begin{equation}
j_\nu pdv|_{\partial M}=u,
            \label{(5.15)}
\end{equation}
where $\nu$~is
 the outward
 normal vector to the boundary.
\end{Lemma}

The proof of Lemma~\ref{5.4} will be given below. We now finish the proof of Theorem~\ref{5.1} with the help of the lemma.

Assume a field $w\in C^\infty(\Ker j)$ to be orthogonal
 to the range of the operator of the boundary value problem~\eqref{(5.3)},
 i.e.,
\begin{equation}
(w,\delta pdv)_{L^2}=0
            \label{(5.16)}
\end{equation}
for every $v\in C^\infty(\Ker j)$ satisfying the boundary condition $v\bigr\vert_{\partial M}=0$.
 We have to show  $w\equiv 0$.
We first choose~$v$ such that $\supp v\subset M\setminus\partial M$.
 Green's formula and~\eqref{(5.16)} imply
$$
(\delta pdw,v)_{L^2}=(w,\delta pdv)_{L^2}=0.
$$
Since $v\in C^\infty_0(\Ker j)$ is arbitrary, this means
\begin{equation}
\delta pdw=0.
            \label{(5.17)}
\end{equation}

For an arbitrary
 $u\in C^\infty\big(\Ker j\bigr\vert_{\partial M}\big)$,
 Lemma~\ref{5.4} guaranties the existence of
 some $v\in C^\infty(\Ker j)$
 which satisfies~\eqref{(5.15)} and vanishes on the boundary.
 With the help of Green's formula, \eqref{(5.15)}--\eqref{(5.17)} yield
$$
0=(w,\delta pdv)_{L^2}=
(\delta pdw,v)_{L^2}+
\int\limits_{\partial M}\langle w,j_\nu pdv\rangle dV'=
\int\limits_{\partial M}\langle w,u\rangle dV'.
$$
This means  $w\bigr\vert_{\partial M}=0$ since $u$ is arbitrary.
 So,~$w$ belongs to the kernel of the boundary problem operator.
 As was already proved, such~$w$ must be identically equal to zero.
 Theorem~\ref{5.1} is proved.
 \end{proof}

\bigskip

\begin{proof}[Proof of Lemma~$\ref{5.4}$]
 In order to simplify the notation, we give here the proof
 only  in the case of an odd~$m$.
  The case of an even~$m$ is considered in a similar way.
  Both the cases can be considered simultaneously but with much more complicated notation.

In virtue of the condition $v\bigr\vert_{\partial M}=0$, \eqref{(5.15)} can be considered as an algebraic equation in the unknown $ \partial v/\partial\nu\bigr\vert_{\partial M}$.
 We are going to prove the existence and uniqueness of a solution to the equation under the condition $ ju=\nobreak0$. Moreover, the solution satisfies
 $j\partial v/\partial\nu\bigr\vert_{\partial M}=0$ as will be shown.
  Then the proof of the existence is realized by choosing a section~$v$ of the
 vector bundle~$\Ker j$ with prescribed boundary values $v\bigr\vert_{\partial M}=0$ and
 $\partial v/\partial\nu\bigr\vert_{\partial M}$.

We choose
 normal boundary coordinates
 $(x^1,\dots,x^n)=(x^1,\allowbreak\dots,x^{n-1},y) $
 in a neighborhood of a boundary point
 so that $g_{in}=\delta_{in}$ and the boundary is defined by the
 equation $y=0$.
 Below the Greek indices change from~1 to~$n-1$.
 We define the tensor fields
 $u^{(s)},v^{(s)}\in C^\infty(S^s\tau'_{\partial M})$
 for $0\le s\le 2m+1$ by the formulas
\begin{equation}
v^{(s)}_{\alpha_1\dots\alpha_s}=
\left.\frac {\partial v_{\alpha_1\dots\alpha_sn\dots n}}
{\partial y}\right|_{y=0},\quad
u^{(s)}_{\alpha_1\dots\alpha_s}=2(m+1)(n+4m)
u_{\alpha_1\dots\alpha_sn\dots n} .
            \label{(5.18)}
\end{equation}
The condition $ju=0$ is expressed in terms of~$u^{(s)}$ as follows:
$$
u^{(s)}+ju^{(s+2)}=0.
$$
From this,
\begin{equation}
u^{(2s)}=(-j)^{m-s}u^{(2m)},\quad
u^{(2s+1)}=(-j)^{m-s}u^{(2m+1)}.
            \label{(5.19)}
\end{equation}

 Using \eqref{(3.8)}, we transform~\eqref{(5.15)} to the form
$$
j_\nu\left.\left(dpv-\frac {2m+1} {n+4m}i\delta pv\right)
\right|_{\partial M}=u.
$$
If $ jv=0$ then $pv=v$ and the equation simplifies to the following one:
\begin{equation}
j_\nu\left.\left(dv-\frac {2m+1} {n+4m}i\delta v\right)
\right|_{\partial M}=u.
            \label{(5.20)}
\end{equation}
We are going to derive some recurrent formulas from \eqref{(5.20)}
 which  uniquely determine the tensors~$v^{(s)}$.

In the normal boundary coordinates,
 the vector~$\nu$ has the coordinates $(0,\dots,0,1)$
 and equation~\eqref{(5.20)} takes the form
\begin{equation}
(dv)_{ni_1\dots i_{2m+1}}|_{y=0}-
\frac {2m+1} {n+4m}
(i\delta v)_{ni_1\dots i_{2m+1}}|_{y=0}=
u_{i_1\dots i_{2m+1}}.
            \label{(5.21)}
\end{equation}

Set $(i_1,\dots,i_s)=(\alpha_1,\dots,\alpha_s)$ and
 $i_{s+1}=\dots=i_{2m+2}=n$
 in the equality
$$
(dv)_{i_1\dots i_{2m+2}}=
\frac {1} {2m+2}\left(
\frac {\partial v_{i_2\dots i_{2m+2}}} {\partial x^{i_1}}+
\frac {\partial v_{i_1i_3\dots i_{2m+2}}} {\partial x^{i_2}}+
\dots+
\frac {\partial v_{i_1\dots i_{2m+2}}} {\partial x^{i_{2m+1}}}
\right).
$$
The first $s $ summands on the right-hand side vanish on~$\partial M$
 since $ v\bigr\vert_{\partial M}=0 $. The last~$2m-s+2$ summands
 are pairwise equal. Hence,
\begin{equation}
(dv)_{\alpha_1\dots\alpha_s n\dots n}|_{y=0}=
\frac {2m-s+2} {2(m+1)}
v^{(s)}_{\alpha_1\dots\alpha_s}.
            \label{(5.22)}
\end{equation}
Similarly, we deduce
\begin{equation}
(\delta v)_{\alpha_1\dots\alpha_s n\dots n}|_{y=0}=
v^{(s)}_{\alpha_1\dots\alpha_s}.
            \label{(5.23)}
\end{equation}

Setting $ (i_1,\dots,i_s)=(\alpha_1,\dots,\alpha_s) $ and
$ i_{s+1}=\dots=i_{2m+2}=n $ in the equality
$$
(i\delta v)_{i_1\dots i_{2m+2}}=\frac {1} {(2m+2)!}
\sum\limits_{\pi\in\Pi_{2m+2}}
g_{i_{\pi(1)}i_{\pi(2)}}
(\delta v)_{i_{\pi(3)}\dots i_{\pi(2m+2)}}
$$
and analyzing the right-hand side in the same way as has been used for deriving
\eqref{(4.9)}, we obtain
\begin{align*}
 (i\delta v)_{\alpha_1\dots\alpha_s n\dots n}\bigr\vert_{y=0}
 &=\frac{(2m-s+1)(2m-s+2)}
        {(2m+1)(2m+2)}(\delta v)_{\alpha_1\dots\alpha_s n\dots n}
       \biggr\vert_{y=0}\\
 &\quad+\frac{s(s-1)}
             {(2m+1)(2m+2)}\sigma(\alpha_1\dots\alpha_s)
 \big(g_{\alpha_1\alpha_2}
 (\delta v)_{\alpha_3\dots\alpha_s n\dots n}\bigr\vert_{y=0}
 \big).
 \end{align*}
With the help of \eqref{(5.23)}, this gives
\begin{align}
 (i\delta v)_{\alpha_1\dots\alpha_s n\dots n}\bigr\vert_{y=0}
 &=\frac{(2m-s+1)(2m-s+2)}
        {(2m+1)(2m+2)}v^{(s)}_{\alpha_1\dots\alpha_s}\nonumber\\
 &\qquad+\frac{s(s-1)}{(2m+1)(2m+2)}
 \big(iv^{(s-2)}
 \big)_{\alpha_1\dots\alpha_s}.
 \label{(5.24)}
 \end{align}

We set $(i_1,\dots,i_s)=(\alpha_1,\dots,\alpha_s)$ and
 $i_{s+1}=\dots=i_{2m+2}=n$ in~\eqref{(5.21)}.
Then we substitute values~\eqref{(5.22)} and~\eqref{(5.24)} for the summands on the left-hand side of~\eqref{(5.21)}
 and value~\eqref{(5.18)} for the right-hand side of~\eqref{(5.21)}. In such the way we obtain the recurrent formula
$$
(2m-s+2)(n+2m+s-1)v^{(s)}-s(s-1)iv^{(s-2)}=u^{(s)}.
$$
In view of \eqref{(5.19)}, this formula can be rewritten as
 \begin{align}
 2(m{-}s{+}1)(n{+}2m{+}2s{-}1)v^{(2s)}{-}2s(2s{-}1)iv^{(2s{-}2)}
 &=(-j)^{m{-}s}u^{(2m)},
 \label{(5.25)}\\
 (2m{-}2s{+}1)(n{+}2m{+}2s)v^{(2s{+}1)}{-}2s(2s{+}1)iv^{(2s{-}1)}
 &=(-j)^{m{-}s}u^{(2m{+}1)}.
 \label{(5.26)}
 \end{align}

Formulas~\eqref{(5.25)} and \eqref{(5.26)} imply the following
 representations:
 \begin{align}
 v^{(2s)}  &=\sum_{k=0}^{s}a(s,k)i^kj^{m-s+k}u^{(2m)},
 \label{(5.27)}\\
 v^{(2s+1)}&=\sum_{k=0}^{s}b(s,k)i^kj^{m-s+k}u^{(2m+1)},
 \label{(5.28)}
 \end{align}
 with some coefficients~$a(s,k)$ and~$b(s,k)$ which depend
 on~$n,m,s$, and~$k$ only.
 The dependence on~$n$ and~$m$ is not
 indicated explicitly
 since the values of these two parameters
 are fixed in the proof.

 Substitution of~\eqref{(5.27)} and~\eqref{(5.28)}
  into~\eqref{(5.25)} and~\eqref{(5.26)}, respectively,
  imply the following recurrent relations:
 \begin{equation}
  \begin{aligned}
 a(s,0)&=\dfrac{(-1)^{m-s}}
              {2(m-s+1)(n+2m+2s-1)},\\
 a(s,k)&=\dfrac{(2s-1)}
              {(m-s+1)(n\+2m\+2s\_1)}a(s\_1,k\_1)
 \ \ \mbox{for}\ \ 1\leq k\leq s,
 \end{aligned}
 \label{(5.29)}
  \end{equation}
 \begin{equation}
  \begin{aligned}
  b(s,0)&=\dfrac{(-1)^{m-s}}
              {(2m-2s+1)(n+2m+2s)},\\
 b(s,k)&=\dfrac{2s(2s+1)}
              {(2m-2s+1)(n+2m+2s)}b(s-1,k-1)
 \ \ \mbox{for}\ \ 1\le k\le s.
 \end{aligned}
 \label{(5.30)}
  \end{equation}
The coefficients $a(s,k)$ and~$b(s,k)$ are uniquely determined
 by equations (5.29) and~(5.30), and
 formulas~\eqref{(5.27)} and~\eqref{(5.28)} show that
 the tensors~$v^{(2s)}$ and~$v^{(2s+1)}$ are uniquely
 determined by~$u^{(2m)}$ and~$u^{(2m+1)}$.

Finally, we have to prove the tensors $v^{(2s)}$
 and~$v^{(2s+1)}$  satisfy the equations
  \begin{align}
 v^{(2s)}+jv^{(2s+2)}&=0
   \ \ \mbox{for}\ \ 0\le s\le m-1,
 \label{(5.31)}\\
 v^{(2s+1)}+jv^{(2s+3)}&=0
   \ \ \mbox{for}\ \ 0\le s\le m-1
 \label{(5.32)}
  \end{align}
  that are equivalent to the relation
  $j\frac{\partial v}{\partial \nu}\big\vert_{\partial M}=0$
  in view of the first formula in~\eqref{(5.18)}.

 Applying the operator~$ j $ to equation~\eqref{(5.27)},
 we obtain
\begin{equation}
jv^{(2s+2)}=\sum\limits_{k=0}^{s+1}a(s+1,k)
ji^kj^{m-s+k-1}u^{(2m)}.
            \label{(5.33)}
\end{equation}
We transpose the factors $j$ and $i^k$ on the right-hand side
 of~\eqref{(5.33)} with the help of Lemma~\ref{2.2}. We have to set
 $n:=n-1$ and $m:=2s-2k+2$ in the statement of the ltmma since $j^{m-s+k-1}u^{(2m)}$
 is the tensor
 of rank $2s-2k+2$ on the $(n-1)$-dimensional manifold $\partial M$.
 So we have
 \begin{align*}
 jv^{(2s+2)}= \sum_{k=0}^{s+1}a(s+1,k)
 &\bigg(\frac{k(n+4s-2k+1)}
            {(s+1)(2s+1)}i^{k-1}j^{m-s+k-1}\\
 &\qquad\qquad+
      \frac{(s-k+1)(2s-2k+1)}
           {(s+1)(2s+1)}i^{k}j^{m-s+k}
 \bigg)u^{(2m)}.
 \end{align*}
 This equality can be transformed as follows:
 \begin{align}
 \hskip-5mm jv^{(2s+2)}= \sum_{k=0}^s
 &\bigg(\frac{(s\_k\+1)(2s\_2k\+1)}
            {(s\+1)(2s\+1)}a(s\+1,k)\nonumber\\
 &\quad+
 \frac{(k\+1)(n\+4s\_2k\_1)}
     {(s\+1)(2s\+1)}a(s\+1,k\+1)
 \bigg)i^{k}j^{m\_s\+k}u^{(2m)}.
 \label{(5.34)}
 \end{align}
Substitution of~\eqref{(5.27)} and~\eqref{(5.34)}
 into~(5.31) gives
\begin{align*}
 \sum_{k=0}^s
 \bigg[a(s,k)&+\frac{(s-k+1)(2s-2k+1)}
                  {(s+1)(2s+1)}a(s+1,k)\\
            &+ \frac{(k+1)(n+4s-2k-1)}
           {(s+1)(2s+1)}a(s+1,k+1)
 \bigg]i^{k}j^{m-s+k}u^{(2m)}=0.
 \end{align*}
 Since $u^{(2m)}$~is an arbitrary tensor, the expression in the
 brackets must be equal to zero for all~$s$ and~$k$, i.e.,
 \begin{equation}
 \begin{aligned}
 a(s,k)&+\frac{(s-k+1)(2s-2k+1)}
             {(s+1)(2s+1)}a(s+1,k)\\
      &+
 \frac{(k+1)(n+4s-2k-1)}
     {(s+1)(2s+1)}a(s+1,k+1)=0\ \
 \mbox{for}\ \ 0\le k\le s\le m-1.
 \end{aligned}
 \label{(5.35)}
 \end{equation}
Similarly, (5.32) is equivalent to the equation
 \begin{equation}
 \begin{aligned}
 b(s,k)&+
 \frac{(s-k+1)(2s-2k+3)}
     {(s+1)(2s+3)}b(s+1,k)\\
     &+
 \frac{(k+1)(n+4s-2k+1)}
      {(s+1)(2s+1)}b(s+1,k+1)=0\ \
\mbox{for}\ \ 0\le k\le s\le m-1.
 \end{aligned}
 \label{(5.36)}
  \end{equation}

 We have to prove the following statement: Being defined by recurrent formulas~(5.29) and~(5.30), the coefficients~$a(s,k)$ and~$b(s,k)$
  satisfy equations~\eqref{(5.35)} and~\eqref{(5.36)}, respectively.
  This can be proved with the help of the following explicit formulas for the
  coefficients:
  \begin{align}
 a(s,k)
 &= \frac{(-1)^{m\_s\_k}}2
 \frac{s!(2s\_1)!!(m\_s)!}
      {(n\+2m\+2s\_1)!!}\nonumber\\
 &\qquad\qquad \qquad \times
 \frac{(n\+2m\+2s\_2k\_3)!!}
      {(s\_k)!(m\_s\+k\+1)!(2s\_2k\_1)!!},
 \label{(5.37)}\\\noalign{\vskip12pt}
 b(s,k)
 &=(-1)^{m\_s\_k}
 \frac{s!(2s\+1)!!(2m\_2s\_1)!!}
      {(n\+2m\+2s)!!}\nonumber\\
 &\qquad\qquad \qquad \times
 \frac{2^k(n\+2m\+2s\_2k\_2)!!}
      {(s\_k)!(2m\_2s\+2k\+1)!!(2s\_2k\+1)!!}.
 \label{(5.38)}
 \end{align}
  Here we use the standard notation:
 $$
 (2k)!!=2^kk!,
 \ \ (2k+1)!!=(2k+1)(2k-1)\dots 1, 
 \ \ (-1)!!=1.
 $$
 Formulas~\eqref{(5.37)} and~\eqref{(5.38)}
 are proved by substituting them into
 recurrent formulas~(5.29) and~(5.30) and checking the validity of the resulting equations.
 Then the validity of \eqref{(5.35)} and~\eqref{(5.36)} is proved by substitution of values~\eqref{(5.37)} and~\eqref{(5.38)} followed by
 a direct calculation.
\end{proof}

\section{Proof of Theorems 1.6 and 1.7}

 In this section, for a Riemannian manifold, we use the notions of a semibasic tensor field and the vertical and horizontal derivatives of
 such a field.
 The corresponding definitions are presented in~\cite[Ch.\,3]{[mb]}
 (see also~\cite[\S\,4]{[DP1]} where the case of a Finsler manifold
 is considered as well).
  We denote the space of smooth
 semibasic $(r,s)$-tensor fields on~$TM$ by $C^\infty(\beta^r_sM)$,
 and~$\nablav$, $\nablah:C^\infty(\beta^r_sM)\rightarrow C^\infty(\beta^r_{s+1}M)$~
 denote the vertical and horizontal derivatives, respectively.

 \begin{proof}[Proof of Theorem \rm 1.6]
  Let $u\in C^\infty(S^{m}\tau'_M)$~be a trace-free
 conformal Killing
 tensor field, i.e., $ ju=0$ and $ du=iv $ for some
 $ v\in C^\infty(S^{m-1}\tau'_M)$. We assume here $m\ge1$
 since the statement of the theorem is trivial in the case of $ m=0 $.
 Define the function
 $U\in C^\infty(TM)=C^\infty(\beta^0_0M)$ as follows:
 \vskip-0.0007pt\noindent
 \begin{equation}\label{U-def}
 U(x,\xi)=u_{i_1\dots i_m}(x)\xi^{i_1}\cdots\xi^{i_m}.
 \end{equation}
 The function is homogeneous with respect to $\xi$,
  \begin{equation}\label{U-hom}
   U(x,t\xi)=t^m U(x,\xi),
 \end{equation}
  and satisfies the kinetic equation
  \begin{equation}\label{HU=v}
 HU(x,\xi)=|\xi|^2v_{i_1\dots i_{m-1}}(x)
 \xi^{i_1}\cdots\xi^{i_{m-1}},
 \end{equation}
  where $H$~denotes differentiation
 along the geodesic flow,
 and $HU(x,\xi)=\xi^i\nablah_{i}U$.
 Since $ju=0$, the function~$U$ satisfies the equation
  \begin{equation}\label{DU=0}
 \Deltav U=0,
 \end{equation}
 \vskip-7pt\noindent
 where $\Deltav=\nablav{}^i\nablav_{i}$~is the vertical Laplacian
  (see Lemma~2.4).

 Let us derive the commutation formula for $\Deltav$ and~$H$.
 Since the vertical and horizontal derivatives commute,
 $$
 \Deltav HU= \nablav{}^i\nablav_i\big(\xi^j\nablah_{j}U
                     \big)= \nablav{}^i\big(\nablah_iU+\xi^j\nablah_j\nablav_iU
            \big)= 2\nablav{}^i\nablah_iU+H\Deltav U.
 $$
 By~\eqref{DU=0}, this gives
  \begin{equation}\label{DH}
 \Deltav HU=2\nablav{}^i\nablah_iU.
 \end{equation}

 We write the Pestov identity for the function~$U$
 (see~\cite{[mb]})
  \begin{equation}\label{Pes}
   2\big\langle\kern-.3mm\nablah U,\nablav HU
  \big\rangle-\nablav_i
  \big(\nablah{}^iU\cdot HU
  \big)=\big\vert\nablah U
        \big\vert^2+\nablah_iw^i-R_{\xi}
  \big(\nablav U
  \big),
 \end{equation}
  where
  \begin{equation}\label{RU}
 R_\xi\big(\nablav U
      \big)=R_{ijkl}\xi^i\xi^k\nablav{}^j U\cdot\nablav{}^lU
 \end{equation}
  and $w$~is some semibasic vector field on~$TM$. It depends on $ U $
 quadratically
 but its value is not relevant now.
  Since the sectional curvature is nonpositive, we have
  \begin{equation}\label{RU<0}
 R_\xi\big(\nablav U
      \big)\le 0.
 \end{equation}

 We transform the first summand on the left-hand side of
 equation~\eqref{Pes} with the help of~\eqref{DH} as follows:
 \begin{align*}
 2\big\langle\kern-.3mm\nablah U,\nablav HU
  \big\rangle
 &=2\nablah{}^iU\cdot\nablav_i(HU)\\
 &=\nablav_i\big(2\nablah{}^iU\cdot HU
            \big)-2\nablav{}^i\nablah_iU\cdot HU\\
 &=-\Deltav HU\cdot HU+\nablav_i\big(2\nablah{}^iU\cdot HU
                                \big)\\
 &=-\nablav_i\nablav{}^iHU\cdot HU+\nablav_i
    \big(2\nablah{}^i U\cdot HU
    \big)\\
 &=-\nablav_i\big(\nablav{}^i HU\cdot HU
             \big)+\nablav{}^iHU\cdot\nablav_iHU+
                   \nablav_i\big(2\nablah{}^i U\cdot HU
                            \big)\\
 &=\big\vert\nablav HU
   \big\vert^2+\nablav_i
   \big(2\nablah{}^iU\cdot HU-\nablav{}^iHU\cdot HU
   \big).
 \end{align*}
Substitute this value into~\eqref{Pes}
 $$
 \big\vert\nablav HU
 \big\vert^2+\nablav_i
 \big(\nablah{}^iU\cdot HU-\nablav{}^iHU\cdot HU
 \big)=\big\vert\nablah U
       \big\vert^2+\nablah_iw^i-R_\xi
 \big(\nablav U
 \big).
 $$
We integrate this equality over~$\Omega M$ versus to
   the Liouville volume form~$d\Sigma$ and transform the integrals of divergent terms by the
   Gauss-Ostrogradsky formulas
       (see~\cite[Theorem 3.6.3]{[mb]})
 \begin{equation}
 \int_{\Omega M}
 \Big(\big\vert\nablav HU
      \big\vert^2+(n+2m)
      \big\langle\xi,\nablah U-
                     \nablav HU
      \big\rangle HU
 \Big)\,d\Sigma
 =\int_{\Omega M}
 \Big(\big\vert\nablah U
      \big\vert^2-R_\xi
      \big(\nablav U
      \big)
 \Big)\,d\Sigma.
 \label{IP}
 \end{equation}
   The coefficient $(n+2m)$ appears here because the
 semibasic
 vector field $\big(\nablah U-\nablav HU\big)HU $
 is homogeneous of degree~$2m+1$ in $\xi$.
 With the help of the Euler formula for homogeneous functions
 \begin{equation}
 \big\langle\xi,\nablav HU
 \big\rangle=(m+1)HU
 \label{EF}
 \end{equation}
 and of $\big\langle\xi,\nablah U\big\rangle=HU$, formula (\ref{IP}) takes the form
 \begin{equation}\label{IPes}
 \int_{\Omega M}
 \Big(\big\vert\nablav HU
      \big\vert^2-m(n+2m)|HU|^2
 \Big)\,d\Sigma= \int_{\Omega M}
 \Big(\big\vert\nablah U
      \big\vert^2-R_\xi
      \big(\nablav U
      \big)
 \Big)\,d\Sigma.
 \end{equation}

 Now, we estimate the left-hand side of~\eqref{IPes} as follows.
At an
 arbitrary point $(x,\xi)\in\Omega M$, we represent the vector~$\nablav HU$  in the form
  \begin{equation}\label{botHU}
 \nablav HU=\lambda\xi+\nablav{}^\perp HU,
 \quad
 \big\langle\xi,\nablav{}^\perp HU
 \big\rangle=0.
 \end{equation}
  Here $\lambda=\lambda(x,\xi)$ is some scalar function.
 The second summand of the representation has a clear geometrical
 sense: If $\psi_x=HU\bigr\vert_{\Omega_x M}$ is a
 restriction of~$HU$ to the unit sphere~$\Omega_x M $ then
 $\nablav{}^\perp HU(x,\xi)\,{=}\,\nabla\psi_x(\xi)$ is the gradient
 of the function~$\psi_x$ at the point $\xi\in\Omega_x M$.
 Formula~\eqref{HU=v} implies that
 $\psi_x(\xi)=v_{i_1\dots v_{i_{m-1}}}(x)\xi^{i_1}\cdots\xi^{i_{m-1}}$.
 Applying the Euler formula~\eqref{EF},
 we see that $\lambda=(m+1)HU$. Thus, \eqref{botHU} implies
 \begin{equation}\label{11}
 \big\vert\nablav HU
 \big\vert^2=(m+1)^2|HU|^2+|\nabla\psi_x|^2.
 \end{equation}
 By Green's formula,
 $$
 \int_{\Omega_x M}|\nabla\psi_x|^2\,d\omega(\xi)=-
 \int_{\Omega_x M}\psi_x\Delta_\omega\psi_x\,d\omega(\xi),
 $$
 where $\Delta_\omega$~is the spherical Laplacian on~$\Omega_x M$.
 The eigenvalues of  $-\Delta_\omega$ are
 $\lambda_k=k(n+k-2)$, $k=0,1,\dots$, and the spherical harmonics
 of degree~$k $ are the eigenfunctions corresponding
 to~$\lambda_k$. Since $\psi_x$~ is a polynomial of degree~$m-1$,
 the last integral can be estimated
 as follows:
  \begin{align*}
 \int_{\Omega_x M}|\nabla\psi_x|^2\,d\omega(\xi)
 &=-\int_{\Omega_x M}\psi_x\Delta_\omega\psi_x\,d\omega(\xi)\\
 &\le\sup_{k\le m-1}\lambda_k
     \int_{\Omega_x M}|\psi_x|^2\,d\omega(\xi)\\
 &=(m-1)(n+m-3)\int_{\Omega_x M}|HU|^2\, d\omega(\xi).
 \end{align*}
  Together with~\eqref{11}, this imply
 $$
 \int_{\Omega M}
 \Big(\big\vert\nablav HU
      \big\vert^2-m(n+2m)|HU|^2
 \Big)\,d\Sigma\le-(2m+n-4)
 \int_{\Omega M}|HU|^2\, d\Sigma.
 $$
  Taking this inequality into account, we derive from~\eqref{IPes}
 \begin{equation}\label{12}
 -(2m+n-4)
 \int_{\Omega M}|HU|^2\,d\Sigma
 \ge\int_{\Omega M}
 \Big(\big\vert\nablah U
      \big\vert^2-R_\xi
      \big(\nablav U
      \big)
 \Big)\,d\Sigma.
 \end{equation}

 The coefficient $(2m+n-4) $ is nonnegative since $m\ge 1$ and $n\ge 2$.
 Hence the left-hand side of~\eqref{12} is nonpositive.
 At the same time, the right-hand side of~\eqref{12}
 is nonnegative in virtue of~\eqref{RU<0}. Thus, both sides of~\eqref{12} are equal to zero.
 In particular,
  $\big\vert\nablah U\big\vert^2-R_\xi  \big(\nablav U\big)=0$
 on~$\Omega M$.
 Applying~\eqref{RU<0} once more, we obtain
  \begin{equation}\label{RU=0}
 R_\xi(\nablav U)=0
 \end{equation}
 and $\nablah U=0$ on~$\Omega M$. Hence, $\nablah U$
 is identically zero on~$TM$.
 Now,~\eqref{U-def} implies
 $ 0=\nablah_i U=\nabla_iu_{i_1\dots i_m}\xi^{i_1\dots i_m}$.
 Thus, $\nabla u$ is identically zero on~$M$,
 i.e., $u$ is
 absolutely parallel.

 Now, we prove the statement: $u(x_0)=0$ if all sectional curvatures at
 the point~$x_0$ are negative. This implies~$u\equiv 0$
 since~$u$ is absolutely parallel.

 Given $\xi\in\Omega_{x_0}$,  like in \eqref{botHU}, we represent the vector $\nablav U(x_0,\xi)$ in the form
 \begin{equation}\label{14}
 \nablav U(x_0,\xi)=\mu(\xi)\xi+\nablav{}^\perp U(x_0,\xi), \quad
 \big\langle\xi,\nablav{}^\perp U(x_0,\xi) \big\rangle=0.
 \end{equation}
 Here~$\mu$ is some scalar function.
  We claim $\nablav{}^\perp U(x_0,\xi)=\nobreak0 $ for
  all $\xi\in\Omega_{x_0}$. Indeed, assume
  $\nablav{}^\perp U(x_0,\xi)\ne 0 $ for some~$\xi$.
 Then, substituting \eqref{14} into~\eqref{RU} and using symmetries
 of the curvature tensor, we infer
 $$
 R_\xi\big(\nablav{}^\perp U(x_0,\xi)
     \big)=K
     \big(x_0,\xi\wedge\nablav U(x_0,\xi)
     \big)\big\vert\nablav{}^\perp U
          \big\vert^2<0.
 $$
 Here $K\big(x_0,\xi\wedge\nablav U(x_0,\xi)\big)$~is
 the value of the sectional curvature at the point~$x_0$ in the two-dimensional direction
  $\xi\wedge\nablav U(x_0,\xi)$.
 The last inequality contradicts~\eqref{RU=0}.

 Hence, $\nablav{}^\perp U(x_0,\xi)=0$ for all
 $\xi\in\Omega_{x_0}$. This means that
 \begin{equation}\label{15}
 U\bigr\vert_{\Omega_{x_0}M}=c=\const.
 \end{equation}
  In the case of an odd $m$, the constant~$c$ must be equal to zero since the function $U(x,\xi) $ is odd in~$\xi$.
  In the case of $m=2l>0$, \eqref{U-def} and~\eqref{15} imply $u(x_0)=cg^l$.
  The condition $ ju=0 $ implies $c=0$. Thus, $u(x_0)=0$
  for all~$m$.

 We have proved the statements of Theorem 1.6 concerning
 trace-free conformal Killing tensor fields.  We now prove
 the statements of the theorem concerning a Killing field by induction in the rank~$m$ of the field.

  The statements are valid in the cases of $m=0$ and of $m=1$ since
  a Killing vector field is a trace-free
  conformal Killing field
  as well. Assume $m\ge2$ and let $u\in C^\infty(S^m\tau_M)$~be a Killing
  tensor field
  of rank~$m$.
  Represent~$u$ in the form
 \begin{equation}\label{16}
 u=\tilde u+iv,
 \end{equation}
 where $\tilde u$ satisfies the condition $j\tilde u=0$.
 So, $\tilde u$~is a trace-free
 conformal Killing field, and thus,
 $\nabla\tilde u=0$. Applying the operator~$d$ to~\eqref{16}, we
 obtain $ i dv=0$. Hence, $ dv=0$, i.e., $v$~is a Killing field. We
 obtain $\nabla v=0$ by the inductive assumption. So, both summands
 on the right-hand side of~\eqref{16} are
 absolutely parallel, and  $u$ is also an absolutely parallel field.

 The remaining statement on Killing fields is proved
 in a similar way.
  \end{proof}

 \begin{proof}[Proof of Theorem \rm 1.7]
 We now need the following corollary of Theorem A
 from~\cite{[DP2]} (see also the remark after
 formulation of the theorem in~\cite{[DP2]}).

 \begin{proposition}\label{cohomag}
 Let $(M,g)$~be a closed Riemannian manifold without
 conjugate points.
 Let $h\in C^\infty(M)$ and $\theta\in C^\infty(\tau'_M)$.
 If the equation
 $$
 HU(x,\xi)=h(x)+\theta_i(x)\xi^i,
 \quad (x,\xi)\in\Omega M,
 $$
 has a solution $U\in C^\infty(\Omega M)$ then $h=0$ and
 $\theta$~is an exact
 $1$-form.
 \end{proposition}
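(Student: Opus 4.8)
The plan is to use the parity of the equation in the velocity variable~$\xi$ to decouple the scalar unknown~$h$ from the $1$-form unknown~$\theta$, and then to invoke Theorem~A of~\cite{[DP2]} (together with the remark after it) for each part separately. The reduction itself is elementary; all the genuine content sits in the cited theorem.

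First I would split $U$ into its even and odd parts in~$\xi$, writing $U=U^++U^-$ with $U^\pm(x,-\xi)=\pm U^\pm(x,\xi)$; these are smooth since $U^\pm=\tfrac12(U\pm a^*U)$, where $a\dv(x,\xi)\mapsto(x,-\xi)$ is the flip. The geodesic vector field~$H$ reverses this parity: time reversal of the flow is conjugation by~$a$, whence the operator identity $a^*H=-Ha^*$, so that $HU^+$ is odd and $HU^-$ is even in~$\xi$. Since the right-hand side is already decomposed into its even part~$h$ (degree~$0$) and its odd part $\theta_i(x)\xi^i$ (degree~$1$), matching parities shows that the single equation is equivalent to the decoupled pair
$$
HU^-=h,\qquad HU^+=\theta_i(x)\xi^i\quad\text{on}\ \Omega M.
$$

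Next I would read off each conclusion. The second equation says that the function $(x,\xi)\mapsto\theta_i(x)\xi^i$ is a smooth coboundary of the geodesic flow; on a manifold without conjugate points this is exactly the situation in which Theorem~A of~\cite{[DP2]} forces $\theta$ to be an exact $1$-form. The first equation says that the pullback of~$h$ to~$\Omega M$ is a coboundary, and the remark following Theorem~A yields $h=0$. Combining the two gives the proposition.

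The hard part is entirely in the cited input, not in the reduction. Integrating the original equation over~$\Omega M$ against the flow-invariant Liouville form gives only $\int_M h\,dV=0$, and integrating along closed geodesics gives only the vanishing of the integrals of~$h$ and of~$\theta$ over periodic orbits; upgrading these to the pointwise statements $h\equiv0$ and ``$\theta$ exact'' is precisely where the no-conjugate-points hypothesis must be used globally. In~\cite{[DP2]} this is achieved through a Pestov-type identity in which the role played above, in the proof of Theorem~\ref{negative}, by the sign condition $R_\xi(\nablav U)\le0$ is taken over by an index-form estimate that is available whenever the geodesics carry no conjugate points.
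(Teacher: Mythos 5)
Your proposal is correct and takes essentially the same route as the paper, which offers no independent argument either: it quotes the proposition verbatim as a direct corollary of Theorem~A of~\cite{[DP2]} (together with the remark following that theorem), exactly the input you invoke, and your closing remarks on where the no-conjugate-points hypothesis enters (a Pestov-type identity with the curvature term controlled by index-form/Riccati estimates rather than a pointwise sign condition) accurately describe the cited proof. Your even/odd splitting $U=U^++U^-$ is valid --- the identity $a^*H=-Ha^*$ for the flip $a(x,\xi)=(x,-\xi)$ does decouple the system into $HU^-=h$ and $HU^+=\theta_i(x)\xi^i$ --- but it is superfluous, since Theorem~A of~\cite{[DP2]} already treats the combined right-hand side $h(x)+\theta_i(x)\xi^i$ in a single statement.
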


 Assume now  $u$ to be a conformal Killing
 covector field, i.e.,
 \begin{equation}\label{du=v}
 du=iv
 \end{equation}
  for some function~$v$ on~$M$.
  Define the function $U\in C^\infty(\Omega M)$ by
 $$
 U(x,\xi)=u_i(x)\xi^i,
 \quad(x,\xi)\in\Omega M.
 $$
 As follows from~\eqref{du=v}, $U$ satisfies
 the kinetic equation
 $$
 HU(x,\xi)=v(x)\ \ \text{on}\ \ \Omega M.
 $$
 Applying Proposition~\ref{cohomag}, we obtain $v\equiv 0$.
 Since
 $$
 \big(du(x)
 \big)_{ij}\xi^i\xi^j=HU(x,\xi)=v(x)=0,
 $$
  we see that $du=0$, i.e., $u$ is a Killing covector field.
  If the geodesic flow of~$(M,g)$ has a dense orbit in~$\Omega M$
  then $HU=0$ implies $U\equiv\const$.
  This means that $u\equiv 0$.

 Assume now $u$ to be a trace-free
 conformal Killing symmetric field
 of rank~$2$, i.e.,
 $$
 du=iv,
 \quad ju=0
 $$
  for some covector field~$v$. Define the function
 $$
 U(x,\xi)=u_{ij}(x)\xi^i\xi^j
 $$
  on~$\Omega M$. It satisfies the equation
 \begin{equation}\label{HU=vxi}
 HU(x,\xi)=v_i(x)\xi^i,
 \quad (x,\xi)\in\Omega M.
 \end{equation}
  By Proposition \ref{cohomag}, $v$ is an exact 1-form, i.e.,
  \begin{equation}\label{v=dphi}
 v=d\varphi
 \end{equation}
  for some function~$\varphi$ on~$M$.
  Formulas \eqref{HU=vxi} and~\eqref{v=dphi}
  imply
 \begin{equation}\label{u-phig}
 d(u-\varphi g)=0.
 \end{equation}
 Since $ju=0$, this means $u$ is the trace-free part of the
 Killing field $u-\varphi g$. On the other hand,
 the trace-free part of a Killing tensor field is obviousely a trace-free
 conformal Killing field.

 If the geodesic flow~$(M,g)$ has a dense in~$\Omega M$ orbit
 then $u-\varphi g=cg$ for some
 constant~$c$, as follows from~\eqref{u-phig}. Together with the condition $ju=0$,
 this means~$u=0$.
 Theorem 1.6 is proved.
 \end{proof}

 \section{Comparison of differential operators
              modulo low order terms}

 In our studying differential operators on
 tensor fields, we will usually ignore low order terms.
 In order to simplify the exposition, we introduce the following notation:
 If~$A$ and~$B$~are two differential expressions, we write
 $$
 Au=Bu
 \pmod{\nabla^ku}\ \ \mbox{on}\ \ {\mathcal A}
 $$
 if there exists a differential operator~$L^{(k)}$ of order~$k$
 such that
 $$
 Au=Bu+L^{(k)}u
 $$
  for all tensor fields~$u$ belonging to the subspace~$\mathcal A$ of the
  space $C^\infty(\otimes^*\tau')$. The choice of the subspace
  will be mostly clear from the context.
  Similar notation is used for differential operators
  depending on several variables.

 \begin{Lemma}\label{3.5}
 For $k\ge 1$ and $u\in C^\infty(\otimes^m\tau')$$,$
 $$
 {\nabla}_{\!j_1\dots j_k}u_{i_1\dots i_m}= \sigma(j_1\dots j_k)
 \big(\nabla_{\!j_1\dots j_k}u_{i_1\dots i_m}
 \big)
 \pmod{{\nabla}^{k-2}u}.
 $$
 \end{Lemma}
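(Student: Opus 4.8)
The plan is to exploit the fact that the only obstruction to the symmetry of $\nabla^k u$ in its differentiation indices is curvature, which carries no derivatives of $u$; consequently each interchange of two adjacent indices costs only a correction of order at most $k-2$ in~$u$.

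First I would recall the Ricci identity for the commutator of two covariant derivatives: for an arbitrary tensor field~$T$,
$$
\nabla_{\!a}\nabla_{\!b}T-\nabla_{\!b}\nabla_{\!a}T=R\cdot T,
$$
where $R\cdot T$ denotes the usual pointwise (zeroth order in the derivatives of~$T$) action of the curvature tensor. I would apply this with $a=j_p$, $b=j_{p+1}$ and $T=\nabla_{\!j_{p+2}\dots j_k}u$, a tensor built from $k-p-1$ derivatives of~$u$, and then apply the outer operator $\nabla_{\!j_1\dots j_{p-1}}$ to both sides. This yields
$$
\nabla_{\!j_1\dots j_k}u_{i_1\dots i_m}-\nabla_{\!j_1\dots j_{p-1}j_{p+1}j_pj_{p+2}\dots j_k}u_{i_1\dots i_m}=\nabla_{\!j_1\dots j_{p-1}}\bigl(R\cdot\nabla_{\!j_{p+2}\dots j_k}u\bigr).
$$
On the right-hand side the outer operator consists of $p-1$ derivatives; by the Leibniz rule it distributes between the factors~$R$ and $\nabla_{\!j_{p+2}\dots j_k}u$, and the contribution of highest order in~$u$ arises when all $p-1$ derivatives land on the second factor, producing (a covariant derivative of)~$R$ times $\nabla^{(p-1)+(k-p-1)}u=\nabla^{k-2}u$. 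Hence interchanging the adjacent indices $j_p$ and $j_{p+1}$ alters $\nabla^k u$ only by a term lying in $\nabla^{k-2}u$, i.e.
$$
\nabla_{\!j_1\dots j_k}u_{i_1\dots i_m}=\nabla_{\!j_1\dots j_{p-1}j_{p+1}j_pj_{p+2}\dots j_k}u_{i_1\dots i_m}\pmod{\nabla^{k-2}u}.
$$

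Since the adjacent transpositions generate the full permutation group $\Pi_k$ of the indices $j_1,\dots,j_k$, repeated application shows that every permutation of the differentiation indices leaves $\nabla^k u$ unchanged modulo $\nabla^{k-2}u$. Averaging over $\Pi_k$ — which is exactly the action of the symmetrization $\sigma(j_1\dots j_k)$ — and using that a finite sum of terms of order $\le k-2$ is again of order $\le k-2$, gives the asserted identity. The only point requiring care, rather than a genuine obstacle, is the bookkeeping of the order count after the Leibniz expansion: one must verify that the highest-order contribution of $\nabla_{\!j_1\dots j_{p-1}}\bigl(R\cdot\nabla_{\!j_{p+2}\dots j_k}u\bigr)$ is $\nabla^{k-2}u$ and not $\nabla^{k-1}u$. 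This holds precisely because the commutator removes one differentiation while the curvature factor absorbs no derivative of~$u$.
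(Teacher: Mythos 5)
Your argument is correct and is essentially the paper's intended proof: the paper omits the details, indicating only an induction in $k$ based on the commutation formula \eqref{(3.14)}, and your adjacent-transposition argument via the Ricci identity, with the Leibniz-rule bookkeeping showing each swap costs only a term in $\nabla^{k-2}u$, is precisely the natural completion of that sketch. The order count is right (the commutator trades two derivatives for a curvature factor of order zero in $u$, so at most $(p-1)+(k-p-1)=k-2$ derivatives reach $u$), and averaging over $\Pi_k$ finishes the proof exactly as needed.
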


 We omit the proof that can be easily carried out by induction
 in $ k $ starting from the following formula for the second order
 derivatives:
 \begin{equation}
 \big({\nabla}_{\!jk}-{\nabla}_{\!kj}
 \big)u_{i_1\dots i_m} =
 \sum_{a=1}^mR^p_{i_akj}
 u_{i_1\dots i_{a-1}pi_{a+1}\dots i_m},
 \label{(3.14)}
 \end{equation}
  where $\big(R^p_{ijk}\big)$~is the curvature tensor.

 \begin{Lemma}\label{3.6}
 For $v\in C^\infty(S^m\tau')$ and $p\ge 0$$,$
 \begin{align*}
 \nabla_{\!j_1\dots j_{m+p}}v_{i_1\dots i_m}
 &=\sigma(i_1\dots i_m)\sigma(j_1\dots j_{m+p})
 \sum_{l=0}^m(-1)^l{p+l-1\choose l}{m+p\choose m-l}\\
 &\quad\times
 \nabla_{\!i_{m-l+1}\dots i_mj_{l+p+1}\dots j_{m+p}}
 (d^pv)_{i_1\dots i_{m-l}j_1\dots j_{l+p}}
 \pmod{\nabla^{m+p-2}v}.
 \end{align*}
  \end{Lemma}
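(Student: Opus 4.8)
The plan is to reduce the statement to the purely algebraic inversion formula of Lemma~\ref{2.1}, treating every curvature commutator as a lower–order error via Lemma~\ref{3.5}. First I would record two auxiliary facts, each valid modulo $\nabla^{m+p-2}v$. Fact (a): $\nabla^{m+p}v$ is totally symmetric in its $m+p$ differentiation indices; this is Lemma~\ref{3.5} with $k=m+p$. Fact (b): writing $d=\sigma\nabla$ and using that $\nabla$ commutes \emph{exactly} with permutations of already–present tensor slots, that two covariant derivatives commute modulo curvature by~\eqref{(3.14)}, and the absorption identity $\sigma(a_0\dots a_{m+p-1})\sigma(a_1\dots a_{m+p-1})=\sigma(a_0\dots a_{m+p-1})$, a short induction on $p$ yields
$$(d^pv)_{a_1\dots a_{m+p}}=\sigma(a_1\dots a_{m+p})\nabla_{a_1\dots a_p}v_{a_{p+1}\dots a_{m+p}}\pmod{\nabla^{p-2}v}.$$

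Next I would set up the dictionary with Lemma~\ref{2.1}. I arrange the left–hand side of the lemma as the rank-$(2m+p)$ tensor $u_{i_1\dots i_mj_1\dots j_pk_1\dots k_m}:=\nabla_{j_1\dots j_pk_1\dots k_m}v_{i_1\dots i_m}$ (so its $v$-slots are $i_1\dots i_m$ and its differentiation slots are $j_1\dots j_pk_1\dots k_m$), and I arrange $\nabla^m(d^pv)$ as $f_{i_1\dots i_mj_1\dots j_pk_1\dots k_m}:=\nabla_{k_1\dots k_m}(d^pv)_{i_1\dots i_mj_1\dots j_p}$ (so the first $m+p$ slots are the symmetric $d^pv$-indices and the last $m$ slots are the outer differentiation indices). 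By Fact (a), $u$ has symmetry~\eqref{(2.7)} modulo $\nabla^{m+p-2}v$; and by Fact (b) followed by differentiating $m$ more times and reordering the differentiation string $j_1\dots j_pk_1\dots k_m$ (which costs only curvature, hence lower–order, terms),
$$\sigma(i_1\dots i_mj_1\dots j_p)\,u=f\pmod{\nabla^{m+p-2}v}.$$

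To apply Lemma~\ref{2.1}, which is an \emph{exact} algebraic statement, I would first replace $\nabla^{m+p}v$ by its exact symmetrization $\hat u:=\sigma(j_1\dots j_{m+p})\nabla^{m+p}v$, which agrees with $u$ modulo $\nabla^{m+p-2}v$ and now satisfies~\eqref{(2.7)} \emph{exactly}; then $\hat f:=\sigma(i_1\dots i_mj_1\dots j_p)\hat u$ satisfies~\eqref{(2.5)} exactly and agrees with $f$ modulo $\nabla^{m+p-2}v$. Lemma~\ref{2.1} then gives $\hat u=\Phi(\hat f)$ exactly, where $\Phi$ denotes the algebraic operator appearing on the right–hand side of~\eqref{(2.8)}. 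Since $\Phi$ is algebraic it preserves differentiation order, so it carries the congruence $\hat f\equiv f$ to $\Phi(\hat f)\equiv\Phi(f)\pmod{\nabla^{m+p-2}v}$; combining this with $\hat u\equiv\nabla^{m+p}v$ gives $\nabla^{m+p}v\equiv\Phi(f)$.

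It remains to check that $\Phi(f)$ is \emph{literally} the right–hand side of the lemma. Under the index identification $k_r\leftrightarrow j_{p+r}$, the outer factor $\sigma(j_1\dots j_pk_1\dots k_m)$ becomes $\sigma(j_1\dots j_{m+p})$, the first $m+p$ slots $i_1\dots i_{m-l}\,j_1\dots j_p\,k_1\dots k_l$ of the summand become the $d^pv$-indices $i_1\dots i_{m-l}\,j_1\dots j_{l+p}$, and the last $m$ slots $k_{l+1}\dots k_m\,i_{m-l+1}\dots i_m$ become the differentiation indices $j_{l+p+1}\dots j_{m+p}\,i_{m-l+1}\dots i_m$, with the coefficients $(-1)^l\binom{p+l-1}{l}\binom{m+p}{m-l}$ unchanged. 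I expect the only real work to be this slot–for–slot verification of the dictionary, since one must track how the $m+p$ symmetric $d^pv$-slots split between the two symmetry classes of Lemma~\ref{2.1} rather than merely identify things up to symmetry; the order bookkeeping (every error lies in $\nabla^{m+p-2}v$) and the induction in Fact (b) are routine by comparison.
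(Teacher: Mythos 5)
Your proposal is correct and follows essentially the same route as the paper: the paper likewise defines the pair of tensors $u$ and $f$ (its equations~\eqref{(3.15)}, where the symmetrizations are built into the definitions, so your $\hat u$, $\hat f$ are literally the paper's $u$, $f$), verifies the hypotheses of Lemma~\ref{2.1} modulo $\nabla^{m+p-2}v$ via Lemma~\ref{3.5}, applies the exact inversion formula~\eqref{(2.8)}, and translates back through $f\equiv\sigma(k_1\dots k_m)\nabla^m(d^pv)$. If anything, your explicit separation of the exact algebraic identity $\hat u=\Phi(\hat f)$ from the congruences, and your Fact~(b) on $d^pv=\sigma\nabla^pv$, spell out bookkeeping that the paper leaves implicit.
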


 \begin{proof} Define the tensors~$u$ and~$f$
 as follows:
  \begin{equation}
 \aligned
 u_{i_1\dots i_mj_1\dots j_pk_1\dots k_m}
 &=\sigma(j_1\dots j_pk_1\dots k_m)
  \nabla_{j_1\dots j_pk_1\dots k_m}v_{i_1\dots i_m},\\
 f_{i_1\dots i_mj_1\dots j_pk_1\dots k_m}
 &=\sigma(i_1\dots i_mj_1\dots j_p)\sigma(k_1\dots k_m)
 \nabla_{\!j_1\dots j_pk_1\dots k_m}v_{i_1\dots i_m}.
 \endaligned
 \label{(3.15)}
 \end{equation}
 By Lemma \ref{3.5}, we have
  \begin{equation}
 \nabla_{\!j_1\dots j_pk_1\dots k_m}
 v_{i_1\dots i_m}= u_{i_1\dots i_mj_1\dots j_pk_1\dots k_m}
 \pmod{\nabla^{m+p-2}v}.
 \label{(3.16)}
 \end{equation}
 Applying the operator $\sigma(i_1\dots i_mj_1\dots j_p)\sigma(k_1\dots k_m)$ to this equation and using~(\ref{(3.15)}),
 we obtain
 $$
 \sigma(i_1\dots i_mj_1\dots j_p)
 u_{i_1\dots i_mj_1\dots j_pk_1\dots k_m} =
 f_{i_1\dots i_mj_1\dots j_pk_1\dots k_m}
 \pmod{\nabla^{m+p-2}v}.
 $$
 Hence, $u$ and~$f$ satisfy the hypotheses of Lemma~2.1. Application of the lemma gives
 \begin{align}
 u_{i_1\dots i_mj_{1}\dots j_{m+p}}
 &=\sigma(i_1\dots i_m)\sigma(j_1\dots j_{m+p})
   \sum_{l=0}^m(-1)^l{p+l-1\choose m-l}{m+p\choose m-l}\nonumber\\
 &\quad\times
 f_{i_1\dots i_{m-l}j_1\dots j_{m+p}i_{m-l+1}\dots i_m}
 \pmod{\nabla^{m+p-2}v}.
 \label{(3.17)}
 \end{align}
  From~\eqref{(3.15)} and Lemma~\ref{3.5}, we deduce
 \begin{equation*}
 f_{i_1\dots i_mj_1\dots j_pk_1\dots k_m}
 =\sigma(k_1\dots k_m)
 \big(\nabla_{\!k_1\dots k_m}(d^pv)_{i_1\dots i_mj_1\dots j_p}
 \big)\pmod{\nabla^{m+p-2}v}.
 \end{equation*}
 Substituting the last expression into~\eqref{(3.17)}
 and using equality~\eqref{(3.16)}, we obtain the statement
 of the lemma.
 \end{proof}

 \begin{Lemma}\label{3.7}
 Let $u_i\in C^\infty(\otimes^{m_i}\tau')$ for
 $1\le i\le k$. If$,$ for every~$i$$,$ there exists $p_i\ge 1$
 such that
 $$
 \nabla^{p_i}u_i=0
 \pmod{\nabla^{p_1-1}u_1,\dots,\nabla^{p_k-1}u_k}
 $$
 and
 $
 u_i(x_0)=0,\,\, \nabla u_i(x_0)=0,\,\, \dots,\,\, \nabla^{p_i-1}u_i(x_0)=0
 $
 at some point $x_0$ of a connected manifold~$M$ then
 all the fields $u_i$ are identically equal to zero.
 \end{Lemma}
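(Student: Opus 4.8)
The plan is to reduce the statement to the uniqueness theorem for a homogeneous linear system of ordinary differential equations along curves, exactly as outlined for Theorem~\ref{1.1} in the Introduction. First I would unpack the congruence hypothesis: it asserts that, for each~$i$, there exist smooth bundle homomorphisms $A_{ij}^{(q)}$ $(1\le j\le k,\ 0\le q\le p_j-1)$ such that
\begin{equation}
\nabla^{p_i}u_i=\sum_{j=1}^{k}\sum_{q=0}^{p_j-1}A_{ij}^{(q)}\nabla^q u_j .
\label{P1}
\end{equation}
The essential content is that the top-order derivative $\nabla^{p_i}u_i$ of every field is expressed through the \emph{finite} collection of lower derivatives $\nabla^q u_j$ with $q\le p_j-1$; this is the closure property that makes the ODE argument work.

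Next, fix an arbitrary point $x\in M$. Since~$M$ is connected, I would choose a smooth curve $\gamma\colon[0,1]\to M$ with $\gamma(0)=x_0$ and $\gamma(1)=x$, together with a parallel orthonormal frame along~$\gamma$, and assemble the ``state'' $Y(t)$ consisting of all the components, in this frame, of the tensors $\nabla^q u_i\big(\gamma(t)\big)$ for $1\le i\le k$ and $0\le q\le p_i-1$. Writing $D/dt=\dot\gamma^{\,s}\nabla_s$ for the covariant derivative along~$\gamma$ and using that the frame is parallel, the $t$-derivative of the components of $\nabla^q u_i$ equals the components of $\dot\gamma^{\,s}\nabla_s\nabla^q u_i$. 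For $0\le q\le p_i-2$ this is the contraction of the state variable $\nabla^{q+1}u_i$ with $\dot\gamma$, a linear expression in $Y(t)$ with coefficients smooth in~$t$. For $q=p_i-1$ it is the contraction of $\nabla^{p_i}u_i$ with $\dot\gamma$, and substituting~\eqref{P1} again exhibits it linearly through the state variables $\nabla^q u_j$, $q\le p_j-1$. Hence
$$
\frac{dY}{dt}=B(t)\,Y(t),
$$
with $B(t)$ continuous (indeed smooth) in~$t$.

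The initial conditions $u_i(x_0)=0,\ \nabla u_i(x_0)=0,\ \dots,\ \nabla^{p_i-1}u_i(x_0)=0$ give $Y(0)=0$, so by uniqueness of solutions to a homogeneous linear system of ordinary differential equations $Y(t)\equiv 0$ on $[0,1]$; in particular $u_i\big(\gamma(1)\big)=u_i(x)=0$ for every~$i$. As $x\in M$ is arbitrary, all the fields $u_i$ vanish identically. There is no deep obstacle here: the only genuinely technical point is the bookkeeping involved in forming $Y$ and verifying that $DY/dt$ stays within the span of the state variables, and this is automatic once the hypothesis is written in the form~\eqref{P1}, which is precisely the assertion that the top derivatives close up on the lower ones.
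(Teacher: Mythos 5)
Your proof is correct and follows essentially the same route as the paper, which only sketches the argument: connect $x_0$ to an arbitrary point by a smooth curve, observe that the hypothesis closes the top derivatives $\nabla^{p_i}u_i$ over the lower ones, and conclude from uniqueness for a homogeneous linear ODE system with zero initial data. Your reduction to a first-order system via a parallel frame along the curve is the standard way to fill in the details the paper omits, so nothing further is needed.
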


 \begin{proof}
We present only the scheme of the proof without details.
 In order to show that~$u_i$ vanish at some point~$x_1$, we connect the
 points~$x_0$ and~$x_1$ by a smooth curve~$x(t)$. The components of
 tensors $u_i(t)=u_i\big(x(t)\big)$ satisfy a linear homogeneous
 system of ordinary differential equations with homogeneous
 initial conditions. The order of the system with respect to~$u_i(t)$
 equals~$p_i$ and the system is solved with respect to
 the highest order derivatives. This implies the statement of the
 lemma.
 \end{proof}

The last two lemmas imply the following proposition.

 \begin{Lemma}\label{3.8}
 Assume $M$ to be connected and
 $u_i\in C^\infty(S^{m_i}\tau')$$,$ $1\le i\le\nobreak k$.
 If$,$ for every~$i$$,$ there exists~$p_i$ such that
 $$
 d^{\,p_i}u_i=0
 \pmod{\nabla^{p_1-1}u_1,\dots,\nabla^{p_k-1}u_k}
 $$
 and
 $$
 u_i(x_0)=0,\quad {\nabla}u_i(x_0)=0,\quad \dots,\quad
 {\nabla}^{m_i+p_i-1}u_i(x_0)=0
 $$
 at some point~$x_0$
 then all $u_i$ are identically equal to zero.
 \end{Lemma}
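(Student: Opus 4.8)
The plan is to turn the inner-derivative hypothesis into a hypothesis on full covariant derivatives of exactly the shape demanded by Lemma~\ref{3.7}, using Lemma~\ref{3.6} as the bridge. I set $\rho_i=m_i+p_i$. The initial data then need no extra work: the hypothesis gives ${\nabla}^su_i(x_0)=0$ for all $s\le m_i+p_i-1=\rho_i-1$, which is precisely the vanishing Lemma~\ref{3.7} requires when its exponent is taken to be $\rho_i$. So everything reduces to establishing, for each $i$, a relation
\[
{\nabla}^{\rho_i}u_i=0\pmod{{\nabla}^{\rho_1-1}u_1,\dots,{\nabla}^{\rho_k-1}u_k}
\]
and then quoting Lemma~\ref{3.7}.

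To obtain this relation I would apply Lemma~\ref{3.6} with $v=u_i$, $m=m_i$, and $p=p_i$, which writes ${\nabla}^{\rho_i}u_i$, modulo ${\nabla}^{\rho_i-2}u_i$, as a fixed symmetrized combination of the $m_i$-fold covariant derivatives of $d^{p_i}u_i$. Into that combination I substitute the standing hypothesis $d^{p_i}u_i=0\pmod{{\nabla}^{p_1-1}u_1,\dots,{\nabla}^{p_k-1}u_k}$; differentiating a term of order $p_\ell-1$ in $u_\ell$ a further $m_i$ times yields a term of order at most $m_i+p_\ell-1$ in $u_\ell$. Hence
\[
{\nabla}^{\rho_i}u_i=0\pmod{\{{\nabla}^{\,m_i+p_\ell-1}u_\ell\}_\ell,\ {\nabla}^{\rho_i-2}u_i}.
\]
If all ranks coincide, $m_\ell\equiv m$, then $m_i+p_\ell-1=\rho_\ell-1$ for every $\ell$ while $\rho_i-2<\rho_i-1$, so this is already the relation required above, and Lemma~\ref{3.7} finishes the argument at once.

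The genuine difficulty, which I expect to be the main obstacle, is the order bookkeeping when the ranks differ. For an index $\ell$ with $m_\ell<m_i$ the exponent $m_i+p_\ell-1$ exceeds $\rho_\ell-1=m_\ell+p_\ell-1$, so the right-hand side still carries derivatives of $u_\ell$ of order $\ge\rho_\ell$, which are not allowed in the hypothesis of Lemma~\ref{3.7} (there each $u_\ell$ may be differentiated at most $\rho_\ell-1$ times). These surplus top-order derivatives must be removed by substituting the analogous relations already produced for those lower-rank fields and their covariant derivatives. The key combinatorial point, which I would verify by a direct order count, is that each such substitution reintroduces surplus terms only of strictly smaller rank, or of the same rank but strictly lower order; this gives a well-founded (lexicographic) descent that clears all the disallowed terms in finitely many steps, the process being essentially a triangular solve of the coupled top-order relations handled one rank level at a time. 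Once the descent is carried through and each ${\nabla}^{\rho_i}u_i$ is expressed purely modulo $\{{\nabla}^{\rho_\ell-1}u_\ell\}_\ell$, Lemma~\ref{3.7} yields $u_i\equiv0$ for all $i$, as claimed.
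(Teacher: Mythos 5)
Your proposal takes exactly the route the paper intends: the paper prints no argument for Lemma~\ref{3.8} beyond the sentence that the last two lemmas imply it, and your two steps --- Lemma~\ref{3.6} with $v=u_i$, $m=m_i$, $p=p_i$ to convert $d^{p_i}u_i\equiv 0$ into $\nabla^{m_i+p_i}u_i\equiv 0$ modulo the stated lower-order data, followed by Lemma~\ref{3.7} with exponents $\rho_i=m_i+p_i$, whose pointwise conditions at $x_0$ match the hypothesis exactly --- are precisely that deduction. You are also right that the deduction is not automatic when the ranks differ: substituting the hypothesis into Lemma~\ref{3.6} leaves terms $\nabla^{m_i+p_\ell-1}u_\ell$, which overshoot the admissible order $\rho_\ell-1$ by $m_i-m_\ell$ whenever $m_i>m_\ell$. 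The paper glosses over this point; in its only application (Section~9, where the ranks are $m$ and $m-1$ and $p_1=p_2=5m-1$, see \eqref{(7.30)}--\eqref{(7.33)}) the overshoot equals one and a single substitution of the relation for the rank-$(m-1)$ field removes it.

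One correction to your descent, however: the well-founded order you propose (``strictly smaller rank, or same rank but strictly lower order'') is not the right one. Substituting the $e$-fold differentiated relation for a surplus term $\nabla^{\rho_\ell+e}u_\ell$ (the reordering of derivatives costing only admissible lower-order terms, by the curvature formula behind Lemma~\ref{3.5}) produces terms $\nabla^{s'}u_j$ with $s'\le e+m_\ell+p_j-1$, and such a term can be surplus for a field of \emph{strictly larger} rank: with $m_i=10$, $m_\ell=0$, $m_j=5$, the surplus term $\nabla^{p_\ell+9}u_\ell$ coming from the relation for $u_i$ generates $\nabla^{p_j+8}u_j$, which again overshoots (the admissible order is $p_j+4$) even though $m_j>m_\ell$. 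The descent nevertheless terminates, because the correct monovariant is $s-p_j$ for a term $\nabla^{s}u_j$: if the term being substituted has $s-p_\ell=c$, then every generated term satisfies $s'-p_j\le c-1$ (and $s'-p_\ell\le c-2$ for the same-field remainders), while a term requires substitution only as long as $s-p_j\ge m_j\ge 0$. Hence every substitution strictly lowers this quantity on all offspring, chains of substitutions have finite length, and after finitely many rounds each $\nabla^{\rho_i}u_i$ is expressed modulo $\nabla^{\rho_1-1}u_1,\dots,\nabla^{\rho_k-1}u_k$, so that Lemma~\ref{3.7} applies as you claim. With this repair your proof is complete and supplies exactly the bookkeeping the paper leaves implicit.
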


\section{Commutation formula for~$d$ and~$\delta$.
 The operator~$\Delta$}                     

 Let the operator
 $\Delta:C^\infty(S^m\tau')\rightarrow C^\infty(S^m\tau')$
 be defined as follows:
 \begin{equation}
 (\Delta u)_{i_1\dots i_m}=g^{jk}\nabla_{jk}u_{i_1\dots i_m}.
 \label{(6.1)}
 \end{equation}
   This differential operator has the order~2 and the degree~0,
  and acts on sections of the fiber bundle~$S^*\tau'\hskip-3pt .$
  Probably, (\ref{(3.15)}) is not the best definition of the
  Laplacian, and some zero order terms should be added to the right-hand side like for the Laplacian on differential forms.
  However, the most of our statements concerning~$\Delta$
  are formulated modulo low order terms,
  and such statements are independent of low
  order terms on the right-hand side of~\eqref{(6.1)}.

 \begin{Lemma} \label{6.1}
 The operator $\Delta$ is formally self-adjoint and satisfies
 the relations
  \begin{gather}
 \Delta i=i\Delta,
 \label{(6.2)}\\
 \Delta j=j\Delta,
 \label{(6.3)}\\
 \Delta^ld^ku=d^k\Delta^lu
 \pmod{\nabla^{k+2l-2}u},
 \label{(6.4)}\\
 \Delta^l\delta^ku=\delta^k\Delta^lu
 \pmod{\nabla^{k+2l-2}u}.
 \label{(6.5)}
 \end{gather}
 \end{Lemma}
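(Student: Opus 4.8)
The plan is to separate the two exact identities \eqref{(6.2)}, \eqref{(6.3)} (together with self-adjointness), which are consequences of parallelism of the metric, from the two ``modulo lower order'' congruences \eqref{(6.4)}, \eqref{(6.5)}, which rest on controlling curvature commutators. First I would establish that $\Delta$ is formally self-adjoint by a single integration by parts: for compactly supported $u,v$, applying the divergence theorem to the vector field $W^j=g^{jk}\langle\nabla_k u,v\rangle$ gives $(\Delta u,v)_{L^2}=-\int_M g^{jk}\langle\nabla_k u,\nabla_j v\rangle\,dV$, whose right-hand side is manifestly symmetric in $u$ and $v$ by symmetry of $g^{jk}$ and of $\langle\cdot,\cdot\rangle$; hence $(\Delta u,v)=(u,\Delta v)$. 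The identities \eqref{(6.2)} and \eqref{(6.3)} are then exact consequences of $\nabla g=0$: the operators $i$ and $j$ are built algebraically from the parallel tensor $g$ by symmetrization and trace over the tensor slots, $\nabla$ commutes with symmetrization and with trace over those slots, and the contraction $g^{jk}$ in $\Delta$ acts on the disjoint derivative slots. Thus $\Delta(iu)=i(\Delta u)$ and $\Delta(ju)=j(\Delta u)$ with no error terms.

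The substance of the lemma is \eqref{(6.4)}. The plan is to reduce both $\Delta^l d^k u$ and $d^k\Delta^l u$, modulo $\nabla^{k+2l-2}u$, to one and the same algebraic operation applied to the full covariant derivative $\nabla^{k+2l}u$: contract $2l$ of the derivative slots in $l$ pairs with $g^{-1}$ (the $\Delta^l$ part) and fully symmetrize the remaining $k$ derivative slots together with the $m$ tensor slots (the $d^k$ part). Both compositions realize exactly this contraction/symmetrization pattern — their principal symbols agree, being $(-|\xi|^2)^l(\sqrt{-1})^k i_\xi^k$ — and the only difference between them is the order in which the $k+2l$ covariant differentiations are carried out. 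To perform this reduction I would pass from the iterated symmetrized derivatives defining $d^k$ and the contracted second derivatives defining $\Delta^l$ to raw covariant derivatives by means of Lemma~\ref{3.5} and Lemma~\ref{3.6}, each such passage introducing only terms of order $k+2l-2$.

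The main obstacle, and the point requiring care, is verifying that the discrepancy between the two orderings of the derivatives drops the order by two rather than by one. This is exactly governed by \eqref{(3.14)}: commuting two adjacent covariant derivatives replaces $\nabla_{ab}$ by $\nabla_{ba}$ at the cost of a curvature term $R\cdot u$ carrying no derivatives, so any transposition of derivative slots in $\nabla^{k+2l}u$ produces a term of order $k+2l-2$. Iterating such transpositions to move the ``$d$-derivatives'' past the ``$\Delta$-derivatives'' therefore yields $\Delta^l d^k u=d^k\Delta^l u\pmod{\nabla^{k+2l-2}u}$, which is \eqref{(6.4)}.

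Finally, \eqref{(6.5)} follows from \eqref{(6.4)} by duality. Taking formal $L^2$-adjoints and using Theorem~\ref{3.1}, so that $d^*=-\delta$, together with the self-adjointness of $\Delta$ already established, turns the relation $\Delta^l d^k-d^k\Delta^l=L$ with $\operatorname{ord}L\le k+2l-2$ into $(-1)^k(\delta^k\Delta^l-\Delta^l\delta^k)=L^*$. Since the formal adjoint of an operator of order $k+2l-2$ is again of order $k+2l-2$, this gives $\Delta^l\delta^k u=\delta^k\Delta^l u\pmod{\nabla^{k+2l-2}u}$, completing the proof.
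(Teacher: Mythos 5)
Your proposal is correct and takes essentially the same route as the paper: formal self-adjointness from the integration-by-parts identity $(\Delta u,v)_{L^2}=-(\nabla u,\nabla v)_{L^2}$, the exact identities \eqref{(6.2)}--\eqref{(6.3)} from $\nabla g=0$ (the paper checks \eqref{(6.2)} in coordinates and gets \eqref{(6.3)} by duality), the commutation \eqref{(6.4)} by observing via \eqref{(3.14)} and Lemma~\ref{3.5} that transposing derivative slots in $\nabla^{k+2l}u$ costs only curvature terms of order $k+2l-2$, and \eqref{(6.5)} by passing to formal adjoints. The sole organizational difference is that the paper establishes \eqref{(6.4)} by induction on $k$ and $l$ from the base case $k=l=1$, whereas you run the slot-permutation argument for all $k,l$ simultaneously; both rest on the same permutation lemma, so this is a cosmetic, not substantive, distinction.
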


 \begin{proof}
 As follows from  Green's formula,
 for $u,v\in C^\infty_0(S^*\tau')$,
 $$
 (\Delta u,v)_{L^2}=-(\nabla u,\nabla v)_{L^2}.
 $$
  This implies the first statement of the lemma since the
  right-hand side of the last formula is symmetric in~$u$ and~$v$.
  Equality~\eqref{(6.2)} is proved by a direct calculation
  in coordinates which is omitted, and \eqref{(6.3)} follows from~\eqref{(6.2)} since these relations are dual to each other.

 We now prove~\eqref{(6.4)} in the case of $k=l=1$.
 For $u\in C^\infty(S^m\tau')$, we have
  \begin{align*}
 (\Delta du)_{i_1\dots i_{m+1}}= \sigma(i_1\dots i_{m+1})(g^{jk}
 \nabla_{jki_{m+1}}u_{i_1\dots i_m}),\\
 (d\Delta u)_{i_1\dots i_{m+1}}= \sigma(i_1\dots i_{m+1})
 (g^{jk}\nabla_{i_{m+1}jk}u_{i_1\dots i_m}).
 \end{align*}
 By Lemma 7.1, the right-hand sides of these equalities coincide modulo ${\nabla}u$. This proves (8.4) for $k=l=1$. In the general case, (8.4) is proved by induction in $k$ and $l$.
Formula (8.5) follows from (8.4) by conjugation.
 \end{proof}

 We define the operator $R\in\Hom(S^*\tau',S^*\tau')$  by setting
 $$
 (Ru)_{i_1\dots i_m}= \sum_{a=1}^mg^{ij}R_{ii_a}u_{ji_1\dots\widehat{i}_a\dots i_m}+2
 \sum_{1\le a<b\le m}g^{ip}g^{jq}R_{ii_aji_b}u_{pqi_1\dots\widehat{i}_a
 \dots\widehat{i}_b\dots i_m}
 $$
 for $u\in S^m\tau'$. Here $(R_{ijkl})$~is the curvature tensor,
 and $\big(R_{ij}=g^{kl}R_{kijl}\big)$~is the Ricci tensor.
 The second sum on the right-hand side
 is absent in the case of~$m=1$, .

 \begin{Lemma}\label{6.2}
 The following commutation formula holds
 on $C^\infty(S^m\tau')$\dv
 $$
 \delta d=\frac1{m+1}(md\delta+\Delta-R).
 $$
 \end{Lemma}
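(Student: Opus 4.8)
The plan is to verify the identity by a direct computation in local coordinates, turning $\delta d$ into the three claimed terms by commuting the two covariant derivatives with the Ricci identity~\eqref{(3.14)}. First I would expand $\delta du$ for $u\in C^\infty(S^m\tau')$. Since $u$ is symmetric,
$$(m+1)(du)_{ki_1\dots i_m}=\nabla_k u_{i_1\dots i_m}+\sum_{a=1}^m\nabla_{i_a}u_{ki_1\dots\widehat{i_a}\dots i_m},$$
and applying $\delta$ (using $\nabla g=0$) gives
$$(m+1)(\delta du)_{i_1\dots i_m}=g^{jk}\nabla_{jk}u_{i_1\dots i_m}+\sum_{a=1}^m g^{jk}\nabla_{ji_a}u_{ki_1\dots\widehat{i_a}\dots i_m}.$$
The first summand is exactly $(\Delta u)_{i_1\dots i_m}$ by~\eqref{(6.1)}, so it remains to identify the sum.

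Next I would commute the two derivatives in each summand, writing $\nabla_{ji_a}=\nabla_{i_aj}+(\nabla_{ji_a}-\nabla_{i_aj})$. For the transposed (``principal'') part,
$$g^{jk}\nabla_{i_aj}u_{ki_1\dots\widehat{i_a}\dots i_m}=\nabla_{i_a}(\delta u)_{i_1\dots\widehat{i_a}\dots i_m}.$$
Since $\delta u$ is symmetric, the full symmetrization defining $d\delta u$ reduces to averaging over which index carries the new derivative, so $\sum_{a=1}^m\nabla_{i_a}(\delta u)_{i_1\dots\widehat{i_a}\dots i_m}=m\,(d\delta u)_{i_1\dots i_m}$. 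This produces the term $\tfrac{m}{m+1}\,d\delta u$.

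Finally I would collect the curvature terms. Applying~\eqref{(3.14)} (with first differentiation index $j$ and second $i_a$) to $u_{ki_1\dots\widehat{i_a}\dots i_m}$ produces one contribution per tensor index: one acting on the contracted index $k$, namely $g^{jk}R^p_{ki_aj}u_{pi_1\dots\widehat{i_a}\dots i_m}$, and one acting on each surviving index $i_b$ ($b\ne a$), namely $g^{jk}R^p_{i_bi_aj}$ times $u$ with $i_b$ replaced by $p$. In the first term the curvature is contracted by $g^{jk}$ on two of its slots; using the antisymmetries $R_{ijkl}=-R_{jikl}=-R_{ijlk}$ and the definition $R_{ij}=g^{kl}R_{kijl}$, it collapses to the Ricci contraction $g^{ij}R_{ii_a}u_{ji_1\dots\widehat{i_a}\dots i_m}$, which is precisely the $a$-th summand of the first sum in the definition of $R$. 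In the second term the index $k$ remains on $u$, and summing over $a$ and $b\ne a$ each unordered pair $\{a,b\}$ is counted twice, producing the coefficient $2$ and the Riemann contraction that constitutes the second sum in $R$. Collecting everything, the curvature contribution equals $-(Ru)_{i_1\dots i_m}$, and division by $m+1$ yields the stated identity.

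The main obstacle will be exactly this last bookkeeping: keeping the order of the two differentiation indices in~\eqref{(3.14)} consistent throughout, and tracking the signs introduced by the antisymmetries and by the pair symmetry $R_{ijkl}=R_{klij}$, so that the contracted curvature terms emerge with precisely the sign and normalization of $-R$ as defined before the lemma. The case $m=1$, where the second (Riemann) sum is absent and the identity reduces to the classical Weitzenböck formula $2\delta d=d\delta+\Delta-R$ for $1$-forms, provides a useful consistency check on these signs.
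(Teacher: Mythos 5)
Your proof is correct and is essentially the paper's own argument: expand $(m+1)\delta du$ in coordinates, split off $\Delta u$, recognize the transposed-derivative sum as $m\,d\delta u$, and use the Ricci identity~\eqref{(3.14)} to convert the remaining commutators into exactly the Ricci and Riemann contractions defining $R$ (the paper merely orients the commutator the other way, denoting the curvature sum by $A$ and checking $A=Ru$, which is your $-Ru$ contribution with the opposite sign convention). Your sign bookkeeping, including the factor $2$ arising from counting each unordered pair of indices twice, matches the paper's computation exactly.
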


 \begin{proof}
For $u\in C^\infty(S^m\tau')$$,$ we have
 \begin{align*}
 (m+1)(\delta du)_{i_1\dots i_m}
 &=(m+1)g^{i_{m+1}i_{m+2}}\nabla_{i_{m+2}}(du)_{i_1\dots i_{m+1}}\\
 &=g^{i_{m+1}i_{m+2}}
 \Bigg(\nabla_{i_{m+2}i_{m+1}}u_{i_1\dots i_m}+
      \sum_{a=1}^m\nabla_{i_{m+2}i_a}u_{i_1\dots\widehat{i_a}\dots i_{m+1}}
 \Bigg).
  \end{align*}
  This can be written in the form
  \begin{align*}
  (m+1)(\delta du)_{i_1\dots i_m}
 &=(\Delta u)_{i_1\dots i_m}+\sum_{a=1}^m{\nabla}_{i_a}
 \big(g^{i_{m+1}i_{m+2}}{\nabla}_{i_{m+2}}u_{i_1
 \dots\widehat{i_a}\dots i_{m+1}}
 \big)\\
 &\qquad-\sum_{a=1}^mg^{i_{m+1}i_{m+2}}
         \big(\nabla_{i_ai_{m+2}}-\nabla_{i_{m+2}i_a}
         \big)u_{i_1\dots\widehat{i_a}\dots i_{m+1}}.
 \end{align*}
  Denote the last sum on the right-hand side of this equality by $A_{i_1\dots i_m}$ and rewrite the formula as
   \begin{equation}
 (m+1)(\delta du)_{i_1\dots i_m} =(\Delta u)_{i_1\dots i_{m}}+
 m(d\delta u)_{i_1\dots i_{m}}-A_{i_1\dots i_m}.
 \label{(6.6)}
 \end{equation}
 According to~\eqref{(3.14)}, we have
 \vskip-0.7pt\noindent
 $$
 A_{i_1\dots i_m}= g^{i_{m+1}i_{m+2}}
 \sum_{a=1}^m
 \sum_{\stackrel{b=1}{b\ne a}}^{m+1}R^p_{i_bi_{m+2}i_a}
 u_{i_1\dots\widehat{i}_a\dots i_{b-1}pi_{b+1}\dots i_{m+1}}.
 $$
We distinguish the summands corresponding to $b=m+1$. Then
 \begin{align*}
 A_{i_1\dots i_m}
 &=\sum_{a=1}^mg^{i_{m+1}i_{m+2}}
 R^p_{i_{m+1}i_{m+2}i_a}u_{i_1\dots
 \widehat{i}_a\dots i_mp}\\
 &\qquad+2
 \sum_{1\le a<b\le m}g^{i_{m+1}i_{m+2}}R^p_{i_bi_{m+2}i_a}
 u_{i_1\dots\widehat{i}_a\dots\widehat{i}_b\dots i_{m+1}p}
 =(Ru)_{i_1\dots i_m}.
 \end{align*}
 The statement of the lemma immediately follows by
 substituting the last expression into~\eqref{(6.6)}.
 \end{proof}

  We are going to use only the following corollary of
  Lemma~\ref{6.2}:
 \begin{equation}
 \delta du= \frac{m}{m+1}d\delta u+
 \frac1{m+1}\Delta u
 \pmod{u}\ \ \mbox{for}\ \ u\in C^\infty(S^m\tau').
 \label{(6.7)}
 \end{equation}

 \begin{Lemma}\label{6.3}
 For arbitrary nonnegative integers $m,$ $k,$ and~$l$$,$ the equality
  \begin{equation}
 \delta^ld^ku= \frac{l!(m+k-l)!}{(m+k)!}
 \sum_p{k\choose p}{m\choose l-p}d^{k-p}\Delta^p\delta^{l-p}u
 \pmod{\nabla^{k+l-2}u}
 \label{(6.8)}
 \end{equation}
 holds for all $u\in C^\infty(S^m\tau')$. The summation
 in~\eqref{(6.8)} is taken over all integers~$p$ under the agreement$:$
  \begin{equation}
  {i\choose j}=0
  \ \ \mbox{for $j<0$\, or\, $i<j$},\,\,\,\mbox{and}\,\,\,
 i!=0
  \ \ \mbox{for $i<0$}.
  \label{(6.9)}
 \end{equation}
 \end{Lemma}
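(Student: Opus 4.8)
The plan is to prove the formula by induction on~$k$, reducing the inductive step to a single commutation move packaged as the case $k=1$. First I would record the two tools supplied above: the core relation \eqref{(6.7)}, namely $\delta d u\equiv\frac{m}{m+1}d\delta u+\frac{1}{m+1}\Delta u\pmod u$ on $S^m\tau'$, and the facts that $\Delta$ commutes with $d$ and with $\delta$ modulo lower order terms (Lemma~\ref{6.1}, formulas \eqref{(6.4)} and \eqref{(6.5)}). Throughout, the order bookkeeping for the $\pmod{\nabla^{k+l-2}u}$ remainder is routine: every use of \eqref{(6.7)} or of the $\Delta$-commutations drops the order by exactly two, and one checks that the accumulated error never exceeds $k+l-2$. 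The base cases $k=0$ and $l=0$ are verified directly, each surviving sum collapsing to its single $p=0$ term.

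As a preliminary step I would establish the case $k=1$: for $u\in C^\infty(S^m\tau')$ and every $j\ge 0$,
\[
\delta^j d u \equiv \frac{m-j+1}{m+1}\,d\delta^j u + \frac{j}{m+1}\,\Delta\delta^{j-1}u \pmod{\nabla^{j-1}u}.
\]
This is exactly the $l=j$ instance of the asserted identity after the factorial prefactor is simplified, and it follows by induction on~$j$: writing $\delta^j d u=\delta(\delta^{j-1}du)$, applying the inductive hypothesis, then using \eqref{(6.7)} on the rank-$(m-j+1)$ tensor $\delta^{j-1}u$ and the relation $\delta\Delta\equiv\Delta\delta$ from \eqref{(6.5)} to simplify, the two $\Delta\delta^{j-1}u$-contributions combine with coefficients $\frac{1}{m+1}$ and $\frac{j-1}{m+1}$ into $\frac{j}{m+1}$. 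The base cases $j=0,1$ are immediate, the latter being \eqref{(6.7)} itself.

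For the main induction on~$k$ I would peel off one~$d$: $\delta^l d^k u=\delta^l d^{k-1}(du)$, and apply the inductive hypothesis to $du\in C^\infty(S^{m+1}\tau')$ with exponents $(k-1,l)$. The prefactor $\frac{l!((m+1)+(k-1)-l)!}{((m+1)+(k-1))!}$ is already the target $\frac{l!(m+k-l)!}{(m+k)!}$, so it remains to process each summand $d^{k-1-p}\Delta^p\delta^{l-p}(du)$. Using the $k=1$ case on $\delta^{l-p}du$ (with $j=l-p$) and then moving $\Delta^p$ leftward past the freed~$d$ via \eqref{(6.4)}, each such summand splits into a $d^{k-p}\Delta^p\delta^{l-p}u$ term and a $d^{k-(p+1)}\Delta^{p+1}\delta^{l-(p+1)}u$ term. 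Reindexing the second family by $q=p+1$ and collecting the coefficient of $d^{k-q}\Delta^q\delta^{l-q}u$, the induction closes once I verify the algebraic identity
\[
\binom{k-1}{q}\binom{m+1}{l-q}(m-l+q+1)+\binom{k-1}{q-1}\binom{m+1}{l-q+1}(l-q+1)=(m+1)\binom{k}{q}\binom{m}{l-q}.
\]

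The main obstacle is this last identity, but it dissolves upon noting the two factorial simplifications $\binom{m+1}{l-q}(m-l+q+1)=(m+1)\binom{m}{l-q}$ and $\binom{m+1}{l-q+1}(l-q+1)=(m+1)\binom{m}{l-q}$; the left-hand side then factors as $(m+1)\binom{m}{l-q}\big[\binom{k-1}{q}+\binom{k-1}{q-1}\big]$, and Pascal's rule $\binom{k-1}{q}+\binom{k-1}{q-1}=\binom{k}{q}$ finishes it. The conventions \eqref{(6.9)} guarantee that the boundary values of~$q$ and the out-of-range choices of~$p$ contribute zero automatically, so no separate treatment of edge terms is needed beyond the base cases already handled.
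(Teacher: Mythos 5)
Your proof is correct, but it is the mirror image of the paper's argument rather than a reproduction of it. The paper's workhorse is the $l=1$ family \eqref{(6.10)}, $\delta d^k u=\frac{m}{m+k}d^k\delta u+\frac{k}{m+k}d^{k-1}\Delta u\pmod{\nabla^{k-1}u}$, proved by induction on $k$ via $\delta d^{k+1}u=\delta d^k(du)$; the main induction is then on $l$, peeling one $\delta$ off the left of $\delta^{l+1}d^ku$ and pushing it through each term $d^{k-p}\Delta^p\delta^{l-p}u$ by means of \eqref{(6.10)} and \eqref{(6.5)}. You instead take as workhorse the $k=1$ family (many $\delta$'s past a single $d$, $\delta^l du\equiv\frac{m-l+1}{m+1}d\delta^lu+\frac{l}{m+1}\Delta\delta^{l-1}u$, which is indeed the $k=1$ instance of \eqref{(6.8)} with the prefactor absorbed) and run the main induction on $k$, peeling the innermost $d$ and applying the hypothesis to $du\in C^\infty(S^{m+1}\tau')$. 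The two routes are exchanged by the formal duality of $d$ and $\delta$ and cost about the same, but your arrangement has two small advantages: the prefactor $\frac{l!(m+k-l)!}{(m+k)!}$ is invariant under $(m,k)\mapsto(m+1,k-1)$, so no prefactor bookkeeping enters the inductive step (in the paper's induction on $l$ the prefactor changes and must be absorbed into the coefficients), and you make fully explicit the coefficient identity --- the two absorption identities $\binom{m+1}{l-q}(m-l+q+1)=(m+1)\binom{m}{l-q}=\binom{m+1}{l-q+1}(l-q+1)$ followed by Pascal's rule --- which the paper compresses into the phrase about combining the first summand of the $p$th term with the second summand of the $(p-1)$th term. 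I verified your identity and your error bookkeeping: the worst remainders (applying $d^{k-1-p}\Delta^p$ to the order-$(l-p-1)$ remainder of the $k=1$ case, and commuting $\Delta^p$ past the freed $d$ via \eqref{(6.4)}) land exactly at order $k+l-2$, and the conventions \eqref{(6.9)} correctly dispose of the boundary values of $p$ and $q$, including the degenerate regime $l>m+k$ where both sides vanish.
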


 \begin{proof}
Equality~\eqref{(6.8)} is trivial for $k=0$
 or $l=0$. For $k=l=1$, it coincides with~\eqref{(6.7)}.
 In the case of $l=1$ and and of an arbitrary~$k$,~\eqref{(6.8)} looks as follows:
 \begin{equation}
 \delta d^ku =\frac{m}{m+k}d^k\delta u+
 \frac{k}{m+k}d^{k-1}\Delta u
 \pmod{\nabla^{k-1}u}.
 \label{(6.10)}
 \end{equation}
  The trivial case $m=k=0$ is not considered here.
 We prove~\eqref{(6.10)} by induction in~$k$.
 Assume~\eqref{(6.10)} to be valid for some $k\ge 1$.
 Then
 \begin{align*}
 \delta d^{k+1}u
 &=\delta d^k(du)= \bigg(\frac{m+1}{m+k+1}d^k\delta+
       \frac{k}{m+k+1}d^{k-1}
      \Delta\pmod{\nabla^{k-1}}
 \bigg)\,du\\
 &=\frac{m+1}{m+k+1}d^k\delta\,du+
  \frac{k}{m+k+1}d^{k-1}\Delta\,du
 \pmod{\nabla^{k}u}.
 \end{align*}
Taking~\eqref{(6.4)} and~\eqref{(6.7)} into account, this gives
  \begin{align*}
 \delta d^{k+1}u
 &=\frac{m+1}{m+k+1}d^k
 \bigg(\frac{m}{m+1}\,d\delta u+
       \frac{1}{m+1}\,\Delta\, u\pmod{u}
 \bigg)\\
 &\qquad+\frac{k}{m+k+1}d^k\Delta\,u
 \pmod{\nabla^ku}\\
 &=\frac{m}{m+k+1}d^{k+1}\,\delta u+
   \frac{k+1}{m+k+1}d^k\,\Delta\, u
 \pmod{\nabla^ku}.
 \end{align*}
  The last relation coincides with~\eqref{(6.10)} for $k:=k+1$.
 Hence,~\eqref{(6.10)} is proved.

 Equality~\eqref{(6.8)} is trivial for $l>k+m$ since, in this case,
 both its sides are equal to zero. Hence it suffices to prove~\eqref{(6.8)}
 for $ 1\le l\le k+m $.
 We use induction in~$l$. For $l=1$, equality~\eqref{(6.8)} is already
 established. Assume now that~\eqref{(6.8)} is satisfied
 for some $1\le l<k+m$. Then
 $$
 \delta^{l+1}d^ku
  =\delta(\delta^ld^ku)
  =\frac{l!(m+k-l)!}{(m+k)!}
 \sum_p{k\choose p}{m\choose l-p}
 (\delta d^{k-p})\Delta^p\delta^{l-p}u
 \pmod{\nabla^{k+l-1}u}.
 $$
  Using~\eqref{(6.10)} and~\eqref{(6.5)}, we transform the last formula to the following one:
 \begin{align*}
 \delta^{l+1}d^ku
 &=\frac{l!(m+k-l)!}{(m+k)!}
   \sum_p{k\choose p}{m\choose l-p}\\
 &\times
 \bigg[\frac{m-l+p}{m-l+k}d^{k-p}\Delta^p\delta^{l-p+1}u
 +
       \frac{k-p}{m-l+k}d^{k-p-1}\Delta^{p+1}\delta^{l-p}u
  \bigg]\pmod{\nabla^{k+l-1}u}.
 \end{align*}
   Combining the first summand in the brackets of the
  $ p$th term and the second summand of the $(p-1)$th term, we arrive to~\eqref{(6.8)} for $l:=l+1$.
 \end{proof}

 \pagebreak
  \begin{Lemma}\label{6.4}
 For arbitrary nonnegative integers~$m$ and~$k$$,$ the equality
 \begin{align}
 \hskip-4mm\delta^kiu=\frac1{(m\+1)(m\+2)}
 &\Big(2k(m\_k\+2)d\delta^{k\_1}u\+k(k\_1)\Delta\delta^{k\_2}u\nonumber\\
 \noalign{\vskip-7pt}
       &\quad\+(m\_k\+1)(m\_k\+2)i\delta^ku
  \Big)\pmod{\nabla^{k\_2}u}
 \label{(6.11)}
 \end{align}
  is valid for all $u\in C^\infty(S^m\tau')$.
 If $m+k\ge 2$ then
  \begin{align}
 \hskip-5mm
 jd^ku=\frac1{(m+k-1)(m+k)}
 &\Big(2kmd^{k-1}\delta u+k(k-1)d^{k-2}\Delta u\nonumber\\
 \noalign{\vskip-7pt}
       &\qquad+m(m-1)d^kju
  \Big)\pmod{\nabla^{k-2}u}.
 \label{(6.12)}
 \end{align}
 \end{Lemma}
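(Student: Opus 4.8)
The plan is to prove \eqref{(6.12)} first, by induction on $k$, and then to deduce \eqref{(6.11)} from it by duality, in the same spirit in which the two formulas of each dual pair in Lemmas~\ref{3.2}--\ref{3.4} were treated. The base case $k=1$ of \eqref{(6.12)} is precisely formula~\eqref{(3.6)} of Lemma~\ref{3.3}: under convention~\eqref{(6.9)} the term $k(k-1)d^{k-2}\Delta u$ drops out, the modulus $\nabla^{k-2}u=\nabla^{-1}u$ is empty, and $\frac1{m(m+1)}\big(2m\,\delta u+m(m-1)\,dju\big)$ reduces to $\frac2{m+1}\delta u+\frac{m-1}{m+1}dju$, which is~\eqref{(3.6)}. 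Since \eqref{(6.11)} and \eqref{(6.12)} are mutually adjoint (up to an overall sign and a relabeling of rank, see below), it will suffice to establish \eqref{(6.12)}.

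For the inductive step I would write $jd^{k+1}u=(jd)(d^ku)$ and apply~\eqref{(3.6)} to the field $d^ku\in C^\infty(S^{m+k}\tau')$, i.e.\ with the rank $m$ of~\eqref{(3.6)} replaced by $m+k$:
\begin{equation*}
jd^{k+1}u=\frac{2}{m+k+1}\,\delta(d^ku)+\frac{m+k-1}{m+k+1}\,d\,j(d^ku).
\end{equation*}
Into the first summand I would substitute the case $l=1$ of Lemma~\ref{6.3}, namely~\eqref{(6.10)}, $\delta d^ku=\frac{m}{m+k}d^k\delta u+\frac{k}{m+k}d^{k-1}\Delta u\pmod{\nabla^{k-1}u}$; into the second summand I would substitute the inductive hypothesis~\eqref{(6.12)} for $jd^ku$ and then apply $d$. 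A pleasant feature here is that applying $d$ on the left to the canonical terms requires no commutation at all, since $d\cdot d^{k-1}\delta u=d^k\delta u$, $d\cdot d^{k-2}\Delta u=d^{k-1}\Delta u$, and $d\cdot d^kju=d^{k+1}ju$; moreover $d$ raises the order of the error term from $\le k-2$ to $\le k-1$, which is exactly the modulus $\nabla^{(k+1)-2}u=\nabla^{k-1}u$ demanded at step $k+1$. Collecting the coefficients of $d^k\delta u$, $d^{k-1}\Delta u$, and $d^{k+1}ju$ yields coefficients $\frac{2(k+1)m}{(m+k)(m+k+1)}$, $\frac{(k+1)k}{(m+k)(m+k+1)}$, and $\frac{m(m-1)}{(m+k)(m+k+1)}$, which is exactly~\eqref{(6.12)} with $k$ replaced by $k+1$. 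This coefficient arithmetic is the only computational part and is routine.

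Finally I would obtain~\eqref{(6.11)} by taking formal adjoints in~\eqref{(6.12)}, using $d^*=-\delta$, $\delta^*=-d$ (Theorem~\ref{3.1}), the mutual adjointness of $i$ and $j$, and the self-adjointness of $\Delta$ (Lemma~\ref{6.1}). The adjoint of $jd^k\colon S^m\tau'\to S^{m+k-2}\tau'$ is $(-1)^k\delta^ki$, while the adjoints of the right-hand operators $d^{k-1}\delta$, $d^{k-2}\Delta$, $d^kj$ are $(-1)^kd\delta^{k-1}$, $(-1)^k\Delta\delta^{k-2}$, $(-1)^ki\delta^k$, so the common factor $(-1)^k$ cancels, and passing to adjoints preserves the order of the error term, hence the modulus $\nabla^{k-2}$. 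The one point that demands care—more a bookkeeping subtlety than a genuine obstacle—is the rank: the resulting identity holds for $\delta^ki$ on $S^{m+k-2}\tau'$, so writing $M=m+k-2$ for the rank appearing in~\eqref{(6.11)} and substituting $m=M-k+2$ into the coefficients of~\eqref{(6.12)} converts $\frac1{(m+k-1)(m+k)}$, $2km$, and $m(m-1)$ into $\frac1{(M+1)(M+2)}$, $2k(M-k+2)$, and $(M-k+1)(M-k+2)$, which are exactly the coefficients of~\eqref{(6.11)}. Thus the entire difficulty of the lemma is concentrated in keeping the moduli consistent through the induction and matching the rank correctly when dualizing.
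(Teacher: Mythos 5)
Your proof is correct and takes essentially the same route as the paper: reduce \eqref{(6.11)} to \eqref{(6.12)} by duality, then induct on $k$ starting from \eqref{(3.6)} and using the $l=1$ case \eqref{(6.10)} of Lemma~\ref{6.3}. The only (harmless) difference is that you factor the inductive step as $jd^{k+1}u=(jd)(d^ku)$, so that the left factor $d$ in the second summand needs no commutation, whereas the paper writes $jd^{k+1}u=(jd^k)(du)$ and then commutes $\delta$, $\Delta$, and $j$ past $du$ via Lemmas~\ref{6.3}, \ref{6.1}, and \ref{3.3}; your explicit sign and rank bookkeeping for the dualization spells out what the paper leaves implicit.
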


 \begin{proof}
These equalities are dual to each other.
Hence it suffices to prove the second one.

 We  prove~\eqref{(6.12)} by induction in~$k$.
 This equality is trivial for $k=0$ and coincides with~(3.6)
 for $k=1$. Assume~\eqref{(6.12)} to be valid for some $k\ge 1$.
 Then
  \begin{align*}
 jd^{k+1}u
 &=(jd^k)du
 =\frac1{(m+k)(m+k+1)}
 \Big(2k(m+1)d^{k-1}\delta du+k(k-1)d^{k-2}\Delta du\\
 \noalign{\vskip-7pt}
      &\kern60mm +m(m+1)d^kjdu
 \Big)\pmod{\nabla^{k-1}u}.
 \end{align*}
  \goodbreak

 Transforming each summand on the right-hand side
 according to Lemmas~\ref{6.3},~\ref{6.1}, and~3.3,
 respectively, we arrive to~\eqref{(6.12)}
 for $ k:=k+1$.
 \end{proof}

\section{Proof of theorem~1.1 in the case of
  $n=\dim\,M\ge3$}

  In the case of $m=0$, equation~(1.3) reduces to $ du=0 $ for a scalar
  function~$u$, and Theorem~1.1 is obvious in this case. Hence
  we assume $ m\ge 1$ in this section.

  Roughly speaking, the next lemma allows us to eliminate~$u$
  from equations~(1.2)--(1.3).

 \begin{Lemma}\label{7.1}
 Let $u\in C^\infty(S^m\tau')$ and
  $v\in C^\infty(S^{m-1}\tau')$ satisfy~{\rm(1.2)--(1.3)}.
 Then
 \begin{gather}
 (n+2m-4)d^{m+1}v=i
 \big((m-1)d^m\delta v-d^{m-1}\Delta v
 \big)\pmod{\nabla^mu},
 \label{(7.1)}\\
 jv=0,
 \label{(7.2)}\\
 v=0\pmod{\nabla u}.
 \label{(7.3)}
 \end{gather}
 \end{Lemma}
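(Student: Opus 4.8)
The plan is to extract $v$ explicitly from the two hypotheses and then push the resulting first-order relation through $d^m$. First I would dispose of \eqref{(7.3)} and \eqref{(7.2)} directly. Since $ju=0$, the orthogonal projection $p$ onto $\Ker j$ fixes $u$, so $pu=u$; and since $du=iv$ lies in $\Ran i$, which $p$ annihilates, we have $pdu=0$. Feeding $pu=u$ and $pdu=0$ into the first formula of \eqref{(3.8)}, namely $dp=pd+\frac{m}{n+2m-2}\,i\delta p$, yields $du=pdu+\frac{m}{n+2m-2}\,i\delta u=\frac{m}{n+2m-2}\,i\delta u$. As $du=iv$ and $i$ is a monomorphism (Lemma~\ref{2.3}), this gives the exact identity $v=\frac{m}{n+2m-2}\,\delta u$, which is precisely \eqref{(7.3)} since the right-hand side is a first-order operator applied to $u$. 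Then \eqref{(7.2)} is immediate: $jv=\frac{m}{n+2m-2}\,j\delta u=\frac{m}{n+2m-2}\,\delta ju=0$, using the commutation $j\delta=\delta j$ from \eqref{(3.1)}.

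The heart of the matter is \eqref{(7.1)}, and the central computation is to evaluate $\Delta u$ modulo zeroth-order terms in $u$. I would start from \eqref{(6.7)}, rewritten as $\Delta u=(m+1)\delta du-m\,d\delta u\pmod u$. For the first term I use $du=iv$ and the commutation $\delta i=\frac{2}{m+1}d+\frac{m-1}{m+1}i\delta$ on $S^{m-1}\tau'$ from \eqref{(3.5)}, giving $\delta du=\frac{2}{m+1}dv+\frac{m-1}{m+1}i\delta v$; for the second I substitute $\delta u=\frac{n+2m-2}{m}v$ from the previous step, so $d\delta u=\frac{n+2m-2}{m}dv$. Collecting the coefficients of $dv$ produces $2-(n+2m-2)=-(n+2m-4)$, and I obtain the key relation
\[
\Delta u=-(n+2m-4)\,dv+(m-1)\,i\delta v\pmod u.
\]

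Finally I would differentiate. Rearranging the last display as $(n+2m-4)\,dv=(m-1)\,i\delta v-\Delta u\pmod u$ and applying $d^m$, while commuting $i$ past $d$ via $di=id$ from \eqref{(3.1)}, turns the congruence mod $u$ into one mod $\nabla^m u$ and gives $(n+2m-4)\,d^{m+1}v=(m-1)\,i\,d^m\delta v-d^m\Delta u\pmod{\nabla^m u}$. It remains to identify $d^m\Delta u$ with $i\,d^{m-1}\Delta v$ modulo $\nabla^m u$: applying $\Delta$ to $du=iv$ and using $\Delta i=i\Delta$ from \eqref{(6.2)} together with $\Delta d=d\Delta\pmod{\nabla u}$ from \eqref{(6.4)} yields $d\Delta u=i\Delta v\pmod{\nabla u}$, and applying $d^{m-1}$ (again commuting $i$ past $d^{m-1}$) gives $d^m\Delta u=i\,d^{m-1}\Delta v\pmod{\nabla^m u}$. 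Substituting this into the previous relation produces exactly \eqref{(7.1)}. The main obstacle is purely bookkeeping: pinning down the $\Delta u$ formula with the correct coefficient $-(n+2m-4)$, and tracking how each application of a first- or higher-order operator degrades the order in the $\pmod{\,\cdot\,}$ remainder so that every congruence lands precisely at the advertised $\nabla^m u$ level.
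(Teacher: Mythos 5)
Your proposal is correct, and for the main assertion \eqref{(7.1)} it takes a genuinely different, and noticeably shorter, route than the paper. For \eqref{(7.2)} and \eqref{(7.3)} the two arguments essentially coincide: the paper applies $j$ to $du=iv$ and uses \eqref{(3.6)} together with Lemma~\ref{2.2}, while you feed $pu=u$ and $pdu=0$ into \eqref{(3.8)}; both land on the same exact identity $v=\frac{m}{n+2m-2}\,\delta u$ (the paper's \eqref{(7.5)}), after which $jv=0$ follows from $j\delta=\delta j$ in \eqref{(3.1)}. The real divergence is in \eqref{(7.1)}: the paper solves $du=iv$ for the highest-order derivatives $\nabla^{m+1}u$ by Lemma~\ref{3.6} (resting on the combinatorial inversion of Lemma~\ref{2.1}), contracts with $g^{i_{m-1}i_m}$ using $ju=0$, symmetrizes, and evaluates three alternating binomial sums to reach \eqref{(7.9)}, and it must treat $m=1$ by a separate computation because the contraction step requires two free indices on $u$. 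You instead combine the commutation relation \eqref{(6.7)} with $\delta(iv)$ computed from \eqref{(3.5)} and with the exact $d\delta u=\frac{n+2m-2}{m}\,dv$, obtaining the second-order identity $\Delta u=-(n+2m-4)\,dv+(m-1)\,i\delta v\pmod{u}$, and then simply apply $d^m$, commuting $i$ past $d$ via \eqref{(3.1)} and identifying $d^m\Delta u=i\,d^{m-1}\Delta v\pmod{\nabla^m u}$ via \eqref{(6.2)} and \eqref{(6.4)}; your tracking of how each differentiation degrades the remainder is right, and every congruence does land at $\nabla^m u$. Your approach buys three things: no multi-index combinatorics, a uniform treatment of $m=1$ (the term $(m-1)i\delta v$ simply drops out, where the paper needs a separate page of computation), and in fact a slightly stronger output --- applying $d^{m-1}$ rather than $d^m$ to your key identity recovers the paper's intermediate relation \eqref{(7.13)}, which the paper establishes only for $m>1$ and explicitly remarks is stronger than \eqref{(7.1)}. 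What the paper's heavier computation offers in exchange is mainly conceptual: it illustrates the ``solve for the top derivatives'' technique that underlies Lemmas~\ref{3.7} and~\ref{3.8} and the overall ODE strategy of the proof of Theorem~\ref{1.1}, but as a proof of Lemma~\ref{7.1} itself your argument is complete and preferable.
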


 \begin{proof}
 Applying the operator $j$ to equation~(1.3) and using Lemma~3.3, we obtain
 $$(m+1)jiv=(m+1)jdu=2\delta u+(m-1)dju.$$
 Since $ju=0$, this gives
  \begin{equation}
 v=\frac2{m+1}(ji)^{-1}\delta u.
 \label{(7.4)}
 \end{equation}
 Observe that $j(\delta u)=\delta ju=0$.
 Applying Lemma~2.2 to~$\delta u$, we obtain
 $$(ji)^{-1}\delta u=\frac{m(m+1)}{2(n+2m-2)}\delta u .$$
 Substitute this expression into \eqref{(7.4)} to obtain
 \begin{equation}
 v=\frac{m}{n+2m-2}\delta u.
 \label{(7.5)}
 \end{equation}
 In particular, this implies~\eqref{(7.2)} and~\eqref{(7.3)}.

 In order to prove~\eqref{(7.1)}, we introduce the temporary
 notation $f=iv$. Equation~(1.3) can be written as $du=f$.
The latter equation can be solved in $\nabla^{m+1}u$.
  Indeed, applying Lemma~\ref{3.6} with $p=1$,  we obtain
 \pagebreak
 \begin{align}
 \hskip-3mm\nabla_{j_1\dots j_{m+1}}u_{i_1\dots i_m}
 &=\sigma(i_1\dots i_m)\sigma(j_1\dots j_{m+1})
 \sum_{l=0}^m(-1)^l{m+1\choose l+1}\nonumber\\
 &\qquad\times
 {\nabla}_{i_{m-l+1}\dots i_mj_{l+2}\dots j_{m+1}}
 f_{i_1\dots i_{m-l}j_1\dots j_{l+1}}
 \pmod{\nabla^{m-1}u}.
 \label{(7.6)}
 \end{align}

 Our further arguments are different in the cases of
 $m=1$ and of $m>1$.
 We first assume~$m>1$.
 We contract equation~\eqref{(7.6)} with
  $g^{i_{m-1}i_m}$ (i.e., multiply this equation by
  $g^{i_{m-1}i_m}$ and take the sum over~$i_{m-1}$ and~$i_m$).
  In virtue of the equality
 $$
 g^{i_{m-1}i_m}\nabla_{j_1\dots j_{m+1}}u_{i_1\dots i_m}= \nabla_{j_1\dots j_{m+1}}(ju)_{i_1\dots i_{m-2}}=0,
 $$
 we obtain
 \begin{align}
 &\sigma(j_1\dots j_{m+1})
 \sum_{l=1}^{m+1}(-1)^l{m+1\choose l}
 g^{i_{m-1}i_m}\nonumber\\
 &\ \ \times\sigma(i_1\dots i_m)
 {\nabla}_{i_{m-l+2}\dots i_mj_{l+1}\dots j_{m+1}}
 f_{i_1\dots i_{m-l+1}j_1\dots j_{l}}=0\pmod{\nabla^{m-1}u}.
 \label{(7.7)}
 \end{align}

  Using Lemma~\ref{3.5} and the equality
  $ f=iv=0\pmod{\nabla u}$ which follows from~\eqref{(7.3)},
  we can permute the indices in each factor of the product
 $$
 \nabla_{i_{m-l+2}\dots i_mj_{l+1}\dots j_{m+1}}
 f_{i_1\dots i_{m-l+1}j_1\dots j_l}
 $$
  without violating equation~\eqref{(7.7)}. Using this observation, we divide all summands of the sum in~\eqref{(7.7)} to three groups so that
  the indices~$i_{m-1}$
  and~$i_m$ belong to the first (second) factor in all summands
  of the first (third) group, and belong to different factors in
  the summands of the second group. Rename these indices as
   $ i_{m-1}=p_1$ and $i_m=p_2$ for clarity. In such the way we obtain
 \begin{align*}
 &\sigma(i_1\dots i_{m-2})\sigma(j_1\dots j_{m+1})
  \sum_{l=1}^{m+1}(-1)^l{m+1\choose l}g^{p_1p_2}\\
  \noalign{\vskip-7pt}
 &\times
 \Big((l-1)(l-2)\nabla_{i_{m-l+2}
       \dots i_{m-2}j_{l+1}
       \dots j_{m+1}p_1p_2}f_{i_1
       \dots i_{m-l+1}j_1
       \dots j_l}\\
      &+2(l-1)(m-l+1)\nabla_{i_{m-l+1}
       \dots i_{m-2}j_{l+1}
       \dots j_{m+1}p_2}f_{i_1
       \dots i_{m-l}j_1
       \dots j_lp_1}\\
      &+(m{-}l)(m{-}l{+}1)\nabla_{i_{m{-}l}
       \dots i_{m{-}2}j_{l{+}1}
       \dots j_{m{+}1}}f_{i_1
       \dots i_{m{-}l{-}1}j_1
       \dots j_lp_1p_2}
 \Big){=}\,0\!\!\pmod{\nabla^{m-1}u}.
 \end{align*}
   Now, we symmetrize this equation in all free indices
  (i.e., apply the operator\\
   $\sigma(i_1\dots i_{m-2}j_1\dots j_{m+1})$). Changing simultaneously notations as $j_1=i_{m-1},\dots,j_{m+1}=i_{2m-1}$, we get
 \begin{align*}
 &\sigma(i_1\dots i_{2m-1})
  \sum_{l=1}^{m+1}(-1)^l{m+1\choose l}g^{p_1p_2}\\
 \noalign{\vskip-3pt}
 &\qquad\times
 \Big[(l-1)(l-2)\nabla_{i_{m+2}
      \dots i_{2m-1}p_1p_2}f_{i_1
      \dots i_{m+1}}\\
      &\qquad\qquad+2(l-1)(m-l+1)\nabla_{i_{m+1}\dots i_{2m-1}p_2}
                   f_{i_1\dots i_{m}p_1}\\
      &\qquad\qquad+(m-l)(m-l+1){\nabla}_{i_m\dots i_{2m-1}}
                   f_{i_1\dots i_{m-1}p_1p_2}
 \Big]=0\pmod{\nabla^{m-1}u}.
  \end{align*}
Observe that, for different values of~$ l $, the values of the first (second, third) summand in the
brackets differ only by
some factors. So the equation is transformed to the following form:
 \begin{align}
 \sigma(i_1\dots i_{2m-1})
 &\Big(a{\nabla}_{i_{m+2}\dots i_{2m-1}p_1p_2}
         f_{i_1\dots i_{m+1}}+b{\nabla}_{i_{m+1}\dots i_{2m-1}p_2}
         f_{i_1\dots i_{m}p_1}\nonumber\\
 &\qquad+c{\nabla}_{i_m\dots i_{2m-1}}
         f_{i_1\dots i_{m-1}p_1p_2}
 \Big)g^{p_1p_2}=0
 \pmod{\nabla^{m-1}u},
 \label{(7.8)}
 \end{align}
 where
 \vskip-10mm
 \begin{align*}
 a&=\sum_{l=1}^{m+1}(-1)^l{m+1\choose l}(l-1)(l-2)=-2,\\
 b&=2\sum_{l=1}^{m+1}(-1)^l{m+1\choose l}(l-1)(m-l+1)=2(m+1),\\
 c&=\sum_{l=1}^{m+1}(-1)^l{m+1\choose l}(m-l)(m-l+1)=-m(m+1).
 \end{align*}
Substituting these values for the coefficients, we write~\eqref{(7.8)} in the
 coordinate-free form
 \begin{equation}
 2d^{m-2}\Delta f-2(m+1)d^{m-1}\delta f+m(m+1)d^mjf=0
 \pmod{\nabla^{m-1}u}.
 \label{(7.9)}
 \end{equation}

 We recall that $ f=iv $ and express all summands of~\eqref{(7.9)}
 in terms of~$ v$. Lemma~2.2 and~\eqref{(7.2)} imply
 \begin{equation}
 jf=jiv=\frac{2(n+2m-2)}{m(m+1)}v.
 \label{(7.10)}
 \end{equation}
With the help of Lemma~\ref{3.3}, we deduce
  \begin{equation}
 \delta f=\delta iv =\frac2{m+1}\,dv+
 \frac{m-1}{m+1}i\delta v.
 \label{(7.11)}
 \end{equation}
 Since the operators $i$ and~$\Delta$ commute, we have
  \begin{equation}
 \Delta f=\Delta iv=i\Delta v.
 \label{(7.12)}
 \end{equation}

Substituting \eqref{(7.10)}--\eqref{(7.12)}
 into~\eqref{(7.9)} and using the commutation formula $ di=id$, we arrive to the relation
  \begin{equation}
 (n+2m-4)d^mv=i
 \big((m-1)d^{m-1}\delta v-d^{m-2}\Delta v
 \big)
 \pmod{\nabla^{m-1}u}.
 \label{(7.13)}
 \end{equation}
 Applying the operator $ d $ to this equation, we obtain~\eqref{(7.1)}.

 Let us now consider the case of $m=1$. Equation~\eqref{(7.6)} takes the form
 $$
 \nabla_{j_2j_3}u_i =
 \nabla_{j_2}f_{ij_3}+
 \nabla_{j_3}f_{ij_2}-
 \nabla_{i}f_{j_2j_3}\pmod{u}.
 $$
 After differentiation we obtain
  \begin{equation}
 \nabla_{j_1j_2j_3}u_i =
 \nabla_{j_1j_2}f_{ij_3}+
 \nabla_{j_1j_3}f_{ij_2}-
 \nabla_{j_1i}f_{j_2j_3}
 \pmod{\nabla u}.
 \label{(7.14)}
 \end{equation}
  According to Lemma~\ref{7.1}, the third order derivatives
 satisfy the relation
 $$
 \nabla_{j_1j_2j_3}u_i-{\nabla}_{j_2j_1j_3}u_i=0
 \pmod{\nabla u}.
 $$
 Inserting~\eqref{(7.14)} into the last equation, we obtain
 $$
 \nabla_{j_1j_3}f_{ij_2}+\nabla_{ij_2}f_{j_1j_3}-
 \nabla_{j_2j_3}f_{ij_1}-\nabla_{ij_2}f_{j_2j_3}=0
 \pmod{\nabla u}.
 $$
 Now, substituting $ f=iv$, we deduce
 $$
 g_{ij_2}\nabla_{j_1j_3}v+g_{j_1j_3}\nabla_{ij_2}v-
 g_{ij_1}\nabla_{j_2j_3}v-g_{j_2j_3}\nabla_{ij_1}v=0
 \pmod{\nabla u}.
 $$
 Contracting this equation with $g^{j_1j_3} $, we arrive to the formula
 $$
 (n-2)\nabla_{ij}v=-(\Delta v)g_{ij}
 \pmod{\nabla u}
 $$
that coincides with~\eqref{(7.1)} for $m=1$.
 \end{proof}

 Observe that, in the case of $m>1 $, we have established
 relation~\eqref{(7.13)} which is stronger than that of Lemma~\eqref{(7.1)}.

 The next statement plays the main role in the proof of Theorem~1.1.

 \begin{Lemma}\label{7.2}
 Assume $ u\in C^\infty(S^m\tau')$ and $ v\in C^\infty(S^{m-1}\tau')$
 to satisfy {\rm(1.2)} and {\rm(1.3)} for $m\ge 1$.
 Then$,$ for every integer~$ l $ such that $ 0\le l\le 2m $$,$
 the equation
 \begin{equation}
 \sum_{p=p_1}^{p_2}a_pd^{m-l+p+1}\Delta^{l-p}\delta^pv=i
 \sum_{p=p_3}^{p_4}b_pd^{m-l+p-1}\Delta^{l-p+1}\delta^pv
 \pmod{\nabla^{m+l}u}
 \label{(7.15)}
 \end{equation}
  is valid with some rational coefficients $ a_p=a_p(n,m,l) $
 and $ b_p=b_p(m,l) $. Here the summation limits are defined as
 follows$:$
 \begin{equation}
 \aligned
 p_1&=p_1(m,l)=\max(0,l-m-1),\\
 p_2&=p_2(m,l)=\min(m-1,l),\\
 p_3&=p_3(m,l)=\max(0,l-m+1),\\
 p_4&=p_4(m,l)=\min(m-1,l+1).
 \endaligned
 \label{(7.16)}
 \end{equation}
  The coefficients $a_{p_1}$ and $a_{p_2}$ are not equal to zero.
 \end{Lemma}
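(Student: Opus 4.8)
The plan is to prove Lemma~\ref{7.2} by induction on~$l$, starting from the base case $l=0$ and using Lemma~\ref{7.1} as the engine. Let me think about what~\eqref{(7.15)} says and where the statement must come from.

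First I would set up the base case. For $l=0$, the summation limits~\eqref{(7.16)} give $p_1=p_2=0$ (since $m\ge 1$) and $p_3=p_4=0$ as well (assuming $m\ge 1$; if $m=1$ the limits need a careful check). Thus~\eqref{(7.15)} reduces to a single-term relation of the form $a_0\,d^{m+1}v = i\,b_0\,d^{m-1}\Delta v \pmod{\nabla^m u}$, and comparing with~\eqref{(7.1)} — namely $(n+2m-4)d^{m+1}v=i\big((m-1)d^m\delta v - d^{m-1}\Delta v\big)$ — I see that~\eqref{(7.1)} already contains an extra $d^m\delta v$ term. So the base case is not literally~\eqref{(7.1)}; rather I expect~\eqref{(7.15)} at $l=0$ to be an identity obtained by applying further operators (most likely~$\delta$ combined with the commutation formulas of Lemma~\ref{6.3} and Lemma~\ref{6.4}) to~\eqref{(7.1)}, so as to absorb or re-express the $\delta v$ term. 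I would verify that the nonvanishing of $a_{p_1}=a_0$ matches the coefficient $(n+2m-4)$, which is nonzero for $n\ge 3$, $m\ge 1$.

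Next I would carry out the inductive step. Assuming~\eqref{(7.15)} holds for some $l$ with $0\le l<2m$, I would apply a first-order operator — the natural candidate is $\delta$, since the divergence raises~$l$ by one while lowering tensor rank — to both sides. The commutation formulas $\Delta^l d^k = d^k\Delta^l \pmod{\nabla^{k+2l-2}}$ of~\eqref{(6.4)}, together with~\eqref{(6.8)} from Lemma~\ref{6.3} expressing $\delta^l d^k$ in terms of $d^{k-p}\Delta^p\delta^{l-p}$, and~\eqref{(3.5)}--\eqref{(3.6)} of Lemma~\ref{3.3} for moving $\delta$ and $d$ past $i$, are exactly the tools needed to reorganize $\delta$ applied to each monomial $d^{m-l+p+1}\Delta^{l-p}\delta^p v$ back into the canonical shape $d^{m-(l+1)+p'+1}\Delta^{(l+1)-p'}\delta^{p'} v$. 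All manipulations are modulo $\nabla^{m+l+1}u$, which is consistent since applying one derivative to a relation valid mod $\nabla^{m+l}u$ yields one valid mod $\nabla^{m+l+1}u$. I would track how the summation range $[p_1,p_2]$ shifts to $[p_1(m,l+1),p_2(m,l+1)]$ under $\min/\max$ bookkeeping, checking the boundary behavior where a term at one end of the range either drops out (through a vanishing binomial coefficient under convention~\eqref{(6.9)}) or merges with a newly-produced term.

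The main obstacle will be bookkeeping the coefficients and, in particular, proving the endpoint nonvanishing claim that $a_{p_1}\neq 0$ and $a_{p_2}\neq 0$. The existence of \emph{some} rational coefficients is essentially automatic once one confirms that every monomial produced by $\delta$ falls into the allowed family, but the claim that the two extreme coefficients survive requires showing that no cancellation kills the leading and trailing terms as $l$ advances. I expect the cleanest route is to derive explicit recurrences for $a_{p_1(m,l)}$ and $a_{p_2(m,l)}$ in terms of the previous step's endpoint coefficients: the top endpoint $p_2=\min(m-1,l)$ is governed by the $d\delta$-part of~\eqref{(6.10)} with coefficient $\tfrac{m-l+p}{m-l+k}$-type factors that stay nonzero in the range $n\ge 3$, $0\le l\le 2m$, while the bottom endpoint $p_1=\max(0,l-m-1)$ inherits a factor like $(n+2m-4)$ or a product of strictly positive integers from the $\Delta$-part. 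I would isolate these two recurrences, show each multiplier is nonzero under the stated hypotheses, and conclude by induction that neither endpoint coefficient ever vanishes; the interior coefficients are irrelevant to the lemma's assertion and need not be computed explicitly.
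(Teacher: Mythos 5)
There is a genuine gap, and also a concrete error at the start. First, the error: you miscomputed the summation limits at $l=0$. From \eqref{(7.16)}, $p_3=\max(0,1-m)=0$ but $p_4=\min(m-1,1)$, which equals $1$ for $m\ge2$, not $0$. So the right-hand side of \eqref{(7.15)} at $l=0$ contains two terms, $b_0d^{m-1}\Delta v$ and $b_1d^m\delta v$, and the base case is \emph{literally} \eqref{(7.1)}, with $a_0=n+2m-4$, $b_0=-1$, $b_1=m-1$ (for $m=1$ the $\delta v$ term is absent on both sides, consistently with $p_4=0$). Your plan to ``absorb or re-express the $\delta v$ term'' by applying further operators is unnecessary and would derail the induction before it starts. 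Note also that the paper does not induct on $l$ at all: it applies $\delta^l$ to \eqref{(7.1)} in one shot, converts everything to the canonical monomials $d^r\Delta^s\delta^t v$ via the closed-form commutation formulas of Lemma~\ref{6.3} (for $\delta^ld^k$) and Lemma~\ref{6.4} (for $\delta^l i$), using \eqref{(7.3)} to push low-order $v$-terms into the modulus $\nabla^{m+l}u$, and obtains the explicit coefficients \eqref{(7.17)}--\eqref{(7.18)}, from which the support $[p_1,p_2]$ and the nonvanishing of $a_{p_1},a_{p_2}$ are read off by inspection.

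The more serious gap concerns the endpoint nonvanishing, which is the only nontrivial assertion of the lemma and which your inductive step does not actually deliver. When you apply $\delta$ to \eqref{(7.15)} at level $l$, the right-hand side forces the commutation \eqref{(3.5)}; since each tensor inside the factor $i$ has rank $2m-l-2$, this reads $\delta iR=\frac{2}{2m-l}\,dR+\frac{2m-l-2}{2m-l}\,i\delta R$, and the $d$-part produces terms \emph{without} the factor $i$, which must be carried to the left-hand side with a sign change. Hence the new left coefficients obey a mixed recurrence of the shape $a'_p(l+1)=c_1a_{p-1}(l)+c_2a_p(l)-\frac{2}{2m-l}\,b_p(l)$, so the $a$-stream and the $b$-stream interact. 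At the upper endpoint this is unavoidable: for $l+1\le m-1$ one has $p_2(l+1)=l+1=p_2(l)+1=p_4(l)$, so $a'_{p_2(l+1)}$ receives a contribution from $a_{p_2(l)}$ (via the $d^k\delta$ part of \eqref{(6.10)}) \emph{and} from $b_{p_4(l)}$; the same mixing occurs at the lower endpoint (the term $b_{p_3(l)}$ feeds $a'_{p_1(l+1)}$). Since by \eqref{(7.18)} the $b_p$ are of variable sign, your proposed ``one nonzero multiplier per endpoint'' argument cannot rule out cancellation; closing the induction would require simultaneously controlling the endpoint $b$-coefficients, i.e., a genuinely joint recurrence for both families. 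This is exactly the bookkeeping the paper's one-shot computation avoids: in \eqref{(7.17)} the endpoint values $a_{p_1},a_{p_2}$ are explicit combinations of binomials together with the factor $n+2m-4>0$, visibly nonzero. Without either those closed forms or the worked-out joint recurrence, your final claim remains a hope rather than a proof.
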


 \begin{proof}
 Apply the operator $\delta^l$ to equation~\eqref{(7.1)}
 $$
 (n+2m-4)\delta^ld^{m+1}v= \delta^li\big((m-1)d^m\delta v-d^{m-1}\Delta v
          \big)
 \pmod{\nabla^{m+l}u}.
 $$
 We transform the right-hand side of this equality with the help of Lemma~\ref{6.4} and obtain
  \begin{align*}
 &2m(2m-1)(n+2m-4)\delta^ld^{m+1}v-2l(2m-l)(m-1)d\delta^{l-1}d^m\delta v\\\noalign{\vskip4pt}
 &\quad-l(l\_1)(m\_1)\Delta\delta^{l\_2}d^m\delta v\+
          2l(2m\_l)d\delta^{l\_1}d^{m\_1}\Delta v\+
 l(l\_1)\Delta \delta^{l\_2}d^{m\_1}\delta v\\\noalign{\vskip4pt}
 &\qquad=(2m-l)(2m-l-1)i
 \big[(m-1)\delta^ld^m\delta v-\delta^ld^{m-1}\Delta v
 \big]\pmod{\nabla^{m+l}u}.
 \end{align*}
 Taking~\eqref{(7.3)} into account, we transform each summand on the left-hand side
 and the summands in the brackets to the form
  $ d^r\Delta^s\delta^tv $ by using the commutation formulas for powers
 of~$d$, $\delta$, and~$\Delta$ (see Lemmas~\ref{6.1}
 and~\ref{6.3}). Elementary but cumbersome calculations lead
  us to equation~\eqref{(7.15)} where the summation is taken
  over all integers~$ p $ under the agreement $ d^k=\delta^k=\Delta^k=0 $ for $ k<0 $,
  and the coefficients are as follows:
 \begin{align}
 a_p&=\left[(n+2m-4){m+1\choose l-p}+2
                    {m-1\choose l-p-1}+
                    {m-1\choose l-p-2}
      \right]{m-1\choose p}\nonumber\\\noalign{\vskip5pt}
 &\qquad-(m-1)
 \left[2{m\choose l-p}+
        {m\choose l-p-1}
 \right]{m-2\choose p-1},
 \label{(7.17)}\\\noalign{\vskip14pt}
 b_p&=(m-1){m\choose l-p+1}
           {m-2\choose p-1}-
           {m-1\choose l-p}
           {m-1\choose p}.
 \label{(7.18)}
 \end{align}
Agreement~\eqref{(6.9)} is used in \eqref{(7.17)} and \eqref{(7.18)}.

  Elementary arithmetical analysis of formula~\eqref{(7.17)} shows that the
  coefficients~$ a_p $ can be nonzero for $ p_1\le p\le p_2 $ only,
  where~$p_1$ and~$ p_2$ are defined in~\eqref{(7.16)},
and~$a_{p_1}$ and~$a_{p_2}$ are definitely nonzero. Similarly,
 ~\eqref{(7.18)} implies that $ b_p $ can be nonzero for $ p_3\le p\le p_4 $ only.
 \end{proof}
 \smallskip

\begin{proof}[\it Proof of Theorem~$1.1$]
  Recall that we assume $m\ge\nobreak 1$.
  First we prove by induction in~$ k $ the equality
  \begin{equation}
  \Delta^{m+k}\delta^{m-k}v=0
  \pmod{\nabla^{3m+k-1}u}
  \ \ \mbox{for}\ \ 0\le k\le m.
  \label{(7.19)}
  \end{equation}
    The equality is trivial for $k=0$ since $\delta^m v=0$.
  Assume~\eqref{(7.19)} to be valid for $k=0,\dots,s-1<m$.
 We write~\eqref{(7.15)} for $ l=2m-s+1 $ as follows:
 $$
 \sum_{p=m-s}^{m-1}a_pd^{s+p-m}\Delta^{2m-s+p+1}\delta^pv
 =i\sum_{p=m-s+2}^{m-1}b_pd^{s+p-m-2}\Delta^{2m-s-p+2}
 \delta^pv\pmod{\nabla^{3m-s+1}u}.
 $$
 We apply the operator $\Delta^{s-1}$
 to this equality and  transform all terms of the resulting formula to the form $ d^r\Delta^s\delta^tv $ with the help of \eqref{(6.4)}. By Lemma~\ref{7.2},~$a_{m-s} \neq 0 $.
 We distinguish the first summand on the left-hand side and write the result as follows:
 \begin{equation}
 \begin{aligned}
  \Delta^{m+s}\delta^{m-s}v&+\sum_{p=m-s+1}^{m-1}
 a'_pd^{s+p-m}\Delta^{2m-p}\delta^pv\\
 &=i\sum_{p=m-s+2}^{m-1}b'_pd^{s+p-m-2}\Delta^{2m-p+1}\delta^pv
 \pmod{\nabla^{3m+s-1}u}.
 \end{aligned}
 \label{(7.20)}
 \end{equation}

 By the inductive hypothesis,
 $$
 \Delta^{m+k}\delta^{m-k}v =0
 \pmod{\nabla^{3m+k-1}u}
 \ \ \mbox{for}\ \ 0\le k\le s-1.
 $$
 Setting $ k=m-p $ here, we have
 $$
 \Delta^{2m-p}\delta^{p}v=0
 \pmod{\nabla^{4m-p-1}u}
 \ \ \mbox{for}\ \ m-s+1\le p\le m-1.
 $$
Applying the operators
  $d^{s+p-m}$ and $d^{s+p-m-2}\Delta $ to this equation, we obtain
  \begin{alignat*}2
 d^{s+p-m}\Delta^{2m-p}\delta^{p}v=0
      &\pmod{\nabla^{3m+s-1}u}&&\ \ \mbox{for}\ \ m-s+1\le p\le m-1,\\
 d^{s+p-m-2}\Delta^{2m-p+1}\delta^{p}v=0
      &\pmod{\nabla^{3m+s-1}u}&&\ \ \mbox{for}\ \ m-s+2\le p\leq m-1.
 \end{alignat*}
 Both sums on~\eqref{(7.20)}
 are equal to zero $\pmod{\nabla^{3m+s-1}u}$, as follows from the last two equations. Hence,
 $$
 \Delta^{m+s}\delta^{m-s}v=0
 \pmod{\nabla^{3m+s-1}u}.
 $$
 This coincides with~\eqref{(7.19)} in the case of $k=s$.
 This completes the inductive step. Thus,~\eqref{(7.19)}
 is proved.

 Now we prove the equality
 \begin{equation}
 d^{m+2r+k-1}\Delta^{2m-r-k}\delta^{k}v=0
 \pmod{\nabla^{5m-2}u}
 \ \ \mbox{for}\  0\le r\le 2m,\  0\le k\le 2m-r
 \label{(7.21)}
 \end{equation}
  by double induction in $ r $ and~$k$.
 \goodbreak
 Setting  $k:=m-k$ in~\eqref{(7.19)}, we have
 $$
 \Delta^{2m-k}\delta^{k}v=0
 \pmod{\nabla^{4m-k-1}u}
 \ \ \mbox{for}\ \ 0\le k\le m.
 $$
 Applying the operator $d^{m+k-1} $ to
  this equality, we obtain~\eqref{(7.21)} for $r=0$.

 Assume now~\eqref{(7.21)} to be valid valid for $ 0\le r\le s-1<2m$,
 i.e.,
 \begin{equation}
 d^{m+2r+k-1}\Delta^{2m-r-k}\delta^{k}v=0
 \pmod{\nabla^{5m-2}u}
 \ \ \mbox{for}\  0\le r\le s-1,\  0\le k\le 2m-r.
 \label{(7.22)}
 \end{equation}
 We are going to prove~\eqref{(7.21)} for $r=s$. To this end we write down~\eqref{(7.15)} for $ l=0 $
 \big(this is exactly~\eqref{(7.1)}\big)
 as follows:
 $$
 (n+2m-4)d^{m+1}v=i
 \big((m-1)d^m\delta v-d^{m-1}\Delta v
 \big)\pmod{\nabla^{m}u}.
 $$
Applying the operator $d^{2s-2}\Delta^{2m-s}$ to
 this equality, we obtain
  \begin{align*}
 &(n+2m-4)d^{m+2s-1}\Delta^{2m-s}v\\
 &\qquad=i\big((m-1)d^{m+2s-2}\Delta^{2m-s}\delta v-d^{m-2s-3}\Delta^{2m-s+1}v
         \big)\pmod{\nabla^{5m-2}u}.
 \end{align*}
 Setting $ k=0 $ and $r=s-1$ in~\eqref{(7.22)}, and then setting $k=1$ and $r=s-1$ in~\eqref{(7.22)},
 we see that the right-hand side of the last formula
 equals zero $\pmod{\nabla^{5m-2}u}$. We have thus
 proved~\eqref{(7.21)} for $ r=s $ and $ k=0$.

 Assume now~\eqref{(7.21)} to be valid for $r=s$ and $0\le k\le t-1<2m-s$, i.e.,
  \begin{equation}
 d^{m+2s+k-1}\Delta^{2m-s-k}\delta^{k}v=0
 \pmod{\nabla^{5m-2}u}
 \ \ \mbox{for}\ \ 0\le k\le t-1.
 \label{(7.23)}
 \end{equation}
 We are going to prove~\eqref{(7.21)} for $ k=t $.

 If $t\ge m$, then~\eqref{(7.21)} is obviously true for $ k=t $, since  $\delta^tv=0$ in this  case. Therefore we assume
  \begin{equation}
 t\le\min(2m-s,m-1)
 \label{(7.24)}
 \end{equation}
  Recall also that
  \begin{equation}
 1\le s\le 2m.
 \label{(7.25)}
 \end{equation}

 Let us write down~\eqref{(7.15)} for $ l=t$.
 By~\eqref{(7.24)} and \eqref{(7.25)},
 this equation takes the form
 \begin{equation}
 \sum_{p=0}^ta_pd^{m-t+p+1}\Delta^{t-p}\delta^pv
 =i\sum_{p=0}^{p_4}b_pd^{m-t+p-1}
 \Delta^{t-p+1}\delta^p v
 \pmod{\nabla^{m+t}u},
 \label{(7.26)}
 \end{equation}
  where
  \begin{equation}
 p_4=\min(m-1,t+1).
 \label{(7.27)}
 \end{equation}
   According to Lemma~\ref{7.2}, the coefficient~$ a_t$ in~\eqref{(7.26)} is not zero. We distinguish the last summand on the left-hand side of~\eqref{(7.26)} and apply the operator
 $d^{2s+t-2}\Delta^{2m-s-t} $ to this equation
  \begin{align}
 &d^{m+2s+t-1}\Delta^{2m-s-t}\delta^tv+
 \sum_{p=0}^{t-1}a'_pd^{m+2s+p-1}\Delta^{2m-s-p}\delta^pv\nonumber\\
 \noalign{\vskip-2mm}
 &\qquad=i\sum_{p=0}^{p_4}b'_pd^{m+2s+p-3}\Delta^{2m-s-p+1}\delta^pv
 \pmod{\nabla^{5m-2}u}.
 \label{(7.28)}
 \end{align}
 The sum on the left-hand side of~\eqref{(7.28)} equals zero
 $\pmod{\nabla^{5m-2}u}$ by the inductive hypothesis~\eqref{(7.23)}.  We shall
 prove the same for the right-hand side.

Setting  $ r=s-1 $ and $ k=p $ in~\eqref{(7.22)}, we obtain
 \begin{equation}
 d^{m+2s+p-3}\Delta^{2m-s-p+1}\delta^pv=0
 \pmod{\nabla^{5m-2}u}
 \ \ \mbox{for}\ \ 0\le p\le 2m-s+1.
 \label{(7.29)}
 \end{equation}
 Inequalities~\eqref{(7.24)} and~\eqref{(7.27)} imply
 $ p_4\le 2m-s+1 $.  Therefore all the summands
 on the right-hand side of~\eqref{(7.28)} are equal to zero
 $ \pmod{\nabla^{5m-2}u} $ according to~\eqref{(7.29)}. Hence,~\eqref{(7.28)} implies
 $$
 d^{m+2s+t-1}\Delta^{2m-s-t}\delta^tv=0
 \pmod{\nabla^{5m-2}u}.
 $$
We have thus proved~\eqref{(7.23)} for $ k=t $.
 This completes the inductive step in $ k $ and~$ r $.
 Thus,~\eqref{(7.21)} is proved.

 Setting $ r=2m $ and $ k=0 $ in~\eqref{(7.21)}, we obtain
 \begin{equation}
 d^{5m-1}v=0
 \pmod{\nabla^{5m-2}u}.
 \label{(7.30)}
 \end{equation}
 According to (1.3), we have $ du=0\pmod{v} $.
 Applying the operator~$d^{5m-2}$ to this equality, we deduce
  \begin{equation}
 d^{5m-1}u=0
 \pmod{\nabla^{5m-2}v}.
 \label{(7.31)}
 \end{equation}

 We write the initial conditions~(1.4) in the form
  \begin{equation}
 u(x_0)=0,\quad \nabla u(x_0)=0,\quad \dots,\quad \nabla^{6m-2}u(x_0)=0.
 \label{(7.32)}
 \end{equation}
  From~\eqref{(7.3)} and~\eqref{(7.32)},
  \begin{equation}
 v(x_0)=0,\quad \nabla v(x_0)=0,\quad \dots,\quad \nabla^{6m-3}v(x_0)=0.
 \label{(7.33)}
 \end{equation}

 We have thus proved that~$u$ and~$v $ satisfy equations
 \eqref{(7.30)} and \eqref{(7.31)} and the initial
 conditions~\eqref{(7.32)} and~\eqref{(7.33)}.
 Applying Lemma~\ref{3.8}, we obtain $ u\equiv v\equiv 0 $
 This finishes the proof of Theorem~1.1 in the case of $ n\ge 3 $.
 \end{proof}

 Recall that the highest order of derivatives in the initial
 conditions~(1.4) is denoted by $ l(m) $.
 We have shown in the proof that
 $$
 l(m)\le 6m-2\ \ \mbox{if}\ \ m>0.
 $$
 As was mentioned after the statement of Theorem~1.1, this
 estimate is not sharp. The exact value $ l(m)=2m $ was found
 in~\cite{[Wr]} for $ m=2 $, and in~\cite{[Ca],[Ea]} for an
 arbitrary~$ m $. In the same papers, the upper bound
 $$
 \frac{(n+m-3)!(n+m-2)!(n+2m-2)(n+2m-1)(n+2m)}
     {m!(m+1)!(n-2)!n!}
 $$
 was established for the dimension of the space of trace-free
 conformal Killing symmetric tensor
 fields of the rank~$ m $ on a manifold of dimension $ n\ge 3 $.
 Both the estimates are sharp and become equalities in the case of a conformal flat manifold.

  \section{Spherical harmonics Fourier series expansion \\
  of solution to the kinetic equation}

 Let $ M $~be a Riemannian manifold. Recall the operator
$$
\lambda:C^\infty(S^*\tau')\rightarrow C^\infty(\Omega)
$$
 was defined in Section 2. Let~$ H $ be the vector field on $ T=TM $
 which generates the geodesic flow. The field is expressed by~(1.9)
 in local coordinates. The field is tangent to the submanifold
 $\Omega\subset T $ at points of the latter submanifold. Hence the field can be considered
 as a differential operator
 $H:C^\infty(\Omega)\rightarrow C^\infty(\Omega)$ on the
 submanifold.

 \begin{Lemma}\label{8.1}
 The following equality holds on $C^\infty(S^*\tau')$\dv
 \begin{equation}
 \lambda d=H\lambda .
 \label{(8.1)}
 \end{equation}
 \end{Lemma}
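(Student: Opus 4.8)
The plan is to reduce \eqref{(8.1)} to formula~\eqref{(1.10)} of the Introduction, which already records the compatibility of $H$ with the inner derivative at the level of homogeneous polynomials on the \emph{whole} tangent bundle, and then to descend from $T$ to the sphere bundle $\Omega$ using that $H$ is tangent to $\Omega$.

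First I would recall that $\lambda$ factors through $\varkappa$ as $\lambda u=(\varkappa u)\big\vert_{\Omega}$, where $\varkappa u\in C^\infty(T)$ is the homogeneous polynomial $(\varkappa u)(x,\xi)=u_{i_1\dots i_m}(x)\xi^{i_1}\cdots\xi^{i_m}$. Formula~\eqref{(1.10)} asserts precisely that $H(\varkappa u)=\varkappa(du)$ as an identity of functions on all of $T$, for every $u\in C^\infty(S^{m-1}\tau')$ and every $m$; hence $H\varkappa=\varkappa d$ on $C^\infty(S^*\tau')$. For completeness I would note that \eqref{(1.10)} is itself a short coordinate computation from~\eqref{(1.9)}: applying $H=\xi^i\,\partial/\partial x^i-\Gamma^i_{jk}\xi^j\xi^k\,\partial/\partial\xi^i$ to $u_{i_1\dots i_m}\xi^{i_1}\cdots\xi^{i_m}$, the vertical term produces exactly the $m$ Christoffel corrections (all equal by symmetry of $u$) that turn $\partial u_{i_1\dots i_m}/\partial x^i$ into the covariant derivative $\nabla_i u_{i_1\dots i_m}$, and the full symmetrization hidden in $d=\sigma\nabla$ is irrelevant after contraction with the symmetric product $\xi^i\xi^{i_1}\cdots\xi^{i_m}$.

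The decisive step is to restrict the identity $H(\varkappa u)=\varkappa(du)$ from $T$ to $\Omega$. Here I would invoke the fact, recorded just before the statement of the lemma, that the geodesic field $H$ is tangent to $\Omega$; consequently $H$ defines a derivation of $C^\infty(\Omega)$, and for any $F\in C^\infty(T)$ the restriction $(HF)\big\vert_{\Omega}$ depends only on $F\big\vert_{\Omega}$ and equals $H\bigl(F\big\vert_{\Omega}\bigr)$. Taking $F=\varkappa u$, whose restriction to $\Omega$ is $\lambda u$, gives
\[
H(\lambda u)=H\bigl((\varkappa u)\big\vert_{\Omega}\bigr)
=\bigl(H\varkappa u\bigr)\big\vert_{\Omega}
=\bigl(\varkappa(du)\bigr)\big\vert_{\Omega}=\lambda(du),
\]
which is exactly~\eqref{(8.1)}.

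I expect the only genuinely delicate point to be this passage from $T$ to $\Omega$: one must ensure that applying the intrinsic operator $H$ on $\Omega$ to the restricted function $\lambda u$ coincides with restricting $H(\varkappa u)$ computed on $T$, and this is guaranteed precisely by the tangency of $H$ to $\Omega$. Everything else is the cited identity~\eqref{(1.10)} together with the definition of $\lambda$ as the restriction of $\varkappa$ to the unit sphere bundle.
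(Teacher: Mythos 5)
Your proposal is correct and follows essentially the same route as the paper: the paper also reduces \eqref{(8.1)} to the identity $\varkappa d=H\varkappa$ on $C^\infty(T)$ (the content of~\eqref{(1.10)}, verified by the coordinate computation you sketch) and then descends to $\Omega$ via the tangency of $H$ to the unit sphere bundle. You spell out the restriction step and the Christoffel cancellation that the paper leaves as omitted calculations, but the argument is the same.
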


 Since $ \lambda $ and $ H $ are restrictions to~$\Omega $
 of some operators defined on~$ T $, it suffices to prove the equality
 \begin{equation}
 \varkappa d=H\varkappa,
 \label{(8.2)}
 \end{equation}
 where $ H $ is considered as an operator on~$ T $, and the operator
 $\varkappa:C^\infty(S^*\tau') \rightarrow C^\infty(T) $ has been defined in
~\S\,2. The last equality can be easily checked by calculations in coordinates, and we omit the calculations.

 Now, assume  functions $U,F\in C^\infty(\Omega)$ to be
 linked by the {\it kinetic equation}
 \begin{equation}
 HU=F.
 \label{(8.3)}
 \end{equation}
From~\eqref{(8.3)}, we will deduce some equations that relate the Fourier
 series of the functions $ U $ and~$ F$.

By Lemma~2.5, the functions $ U $ and $ F $ can be
uniquely  represented by the series
 \vskip-7pt\noindent
 \begin{alignat}3
 U&=\sum_{m=0}^{\infty}\lambda u_m,
  &&\quad u_m\in C^\infty(S^m\tau'),
    &&\quad ju_m=0,
 \label{(8.4)}\\
 F&=\sum_{m=0}^{\infty}\lambda f_m,
  &&\quad f_m\in C^\infty(S^m\tau'),
    &&\quad jf_m=0.
 \label{(8.5)}
 \end{alignat}
 As well known~\cite{[Sob]},  the Fourier series of a
 sufficiently smooth function on a sphere can be termwise differentiated
 with respect to the coordinates of a point of the sphere.
 The same is true for the differentiation with respect to
 the coordinates of a point $x\in M$ which play the role of
 parameters in the
 series. Hence,~\eqref{(8.4)} implies
 \begin{equation}
 HU=\sum_{m=0}^{\infty}H\lambda u_m
 \label{(8.6)}
 \end{equation}
   and the series
  converges absolutely and uniformly on any compact subset of~$\Omega$.

 According to Lemma~\ref{8.1}, we have
 \begin{equation}
 H\lambda u_m=\lambda du_m.
 \label{(8.7)}
 \end{equation}
  The condition $ ju_m=0 $ is equivalent to $ pu_m=u_m $.
 The last equality and Lemma~3.4 imply
 $$
 du_m=dpu_m=pdu_m+\frac{m}{n+2m-2}i\delta pu_m =
 pdu_m+\frac{m}{n+2m-2}i\delta u_m.
 $$
 Apply the operator $\lambda$ to this equality and use Lemma~2.4 to obtain
 $$
 \lambda du_m=\lambda pdu_m+\frac{m}{n+2m-2}\lambda\delta u_m.
 $$
 Comparing this formula with~\eqref{(8.7)}, we see
 $$
 H\lambda u_m=\lambda pdu_m+\frac{m}{n+2m-2}\lambda\delta u_m.
 $$
 Substitute this expression into~\eqref{(8.6)} to obtain
  \begin{equation}
 HU=\sum_{m=0}^{\infty}\lambda
 \bigg(pdu_{m-1}+\frac{m+1}{n+2m}\delta u_{m+1}
 \bigg).
 \label{(8.8)}
 \end{equation}
    For convenience, we assume here $u_{-1}=0$. The expression in parentheses in~\eqref{(8.8)}
  belongs to the kernel of~$j$ since $\delta$
  and~$j$ commute. Hence, \eqref{(8.8)} is the
  Fourier series of the function $ HU=\nobreak F $ with respect to
  the spherical harmonics, i.e., \eqref{(8.8)} must coincide
  with~\eqref{(8.5)}. We have thus proved the following

 \begin{Theorem}\label{8.2}
 Let $U\in C^\infty(\Omega)$ be a solution to the kinetic equation $ HU=F $$,$
 and let~\eqref{(8.4)} and~\eqref{(8.5)} be the spherical Fourier
 series of
 $ U $ and~$ F $$,$ respectively. Then
 \begin{gather*}
 \delta u_1=nf_0,\\
 pdu_m+\frac{m+2}{n+2m+2}\delta u_{m+2}=f_{m+1}
 \ \ \mbox{for}\ \ m=0,1,2,\dots,
 \end{gather*}
 where $n=\dim M$.
 \end{Theorem}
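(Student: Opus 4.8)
The plan is to read off the two claimed identities directly from the expansion~\eqref{(8.8)}, regarding it as the spherical-harmonics Fourier series of $F=HU$ and invoking uniqueness. The analytically substantial work---rewriting $H\lambda u_m$ by means of Lemmas~\ref{8.1},~\ref{3.4}, and~\ref{2.4}, and then regrouping the resulting terms by degree---has already produced~\eqref{(8.8)}, so all that remains is a comparison of Fourier coefficients.

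First I would check that, for each $m\ge 0$, the rank-$m$ tensor field
$$
g_m:=pdu_{m-1}+\frac{m+1}{n+2m}\,\delta u_{m+1}
$$
occurring in the $m$-th summand of~\eqref{(8.8)} lies in $\Ker j$. The term $pdu_{m-1}$ does so by the very definition of the projection~$p$ onto $\Ker j$, since $jp=0$. The term $\delta u_{m+1}$ does so because $ju_{m+1}=0$, together with the commutation relation $j\delta=\delta j$ from~\eqref{(3.1)}, yields $j\delta u_{m+1}=\delta ju_{m+1}=0$. Hence~\eqref{(8.8)} exhibits $F$ as a series $F=\sum_{m}\lambda g_m$ with $g_m\in C^\infty(S^m\tau')$ and $jg_m=0$, i.e.\ precisely in the normal form of Lemma~\ref{2.5}.

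Next I would invoke uniqueness. By Lemma~\ref{2.5} the representation of $F$ as $\sum_m\lambda f_m$ with $jf_m=0$ in~\eqref{(8.5)} is unique; comparing it with $\sum_m\lambda g_m$ and using that $\lambda$ is fibrewise injective on $\Ker j$ (Lemma~\ref{2.4} identifies $\lambda_x$ with an isomorphism of $\Ker j_m$ onto the degree-$m$ spherical harmonics on $\Omega_x$) forces $f_m=g_m$ for every $m$, that is,
$$
f_m=pdu_{m-1}+\frac{m+1}{n+2m}\,\delta u_{m+1}\qquad(m\ge 0).
$$

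Finally I would specialize the index. For $m=0$ the convention $u_{-1}=0$ makes $pdu_{-1}=0$, so $f_0=\frac1n\delta u_1$, i.e.\ $\delta u_1=nf_0$. For the remaining cases I would substitute $m\mapsto m+1$ with $m=0,1,2,\dots$, obtaining $f_{m+1}=pdu_m+\frac{m+2}{n+2m+2}\,\delta u_{m+2}$, which is exactly the asserted recurrence. I do not expect a genuine obstacle: the only points requiring care are the legitimacy of treating~\eqref{(8.8)} as the true Fourier expansion of $F$---which rests on the termwise differentiability and uniform convergence of spherical-harmonics series, already used above and justified via~\cite{[Sob]}---and the fibrewise injectivity of $\lambda$ on $\Ker j$, both of which are supplied by the earlier lemmas.
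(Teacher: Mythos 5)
Your proposal is correct and takes essentially the same route as the paper's own proof, which likewise obtains \eqref{(8.8)} via termwise differentiation and Lemmas \ref{8.1}, \ref{3.4}, and \ref{2.4}, notes that each summand lies in $\Ker j$ because $j\delta=\delta j$, and identifies the result with the unique expansion \eqref{(8.5)} furnished by Lemma \ref{2.5}. Your extra details (the check $jp=0$ for the term $pdu_{m-1}$, and the fibrewise injectivity of $\lambda$ on $\Ker j$ from Lemma \ref{2.4} underlying the uniqueness) merely make explicit what the paper leaves implicit.
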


 \section{Proof of THeorem~1.1 in the two-dimensional case}

 We assume here $n=\dim\,M=2$. As well known, an  isothermic coordinate system~$(x,y)$ exists in some
 neighborhood of every point of a two-dimensional Riemannian manifold. In such a coordinate system,
   the Riemannian metric has the form
 \begin{equation}
 ds^2=e^{2\mu(x,y)}(dx^2+dy^2).
 \label{(9.1)}
 \end{equation}
 It suffices to prove Theorem~1.1 under the assumption that
 such a coordinate system is defined on the whole of~$M$.

 Define the coordinate system $(x,y,\theta)$ on the
 three-dimensional manifold~$\Omega$ such that $\theta$~ is the angle
 between the unit vector $ \xi\in\Omega$ and the coordinate line
 $ y=\const $. The spherical
 harmonics series
 expansion of
 a function
 $U(x,y,\theta)\in C^\infty(\Omega) $ coincides with the
 Fourier series with respect to~$\theta$. Hence the next
 statement is a
 particular case of Lemma~2.5
 (cf. the remark after statement
 of the lemma).

 \begin{Lemma}\label{9.1}
 Let $u\in C^\infty(S^m\tau')$ and let
 $$
 (\lambda u)(x,y,\theta)= \frac12a_0(x,y)+\sum_{k=1}^m
 \big(a_k(x,y)\cos k\theta+b_k(x,y)\sin k\theta
 \big)
 $$
 be the Fourier series
 of the function  $\lambda u\in C^\infty(\Omega)$. Then
 $$
 (\lambda pu)(x,y,\theta)=a_m(x,y)\cos m\theta+b_m(x,y)\sin m\theta.
 $$
 \end{Lemma}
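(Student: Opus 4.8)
The plan is to reduce Lemma~\ref{9.1} to the computation of the $\theta$-Fourier series of $\lambda u$ for a single tensor field $u\in C^\infty(S^m\tau')$ and then identify which harmonics survive after applying the projection~$p$. First I would recall from Lemma~\ref{2.3} and the discussion after Lemma~\ref{2.5} that every $u\in C^\infty(S^m\tau')$ decomposes as $u=\sum_{k=0}^{[m/2]}i^k u_{m-2k}$ with $ju_{m-2k}=0$, that $pu=u_m$ is exactly the top summand (the trace-free part), and that $\lambda u=\sum_{k=0}^{[m/2]}\lambda u_{m-2k}$ is the spherical-harmonic expansion of $\lambda u$. In the two-dimensional case the spherical harmonics of degree~$j$ on the circle $\Omega_x$ are spanned by $\cos j\theta$ and $\sin j\theta$, so $\lambda u_{j}$ is a combination of $\cos j\theta,\sin j\theta$ only. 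Thus the key observation is that the decomposition \eqref{(2.27)} corresponds termwise to the grouping of the Fourier series by frequency.

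The main work is to pin down precisely which frequencies appear. By \eqref{(2.16)}, $\lambda i=\lambda$, so $\lambda(i^k u_{m-2k})=\lambda u_{m-2k}$, which is a combination of $\cos(m-2k)\theta$ and $\sin(m-2k)\theta$. Hence as $k$ ranges over $0,\dots,[m/2]$ the summands of $\lambda u$ live in frequencies $m,m-2,m-4,\dots$, i.e. frequencies of the same parity as~$m$ and at most~$m$. In particular the top frequency~$m$ is contributed solely by $\lambda u_m=\lambda(pu)$. Matching this against the given Fourier expansion $\frac12 a_0+\sum_{k=1}^m(a_k\cos k\theta+b_k\sin k\theta)$, I would argue that the coefficients $a_k,b_k$ vanish unless $k\equiv m\pmod 2$, and that the frequency-$m$ part of $\lambda u$ equals $a_m\cos m\theta+b_m\sin m\theta$. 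Since $\lambda pu=\lambda u_m$ is precisely that frequency-$m$ part, this yields $(\lambda pu)(x,y,\theta)=a_m\cos m\theta+b_m\sin m\theta$, which is the claim.

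The step I expect to require the most care is justifying that $\lambda u_m$ is genuinely the pure frequency-$m$ component and carries no lower harmonics: one must invoke that $\lambda$ maps $\Ker j_m$ isometrically (up to scale) onto the space of degree-$m$ spherical harmonics (the statement following Lemma~\ref{2.4}), so that $ju_m=0$ forces $\lambda u_m$ to be a genuine degree-$m$ harmonic, namely a pure $\cos m\theta,\sin m\theta$ combination with no admixture of lower frequencies. Combined with the uniqueness of the Fourier/spherical-harmonic expansion from Lemma~\ref{2.5}, this identifies $\lambda u_m$ with the single top-frequency term of the given series. The remaining verification—that the lower summands $\lambda u_{m-2k}$ for $k\ge 1$ contribute only to frequencies strictly below~$m$—is immediate from $\lambda i=\lambda$ and the degree count, so no substantial computation is needed. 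I would therefore present the argument as: decompose by Lemma~\ref{2.3}, apply $\lambda$ and use $\lambda i=\lambda$ to read off frequencies, and isolate the top harmonic as $\lambda pu$.
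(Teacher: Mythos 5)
Your proposal is correct and is essentially the paper's own argument: the paper dismisses Lemma~\ref{9.1} as a particular case of Lemma~\ref{2.5} together with the remark on expansion~\eqref{(2.27)}, which is exactly the reduction you carry out (decompose $u=\sum_k i^ku_{m-2k}$, use $\lambda i=\lambda$ and the identification of $\Ker j_{m-2k}$ with degree-$(m-2k)$ harmonics, i.e. pure $\cos(m-2k)\theta,\sin(m-2k)\theta$ combinations on the circle, then read off $\lambda pu=\lambda u_m$ as the top-frequency term by uniqueness of the Fourier expansion). You merely make explicit the details the paper leaves to the reader, including the correct observation that $pu=u_m$ is the $k=0$ summand of~\eqref{(2.12)}.
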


 Assume~$u$ and~$v$ to satisfy the hypotheses of Theorem~1.1.
 The assumption $ ju=0 $ is equivalent to $ pu=u $.  Then, according to
 Lemma~\ref{9.1}, there exist functions $ a,b\in C^\infty(M) $
 such that
  \begin{equation}
 (\lambda u)(x,y,\theta)= a(x,y)\cos m\theta+b(x,y)\sin m\theta.
 \label{(9.2)}
 \end{equation}
 The condition $ du=iv $ is equivalent to $ pdu=0 $.
 According to Lemma~\ref{9.1}, this means that the
 Fourier
 series
 of the function~$ \lambda du $ does not contain the harmonics
 of order $ m+\nobreak 1$.
 Due
 to Lemma~\ref{8.1}, we have $ \lambda du=H\lambda u $.
 Hence the coefficients at $ \cos(m+1)\theta $ and
 $\sin(m+1)\theta$ in the Fourier
 series of the function~$H\lambda u$ are equal to zero identically in~$(x,y)$.

 The operator~$H$ has the following form in the coordinates~$(x,y,\theta)$:
  \begin{equation}
 H=e^{-\mu}
 \bigg(
 \cos\theta\frac{\partial}{\partial x}+
 \sin\theta\frac{\partial}{\partial y}+(-\mu_x\sin\theta+\mu_y\cos\theta)
          \frac{\partial}{\partial\theta}
 \bigg).
 \label{(9.3)}
 \end{equation}
 This can be derived from~(1.9) and \eqref{(9.1)} by a direct calculation that is omitted.

 Now, we express $ H\lambda u $ in terms of~$a$ and~$b$ using
 \eqref{(9.2)} and~\eqref{(9.3)}, and then expand $ H\lambda u $ in the Fourier series in~$\theta$.
 Equating to zero the coefficients of the series at $\cos(m+1)\theta$ and $\sin(m+1)\theta$, we arrive to the following equations:
 \begin{equation}
 \begin{array}{l}
 a_x-b_y-m(\mu_xa-\mu_yb)=0,\\
 [0.3cm]
 a_y+b_x-m(\mu_ya+\mu_xb)=0.
 \end{array}
 \label{(9.4)}
 \end{equation}

 Introducing the notation
 $$
 z=x+iy,\quad w=a+ib,\quad
 \frac{\partial}{\partial\bar z}= \frac12
 \bigg(\frac{\partial}{\partial x}+i
      \frac{\partial}{\partial y}
 \bigg),
 $$
 we write~\eqref{(9.4)} in the complex form
 $$
 \frac{\partial}{\partial\bar z}(e^{-m\mu}w)=0.
 $$
 So,  $e^{-m\mu}w$ is a holomorphic function. According to the hypotheses
 of Theorem~1.1, this function vanishes together with all its derivatives
 at some point. Hence it is identically zero.
 Now,~\eqref{(9.2)} shows that $ \lambda u\equiv 0$. Hence,
 $u\equiv 0$ and Theorem~1.1. is proved.

\section*{Acknowledgments}

 The authors are indebted to R.~Graham and M.~Eastwood for
 discussions, and to W.~Lionheart who has done
 a series of
 comments to the manuscript of the paper.

\end{document}